\theoremstyle{thmstyleone}%
\newtheorem{theorem}{Theorem}[section]
\newtheorem{corollary}[theorem]{Corollary}
\newtheorem{proposition}[theorem]{Proposition}
\newtheorem{lemma}[theorem]{Lemma}
\newtheorem{remark}[theorem]{Remark}
\newtheorem{remarks}[theorem]{Remarks}
\theoremstyle{thmstyletwo}%
\theoremstyle{thmstylethree}%
\begin{document}

\title[Elastic flow of planar curves inside cones]{Length-constrained, length-penalised and free elastic flows of planar curves inside cones}


\author[1,2]{\fnm{Mashniah A.} \sur{Gazwani}}\email{magazwani@iau.edu.sa}
\author[2]{\fnm{James A.} \sur{McCoy}}\email{James.McCoy@newcastle.edu.au}

\affil[1]{\orgdiv{Mathematics}, \orgname{Imam Abdulrahman Bin Faisal University}, \orgaddress{\street{Prince Naif Road}, \city{Dammam}, \postcode{32256}, \state{Eastern Province}, \country{Saudi Arabia}}}

\affil[2]{\orgdiv{School of Information and Physical Sciences}, \orgname{The University of Newcastle}, \orgaddress{\street{University Drive}, \city{Callaghan}, \postcode{2308}, \state{New South Wales}, \country{Australia}}}


\abstract{We study families of smooth, embedded, regular planar curves
 $ \alpha : \left [-1,1  \right ]\times \left [0,T  \right )\to \mathbb{R}^{2}$ with generalised Neumann boundary conditions inside cones, satisfying three variants of the fourth-order nonlinear $L^2$- gradient flow for the elastic energy: (1) elastic flow with a length penalisation, (2) elastic flow with fixed length and (3) the unconstrained or `free' elastic flow.  Assuming neither end of the evolving curve reaches the cone tip, existence of smooth solutions for all time given quite general initial data is well known, but classification of limiting shapes is generally not known.  For cone angles not too large and with suitable smallness conditions on the $L^2$-norm of the first arc length derivative of curvature of the initial curve, we prove in cases (1) and (2) smooth exponential convergence of solutions in the $C^\infty$-topology to particular circular arcs, while in case (3), we show smooth convergence to an expanding self-similar arc. }

\keywords{ Elastic energy, elastic flow,  fourth-order geometric evolution equation, Neumann boundary condition}



\maketitle

\section{Introduction}\label{sec1}

The elastic energy has been extensively studied since the Bernoulli model of an elastic rod.  We refer the reader to \cite{T83} for an interesting historical account with reference to impacts of elasticity on mathematical analysis.  Critical points of the elastic energy are called \emph{elasticae}.  More recently, the corresponding $L^2$-gradient flow for the elastic energy has generated much interest, along with related fourth-order curvature flows.  For a review, we refer the reader to the introduction of \cite{WW}.  The foci of that article were the curve diffusion flow and the elastic flow for planar curves between parallel supporting lines with generalised Neumann boundary conditions; given the fourth-order nature of these flows, it is natural to supplement the classical Neumann boundary condition with a `no-curvature-flux' condition on the boundary, although other boundary conditions are also possible (see, for examples, \cite{lin2012l2, DP14, DPS16, DLP17, DD24}).  Recently the present authors considered planar curves with the same boundary conditions evolving by the curve diffusion flow inside cones \cite{GM24}.  There it was showed that initial curves suitably far from the tip of the cone and with sufficiently small oscillation of curvature give rise to solutions of the curve diffusion flow that exist for all time and converge to circular arcs that, together with the cone boundary, enclose the same area as the initial curve.  It is natural to consider the elastic flow in the same setting.  In contrast to the curve diffusion flow, for which circular arcs are stationary solutions, for the unconstrained elastic flow it is easy to see that circular arcs move outwards self-similarly.  Thus one might suspect more generally solutions might approach these self-similar solutions.  We provide sufficient conditions on the cone angle and initial curve that guarantee this is indeed the case.  We also consider two ways in which the elastic flow can be modified to control the length of the evolving curve, similarly as was done in \cite{DKS02} in the case of closed curves.  In these cases, with sufficiently small cone angle and small initial oscillation of curvature we obtain long time convergence to a stationary limit curve which is a circular arc.  It is interesting to note that while in the case of curve diffusion, the relevant smallness quantity was the oscillation of curvature, whereas here it is the $L^2$-norm of the first derivative of curvature with respect to arc length.

The structure of this article is as follows.  In Section \ref{S:prelim} we set up notation, fully describe our setting and provide some known results that are needed in the subsequent analysis. In Sections \ref{S:lp}, \ref{S:lf} and \ref{S:f}, we consider elastic flows respectively with (1) a length penalty term ($\lambda>0$), (2) a length constraint ($\lambda = \lambda(t)$ for a specifically chosen function $\lambda(t)$)  and (3) no constraint ($\lambda\equiv 0$, the so-called free elastic flow).

The authors would like to thank Glen Wheeler for useful discussions.
\section{Preliminaries} \label{S:prelim}
 \begin{figure}[H] 
  \centering
  \tikzset{every picture/.style={line width=0.75pt}}
\begin{tikzpicture}[x=0.75pt,y=0.75pt,yscale=-1,xscale=1]
\draw    (54.33,64) -- (200,213) ;
\draw    (334.33,52) -- (200,213) ;
\draw [color={rgb, 255:red, 110; green, 168; blue, 236 }  ,draw opacity=1 ]   (127.17,138.5) .. controls (174.33,108) and (164.33,19) .. (267.17,132.5) ;
\draw [color={rgb, 255:red, 208; green, 2; blue, 27 }  ,draw opacity=1 ][fill={rgb, 255:red, 12; green, 29; blue, 250 }  ,fill opacity=1 ]   (267.17,132.5) -- (294.95,161.55) ;
\draw [shift={(296.33,163)}, rotate = 226.28] [color={rgb, 255:red, 208; green, 2; blue, 27 }  ,draw opacity=1 ][line width=0.75]    (10.93,-3.29) .. controls (6.95,-1.4) and (3.31,-0.3) .. (0,0) .. controls (3.31,0.3) and (6.95,1.4) .. (10.93,3.29)   ;
\draw [color={rgb, 255:red, 208; green, 2; blue, 27 }  ,draw opacity=1 ]   (267.17,132.5) -- (294.07,99.55) ;
\draw [shift={(295.33,98)}, rotate = 129.23] [color={rgb, 255:red, 208; green, 2; blue, 27 }  ,draw opacity=1 ][line width=0.75]    (10.93,-3.29) .. controls (6.95,-1.4) and (3.31,-0.3) .. (0,0) .. controls (3.31,0.3) and (6.95,1.4) .. (10.93,3.29)   ;
\draw    (252.33,133) -- (260.33,140) ;
\draw    (252.33,133) -- (260.33,125) ;
\draw    (136.33,147) -- (144.33,140) ;
\draw    (136.33,131) -- (144.33,140) ;
\draw [color={rgb, 255:red, 245; green, 166; blue, 35 }  ,draw opacity=1 ]   (171,183) .. controls (211,153) and (248.33,199) .. (254.33,212) ;
\draw [color={rgb, 255:red, 126; green, 211; blue, 33 }  ,draw opacity=1 ]   (211.33,200) .. controls (219.33,202) and (223.33,203) .. (226.33,213) ;
\draw [color={rgb, 255:red, 208; green, 2; blue, 27 }  ,draw opacity=1 ]   (127.17,138.5) -- (98.85,162.7) ;
\draw [shift={(97.33,164)}, rotate = 319.48] [color={rgb, 255:red, 208; green, 2; blue, 27 }  ,draw opacity=1 ][line width=0.75]    (10.93,-3.29) .. controls (6.95,-1.4) and (3.31,-0.3) .. (0,0) .. controls (3.31,0.3) and (6.95,1.4) .. (10.93,3.29)   ;
\draw [color={rgb, 255:red, 208; green, 2; blue, 27 }  ,draw opacity=1 ]   (127.17,138.5) -- (95.71,105.45) ;
\draw [shift={(94.33,104)}, rotate = 46.42] [color={rgb, 255:red, 208; green, 2; blue, 27 }  ,draw opacity=1 ][line width=0.75]    (10.93,-3.29) .. controls (6.95,-1.4) and (3.31,-0.3) .. (0,0) .. controls (3.31,0.3) and (6.95,1.4) .. (10.93,3.29)   ;

\draw (340,30.4) node [anchor=north west][inner sep=0.75pt]    {$\overline{\gamma }_{2}$};
\draw (301,143.4) node [anchor=north west][inner sep=0.75pt]  [color={rgb, 255:red, 208; green, 2; blue, 27 }  ,opacity=1 ]  {$e_{+}$};
\draw (297,96.4) node [anchor=north west][inner sep=0.75pt]  [color={rgb, 255:red, 208; green, 2; blue, 27 }  ,opacity=1 ]  {$\nu $};
\draw (191,48.4) node [anchor=north west][inner sep=0.75pt]  [font=\Large,color={rgb, 255:red, 74; green, 146; blue, 226 }  ,opacity=1 ]  {$\alpha $};
\draw (38,206.4) node [anchor=north west][inner sep=0.75pt]    {$----------------------$};
\draw (189,151.4) node [anchor=north west][inner sep=0.75pt]    {$\textcolor[rgb]{0.96,0.65,0.14}{\theta _{1}{}}$};
\draw (226.07,192.33) node [anchor=north west][inner sep=0.75pt]  [rotate=-0.45]  {$\textcolor[rgb]{0.49,0.83,0.13}{\theta _{2}}$};
\draw (42,35.4) node [anchor=north west][inner sep=0.75pt]    {$\overline{\gamma }_{1}$};
\draw (79,103.4) node [anchor=north west][inner sep=0.75pt]  [color={rgb, 255:red, 208; green, 2; blue, 27 }  ,opacity=1 ]  {$\nu $};
\draw (76.52,142.91) node [anchor=north west][inner sep=0.75pt]  [color={rgb, 255:red, 26; green, 100; blue, 187 }  ,opacity=1 ,rotate=-356.68]  {$\textcolor[rgb]{0.82,0.01,0.11}{e_{-}}$};

\end{tikzpicture}
\caption[The set-up.]
 \label{fig1}
\end{figure}

Given an initial smooth, immersed, regular open planar curve with boundary $\alpha_0: \left[ -1, 1\right] \times \left[0, T\right) \rightarrow \mathbb{R}^2$, we consider the family of curves $\alpha\left( \cdot, t\right)$ moving with normal velocity $F$: \begin{equation}\label{f}
    \frac{\partial }{\partial t}\alpha  =-\,F[k]\,\nu,
\end{equation}
where $F[k]$ indicates the normal speed of the curve and $\alpha( \cdot, 0) = \alpha_0$.
Above $\nu$ is the outer unit normal vector field to the curve $\alpha$, and $k=\left<\kappa, \nu  \right>=-\left<  \alpha_{ss}, \nu \right >$  is the scalar curvature of $\alpha$, where $\left<  \cdot, \cdot \right >$ is the usual Euclidean inner product in $\mathbb{R}^2$ and $s$ is the arc-length parameter of curve $\alpha$, defined via
\begin{equation} \label{s}
  s\left( u, t\right) = \int_{-1}^{u} \left| \alpha_u \left( \tilde u, t\right) \right| d\tilde u \mbox{.}
\end{equation}
The sign in \eqref{f} is chosen such that the equation is parabolic in the generalised sense.  Let us further denote by $\tau =\displaystyle{\frac{\alpha_{u} }{\left|\alpha _{u} \right|}}=\alpha _{s}$ the unit tangent vector field along $\alpha$.  

The three speeds $F$ of interest in this article are
\begin{enumerate}
    \item[\textnormal{(1)}] $F= -k_{ss} - \frac{1}{2} k^3 + \lambda\, k$ for any constant $\lambda>0$ (the `length-penalised elastic flow');
    \item[\textnormal{(2)}] $F= -k_{ss}  - \frac{1}{2} k^3 +\lambda(t) k$ for a specific function $\lambda(t)$ chosen such that the length of the evolving curve is fixed (a `length-constrained elastic flow');
    \item[\textnormal{(3)}] $F= - k_{ss}  - \frac{1}{2} k^3$ the so-called `free elastic flow'.
\end{enumerate}
These speeds were considered in the case of closed curves in \cite{DKS02}, with longtime existence and subconvergence to elastica considered in cases (1) and (2).  Very recently Miura and Wheeler \cite{TW24} considered case (3) for closed curves, introducing a new monotone quantity that facilitates exponential convergence to expanding circles for initial curves that are sufficiently geometrically close to circles.  It turns out that this quantity may also be used to prove the analogous result here: convergence to expanding circular arcs.

The boundary conditions for our evolving planar curve comprise the classical Neumann condition and a no-curvature-flux condition.  Fix $0\leq \theta_2 < \theta_1 < 2\pi$ and let $\gamma_{i} :\left [ 0,\infty  \right )\to \mathbb{R}^{2}~ (i=1,2)$, such that $ \gamma_{i}\left ( \rho  \right )=\rho \, \theta _{i},$ and denote the image sets by 
$$\bar{\gamma_{i}} :\left\{\left ( x,y \right )\in \mathbb{R}^{2}: \left ( x,y \right )=\left ( \rho \cos\theta _{i}, \rho\sin \theta _{i}\right ), \rho > 0 \right\}.$$
The set $\bar{\gamma_{1}} \cup \bar{\gamma_{2}}$ forms the boundary for our evolving curve, namely a cone in $\mathbb{R}^2$ with the tip at the origin and cone angle $\theta_1 - \theta_2 < 2\pi$.  We remark that in our recent work \cite{GM24} on the curve diffusion flow, we required the cone angle to be strictly less than $\pi$ in order to have a preserved smallness condition on the oscillation of curvature under the flow, facilitating existence for all time of a solution and convergence to a circular arc.  There is no such requirement here to have long time existence; this will be true for any cone angle up to $2\pi$ and follows by very similar arguments as in the case of closed curves \cite{DKS02}.  However, the classification of asymptotic behaviour does require a smallness condition on the cone angle and associated smallness condition on the oscillation of curvature.

The interior of our initial curve $\alpha_0: \left[ -1, 1\right] \rightarrow \mathbb{R}^2$ is assumed to be contained in the region 
$$\left\{ \left( x, y\right) \in \mathbb{R}^2: \left( x, y\right) = \left( \rho \cos \theta, \rho \sin \theta\right), \rho >0, \theta_2 < \theta < \theta_1 \right\} \mbox{,}$$
 that is, `inside' the cone and such that $\alpha_{0} \left ( -1 \right )\in \bar{\gamma _{1}}$ and $ \alpha_{0} \left ( 1 \right )\in \bar{\gamma _{2}}$.  Moreover, $\alpha_0$ is assumed to meet the boundary at each endpoint perpendicularly (the classical Neumann boundary condition) and be such that the curvature $k_0$ of $\alpha_0$, computed using one-sided derivatives, satisfies
$$\left( k_0\right)_s\left( -1\right) = \left( k_0\right)_s\left( 1 \right) = 0 \mbox{,}$$
the \emph{no-curvature-flux condition} on the boundary.  We do not include the cone tip itself in the specification of the boundary condition, as the normal to the boundary there is not well defined.  We now let the curve evolve under \eqref{f} with \emph{generalised Neumann boundary conditions} as long as this is a well-posed problem.  This means that under the evolution, the classical Neumann boundary condition and no curvature flux condition continue to hold.  At least for a short time, by continuity of solutions, the solution will not immediately jump to the cone tip.  It is indeed a key feature of our analysis that we have to avoid the ends of the evolving curve reaching the cone tip, unless they do so in a limiting sense towards the maximal time $T$ of existence of the solution.

Denoting by $e_-$ and $e_+$ unit vectors perpendicular to $\bar{\gamma _{1}}$ and $\bar{\gamma _{2}}$, 
the boundary conditions ensure that the ends of the evolving curve continue to meet either side of the cone that holds as long as the solution to \eqref{f} exists and
 \begin{equation} \label{E:NBC1}
   \left< \nu\left( -1, t\right), e_- \right> = \left< \nu\left( 1, t\right), e_+ \right> = 0
   \end{equation}
   and
   \begin{equation} \label{E:NBC2}
   k_s\left( \pm 1, t\right) = 0 \mbox{.}
   \end{equation}
   By slight abuse of notation, we will denote quantities associated with the evolving curve using the same symbols, be they functions of $u$ and $t$, or $s$ and $t$, as is customary, and should not lead to any confusion.  When we evaluate at endpoints we will denote the spatial argument using $\pm 1$ but spatial derivatives will be respect to $s$ and interpreted in the appropriate one-sided sense.  We denote by $k_{s^{n}}$ the $n$-th iterated derivative of $k$ with respect to arc-length and write $k_{s^{n}}^{2}$ for $\left ( k_{s^{n}} \right )^{2}$.

 Throughout the article, we use $L$ to denote the length of $\alpha\left( \cdot, t\right)$ and write
\begin{equation}\label{eql}
    L\left( t\right) = L[\alpha\left( \cdot, t\right) ]=\int_{\alpha }\, ds.
\end{equation}
All integrals will be over the curve $\alpha$ unless otherwise indicated.  We also have the area $A$ of the region bounded by the curve and cone
\begin{equation} \label{E:area}
  A\left( t\right) = A\left[\alpha\left( \cdot, t\right) \right] = +\frac{1}{2} \int_{\alpha } \left< \alpha, \nu \right> ds \mbox{,}
\end{equation}
where the sign is $+$ due to our choice of outer unit normal.

The average curvature of $\alpha\left( \cdot, t\right)$  is defined as\begin{equation}\label{eqkk}
 \bar{k}[\alpha]=\displaystyle{\frac{1}{L}}\int_{\alpha } k\,ds.   
\end{equation}
Additionally, 
the rotation number of $\alpha\left( \cdot, t\right)$ is
\begin{equation} \label{E:omega}
  \omega \left [ \alpha  \right ] :=\frac{1}{2\pi }\int_{\alpha } k\,ds.
  \end{equation}

This may be thought of as the general definition of rotation number
for any curve for which the integral makes sense.  As the rotation number gives the net turning of the tangent vector to $\alpha$ we observe that in our setting 
$$\omega = \frac{\theta_1 - \theta_2}{2\pi} \mbox{.}$$

To prove our main theorems, we require some well-known fundamental tools.  First are the standard Poincaré-Sobolev-Wirtinger [PSW] inequalities for curves with boundaries.\\

\begin{proposition} \label{psw}
Let $L>0$. Let $g:[0,L]\rightarrow \mathbb{R}$ be an absolutely continuous function with $\int_{0}^{L}g(x) \,dx=0.$ Then 
\begin{equation}\label{eqpsw}
    \int_{0}^{L}g^{2}(x)\, dx\leq \frac{L^{2}}{\pi ^{2}}\int_{0}^{L}\left( g_{x} \right)^{2}(x)\, dx.
\end{equation}
Similarly, if $g:[0,L]\rightarrow \mathbb{R}$ is absolutely continuous and $g(0)=g(L)=0$ then 
\begin{equation*}
\int_{0}^{L}g^{2}(x) \,dx\leq \frac{L^{2}}{\pi ^{2}}\int_{0}^{L} \left( g_{x} \right)^{2}(x)\,dx.
\end{equation*}
\end{proposition}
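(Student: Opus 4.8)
The plan is to prove both inequalities by half-range Fourier expansion, using the cosine system for the zero-average case and the sine system for the Dirichlet case. As a preliminary reduction, I would assume $g_{x}\in L^{2}[0,L]$, since otherwise the right-hand side is infinite and the inequality is trivial; thus $g\in W^{1,2}(0,L)$ and all integrations by parts below are licensed by the absolute continuity of $g$.

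For \eqref{eqpsw}, expand $g$ in its Fourier cosine series, convergent in $L^{2}(0,L)$; the hypothesis $\int_{0}^{L}g\,dx=0$ removes the constant mode, so $g(x)=\sum_{n\ge 1}a_{n}\cos\!\left(\frac{n\pi x}{L}\right)$. Since $g_{x}\in L^{2}(0,L)$ admits a sine expansion $g_{x}(x)=\sum_{n\ge 1}b_{n}\sin\!\left(\frac{n\pi x}{L}\right)$ and the sine functions vanish at $0$ and $L$, integration by parts in the coefficient formula gives $b_{n}=-\frac{n\pi}{L}a_{n}$ for every $n\ge 1$. Parseval's identity then yields $\int_{0}^{L}g^{2}\,dx=\frac{L}{2}\sum_{n\ge 1}a_{n}^{2}$ and
\[
\int_{0}^{L}g_{x}^{2}\,dx=\frac{L}{2}\sum_{n\ge 1}b_{n}^{2}=\frac{L}{2}\sum_{n\ge 1}\frac{n^{2}\pi^{2}}{L^{2}}a_{n}^{2}\ge\frac{\pi^{2}}{L^{2}}\cdot\frac{L}{2}\sum_{n\ge 1}a_{n}^{2}=\frac{\pi^{2}}{L^{2}}\int_{0}^{L}g^{2}\,dx,
\]
which is the claim. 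For the second inequality one argues symmetrically: expand $g$ in the sine system, observe that the constant Fourier mode of $g_{x}$ vanishes because $\int_{0}^{L}g_{x}\,dx=g(L)-g(0)=0$ so that $g_{x}$ has a pure cosine expansion, use $g(0)=g(L)=0$ to kill the boundary terms when relating the coefficients by integration by parts, and conclude again via Parseval together with $n^{2}\ge 1$.

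I expect the only genuinely delicate points to be the completeness of the half-range cosine and sine systems in $L^{2}(0,L)$ and the justification of the coefficient relation $b_{n}=-\frac{n\pi}{L}a_{n}$, both of which are classical consequences of the absolute continuity of $g$; the remaining bookkeeping is routine. A cleaner alternative I would also mention is to reduce to the standard periodic Wirtinger inequality on $[-L,L]$ by extending $g$ evenly (respectively oddly) across $x=0$: this produces a $2L$-periodic absolutely continuous function of vanishing mean, to which the periodic inequality with sharp constant $(2L/2\pi)^{2}=L^{2}/\pi^{2}$ applies, and halving both sides recovers the stated estimates. The eigenvalue comparison $n^{2}\pi^{2}/L^{2}\ge \pi^{2}/L^{2}$ is where sharpness enters, with equality precisely for the single modes $g(x)=a_{1}\cos(\pi x/L)$ and $g(x)=c_{1}\sin(\pi x/L)$ respectively.
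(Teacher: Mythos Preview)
Your argument is correct and is one of the standard proofs of the sharp Wirtinger inequalities on an interval. Note, however, that the paper does not actually prove Proposition~\ref{psw}: it is stated as a well-known preliminary tool (``the standard Poincar\'e--Sobolev--Wirtinger inequalities for curves with boundaries'') and used without proof. So there is nothing to compare against; your half-range Fourier argument, together with the even/odd extension alternative you mention, is exactly the classical justification the paper is implicitly invoking.
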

\begin{proposition} \label{eq266}
If $g:[0,L]\rightarrow \mathbb{R}$ is absolutely continuous and $g(0)=g(L)=0$ then 
\begin{equation}\label{pp}
\left \| g \right \|_{\infty }^{2}\leq \frac{L}{\pi }\int_{0}^{L}\left( g_{x} \right)^2 (x)\, dx.
\end{equation}
Similarly, if  $g:[0,L]\rightarrow \mathbb{R}$ is absolutely continuous and $\int_{0}^{L}g \,dx=0$ then
\begin{equation}\label{eqppsw}
\left \| g \right \|_{\infty }^{2}\leq \frac{2L}{\pi }\int_{0}^{L}\left( g_{x} \right)^2 (x)\, dx.
\end{equation}
\end{proposition}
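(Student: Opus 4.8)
The plan is to obtain both estimates from an Agmon-type interpolation inequality of the form $\|g\|_\infty^2\le C\,\|g\|_{L^2}\|g_x\|_{L^2}$, and then to eliminate the $\|g\|_{L^2}$ factor by invoking the appropriate Wirtinger inequality already recorded in Proposition \ref{psw}. In both cases one may assume $g_x\in L^2(0,L)$, since otherwise the right-hand side is infinite and nothing is to be proved; then $g_x\in L^1(0,L)$, $g$ is continuous (being absolutely continuous on $[0,L]$), $g^2$ is absolutely continuous with $(g^2)_x=2gg_x$ almost everywhere, and $|g|$ attains its maximum $M:=\|g\|_\infty$ at some $x_0\in[0,L]$.

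For the first estimate, with $g(0)=g(L)=0$, I would argue as follows. The case $M=0$ is trivial, so $x_0\in(0,L)$. Writing $M^2=g(x_0)^2$ as $\int_0^{x_0}(g^2)_x\,dx$ and as $-\int_{x_0}^{L}(g^2)_x\,dx$ and applying Cauchy--Schwarz to each half yields
\begin{align*}
 M^2 &\le 2\Bigl(\int_0^{x_0}g^2\,dx\Bigr)^{1/2}\Bigl(\int_0^{x_0}g_x^2\,dx\Bigr)^{1/2},\\
 M^2 &\le 2\Bigl(\int_{x_0}^{L}g^2\,dx\Bigr)^{1/2}\Bigl(\int_{x_0}^{L}g_x^2\,dx\Bigr)^{1/2}.
\end{align*}
Adding these and using $\sqrt a\,\sqrt c+\sqrt b\,\sqrt d\le\sqrt{(a+b)(c+d)}$ (Cauchy--Schwarz in $\mathbb{R}^2$) collapses the two contributions into $M^2\le\|g\|_{L^2}\|g_x\|_{L^2}$. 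Substituting the Dirichlet Wirtinger bound $\|g\|_{L^2}\le\frac{L}{\pi}\|g_x\|_{L^2}$ from Proposition \ref{psw} then gives \eqref{pp}.

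For the second estimate, with $\int_0^L g\,dx=0$, the function $g$ is not of one sign, so by continuity there is $x_1\in[0,L]$ with $g(x_1)=0$. Integrating $(g^2)_x$ over the interval between $x_1$ and $x_0$ and applying Cauchy--Schwarz once gives
\begin{equation*}
 M^2=g(x_0)^2=\Bigl|\,2\int_{x_1}^{x_0}g\,g_x\,dx\,\Bigr|\le 2\,\|g\|_{L^2}\|g_x\|_{L^2},
\end{equation*}
the constant being $2$ rather than $1$ since only one side of $x_0$ is used. The mean-zero Wirtinger bound $\|g\|_{L^2}\le\frac{L}{\pi}\|g_x\|_{L^2}$ from Proposition \ref{psw} now yields \eqref{eqppsw}.

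I do not expect a serious obstacle: the whole argument reduces to Cauchy--Schwarz together with the Wirtinger inequalities of Proposition \ref{psw}. The only points deserving a word of care are the existence of the maximiser $x_0$ and, in the mean-zero case, of the zero $x_1$ of $g$ -- both immediate from continuity -- and the preliminary reduction to $g_x\in L^2$, which guarantees the integrals converge and that $g^2$ is absolutely continuous so that the fundamental theorem of calculus applies.
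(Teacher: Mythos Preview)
Your proof is correct. The paper itself does not supply a proof of this proposition: it is listed among the ``well-known fundamental tools'' and stated without argument, so there is nothing to compare against. Your Agmon-type interpolation $\|g\|_\infty^2\le C\|g\|_{L^2}\|g_x\|_{L^2}$ combined with the Wirtinger inequalities of Proposition~\ref{psw} is exactly the standard route to these estimates, and the details (existence of the maximiser, existence of a zero in the mean-zero case, the Cauchy--Schwarz collapse of the two half-interval contributions) are all handled cleanly.
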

It is convenient as in \cite{DKS02} and subsequent work to use the following scale-invariant norms: 
 $$\left\| k \right\|_{\ell, p} := \sum_{i=0}^\ell \left\| \partial_s^i k \right\|_p$$
 where
 $$\left\| \partial_s^i k \right\|_p = L^{i+1-\frac{1}{p}} \left( \int_\alpha \left| \partial_s^i k \right|^p ds\right)^{\frac{1}{p}} \mbox{.}$$
 
 The following interpolation inequality for curves with boundary is
 \cite{DP14}[Lemma 4.3].  The analogous inequalities for closed curves appeared earlier in \cite{DKS02}.  We use the standard notation $P_n^m(k)$ to denote a linear combination of terms each of which contain $n$ factors of $k$ with a total of $m$ derivatives with respect to $s$.\\
 
\begin{proposition}\label{p}
  Let $\alpha:\left [ -1, 1 \right ]\to \mathbb{R}^{2}$  be a smooth curve with boundary. Then for any term $P_{n}^{m}\left ( k \right )$ with $n\geq 2$  that contains derivatives of $k$ of order at most $\ell-1$,
  $$\int_\alpha \left|P_{n}^{m}\left ( k \right ) \right| ds\leq c\, L^{\ell-m-n}\left\| k\right\|_{0,2}^{n-p}\left\| k\right\|_{\ell,2}^{p},$$
  where $p=\displaystyle{\frac{1}{\ell}\left ( m+\frac{1}{2} n-1\right )}$ and $c=\left ( \ell,m,n \right ) $. Furthermore, if $m+\frac{n}{2}< 2\ell+1$ then $p<2$ and for any $\varepsilon>0,$
  $$\int_\alpha \left|P_{n}^{m}\left ( k \right ) \right| ds\leq \varepsilon \int  k_{s^{\ell}}^{2}ds+c\,\varepsilon^{\frac{-p}{2-p}}\left( \int k^{2}ds  \right)^{\frac{n-p}{2-p}}+c \left(\int k^{2}ds  \right)^{m+n-1}.$$    
\end{proposition}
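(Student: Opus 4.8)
The plan is to follow the template of \cite{DKS02}, in the form adapted to curves with boundary in \cite{DP14}: reduce to a single monomial in the derivatives of $k$, split the integral with the generalised H\"older inequality, estimate each factor by a one-dimensional Gagliardo--Nirenberg inequality, and then check that the exponents assemble into $p$ and that the powers of $L$ combine into the asserted one. By linearity of $P_{n}^{m}(k)$ and the triangle inequality it is enough to bound
$$\int_\alpha \bigl|\,\partial_s^{c_1}k\,\partial_s^{c_2}k\cdots\partial_s^{c_n}k\,\bigr|\,ds$$
for an arbitrary multi-index with $c_1+\cdots+c_n=m$ and $0\le c_j\le\ell-1$.

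The engine of the argument is the single-function, scale-invariant interpolation estimate
$$\bigl\|\partial_s^{i}k\bigr\|_{q}\;\le\; c(\ell,i,q)\,\|k\|_{\ell,2}^{\,\theta}\,\|k\|_{0,2}^{\,1-\theta},\qquad \theta=\frac{i+\tfrac12-\tfrac1q}{\ell},$$
valid for $0\le i\le\ell-1$ and $2\le q\le\infty$, for which $\theta\in[0,1)$. I would obtain this from the classical Gagliardo--Nirenberg inequality on the interval $[0,L]$ --- equivalently, by iterating the Poincar\'e--Sobolev--Wirtinger inequalities of Propositions \ref{psw} and \ref{eq266} applied to $\partial_s^{i}k$ with its mean removed, the means being estimated via the fundamental theorem of calculus in terms of lower-order derivatives --- and then repackaging everything in the scale-invariant norms, so that the lower-order and boundary corrections are absorbed into $\|k\|_{0,2}$.

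Now apply the generalised H\"older inequality to the monomial above with all $n$ exponents equal to $n$; this is admissible exactly because $n\ge2$ (so that $n,\dots,n$ are conjugate and each is $\ge2$), yielding $\int_\alpha\prod_j|\partial_s^{c_j}k|\,ds\le\prod_j\bigl(\int_\alpha|\partial_s^{c_j}k|^{n}\,ds\bigr)^{1/n}$. Converting each raw $L^{n}$ integral into the scale-invariant norm $\|\partial_s^{c_j}k\|_{n}$ contributes a fixed power of $L$, and then the single-function estimate with $q=n$ applied to each factor produces exponents $\theta_j=\tfrac1\ell(c_j+\tfrac12-\tfrac1n)$ on $\|k\|_{\ell,2}$, with complementary exponents $1-\theta_j$ on $\|k\|_{0,2}$. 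Since $\sum_j\theta_j=\tfrac1\ell(m+\tfrac n2-1)=p$ and $\sum_j(1-\theta_j)=n-p$, and since $p\ge0$ and each $\theta_j\in[0,1]$ by $n\ge2$ and $c_j\le\ell-1$, one arrives at the first inequality; summing over the finitely many monomials in $P_{n}^{m}(k)$ gives $c=c(\ell,m,n)$.

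For the second inequality, note that $m+\tfrac n2<2\ell+1$ is precisely $p<2$, so the conjugate exponents $\tfrac2p$ and $\tfrac2{2-p}$ are available. Interpolating the intermediate scale-invariant norms into the two endpoints gives $\|k\|_{\ell,2}\le c(\|\partial_s^{\ell}k\|_{2}+\|k\|_{0,2})$, hence $\|k\|_{\ell,2}^{p}\le c\|\partial_s^{\ell}k\|_{2}^{p}+c\|k\|_{0,2}^{p}$; inserting this into the first inequality and applying Young's inequality with the pair $(\tfrac2p,\tfrac2{2-p})$ to the term carrying $\|\partial_s^{\ell}k\|_{2}^{p}$ extracts $\varepsilon\int k_{s^{\ell}}^{2}\,ds$ and leaves constants times powers of $\int k^{2}\,ds$; re-expressing $\|\partial_s^{\ell}k\|_{2}$ through $\int k_{s^{\ell}}^{2}\,ds$ and $\|k\|_{0,2}$ through $\int k^{2}\,ds$ generates further powers of $L$, which are absorbed into the constants under the bound on $L$ that holds whenever this estimate is applied, and bookkeeping yields the displayed form. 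The step I expect to be the main obstacle is the single-function estimate together with this exponent arithmetic: one must run the one-dimensional interpolation for a curve whose curvature is not assumed to vanish or be periodic at the endpoints, so that every boundary correction is genuinely controlled and disappears after passing to the scale-invariant norms, and then confirm that the $\theta_j$ sum exactly to $p$ and that the accumulated powers of $L$ combine as asserted; the rest is routine use of H\"older's and Young's inequalities.
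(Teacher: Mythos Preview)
The paper does not give its own proof of this proposition: it simply records the result and cites \cite{DP14}, Lemma 4.3 (which in turn adapts the closed-curve argument of \cite{DKS02}). Your proof sketch is correct and is precisely the standard argument carried out in those references --- reduce to a monomial, apply H\"older with all exponents equal to $n$, feed each factor into the one-dimensional Gagliardo--Nirenberg inequality in scale-invariant form, and sum the interpolation exponents to recover $p$; then Young's inequality with the pair $(2/p,\,2/(2-p))$ gives the $\varepsilon$-version --- so there is nothing to compare.
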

Next we record some evolution equations for general normal flow speeds $F$ and a consequence.  The evolution equations are easily derived as, for example, in \cite{WW}.\\

\begin{lemma}\label{T:evlneqns}
  Under the flow \eqref{f} we have the following evolution equations:
\begin{itemize}
\item[\textnormal{(i)}] $\displaystyle{\frac{\partial}{\partial t} \, ds =  -F\, k\, ds}$;
\item[\textnormal{(ii)}]$\displaystyle{\frac{\mathrm{d} }{\mathrm{d} t}L = - \int_{\alpha } k\, F \, ds;}$ 
\item[\textnormal{(iii)}] For each $\ell \in \mathbb{N} \cup \left\{ 0 \right\}$, we have 
$$\displaystyle{\frac{\partial}{\partial t} k_{s^\ell} = F_{s^{\ell + 2}} + \sum_{j=0}^\ell \left( k\, k_{s^{\ell-j}} F\right)_{s^j} \mbox{.}}$$
\item[\textnormal{(iv)}]$\displaystyle{\frac{\mathrm{d} }{\mathrm{d} t} \int_{\alpha } k^2 \, ds =  2 \int_{\alpha } \left( k_{ss} + \frac{1}{2} k^3 \right) F \, ds;}$ 
\item[\textnormal{(v)}]$\displaystyle{\frac{\mathrm{d} }{\mathrm{d} t} \int_{\alpha } k_{s}^2  ds = 2 \int_{\alpha } \left( - k_{s^4} - k^2 k_{ss} + \frac{1}{2} k\, k_s^2 \right) F \, ds.}$ \\
\end{itemize}  
\end{lemma}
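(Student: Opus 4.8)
The plan is to obtain all five identities from three ingredients only: the flow equation $\partial_t\alpha=-F\nu$, the Frenet relations $\tau_s=-k\nu$ and $\nu_s=k\tau$ (in the sign convention $k=-\langle\alpha_{ss},\nu\rangle$ adopted here), and the commutation rule between $\partial_t$ and $\partial_s$; the boundary enters essentially only in the last two identities. First I would prove (i). Since $ds=|\alpha_u|\,du$ and the parameter interval $[-1,1]$ is fixed, it suffices to differentiate $|\alpha_u|^2$ in $t$: using $\partial_t\alpha_u=\partial_u(-F\nu)=-F_u\nu-F\nu_u$ together with $\langle\alpha_u,\nu\rangle=0$ and $\langle\tau,\nu_s\rangle=k$ gives $\partial_t|\alpha_u|=-Fk\,|\alpha_u|$, hence $\partial_t(ds)=-Fk\,ds$. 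The same computation yields the commutator $[\partial_t,\partial_s]=Fk\,\partial_s$, which drives the remainder of the argument. Part (ii) is then immediate: $\frac{d}{dt}L=\int_\alpha\partial_t(ds)=-\int_\alpha kF\,ds$.

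For (iii) I would first record the evolution of the moving frame. From $\tau=\alpha_s$ and the commutator, $\partial_t\tau=\partial_s(-F\nu)+Fk\tau=-F_s\nu$; differentiating $|\nu|^2=1$ and $\langle\tau,\nu\rangle=0$ then gives $\partial_t\nu=F_s\tau$. Writing $k=-\langle\tau_s,\nu\rangle$ and computing $\partial_t\tau_s=\partial_s(\partial_t\tau)+Fk\,\tau_s$ produces $\partial_t k=F_{ss}+k^2F$, which is (iii) with $\ell=0$. The general case follows by induction on $\ell$: applying $[\partial_t,\partial_s]=Fk\,\partial_s$ to $k_{s^\ell}$ gives $\partial_t k_{s^{\ell+1}}=\partial_s(\partial_t k_{s^\ell})+Fk\,k_{s^{\ell+1}}$, and substituting the inductive formula for $\partial_t k_{s^\ell}$ and reindexing the sum yields exactly the asserted expression at level $\ell+1$.

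Identities (iv) and (v) then combine the above with integration by parts. For (iv), $\frac{d}{dt}\int_\alpha k^2\,ds=\int_\alpha\bigl(2k\,\partial_t k+k^2\,\partial_t(ds)\bigr)=\int_\alpha\bigl(2kF_{ss}+k^3F\bigr)\,ds$, and integrating the first term by parts twice leaves $2\int_\alpha\bigl(k_{ss}+\frac{1}{2}k^3\bigr)F\,ds$ together with the boundary contribution $\bigl[2kF_s-2k_sF\bigr]_{\pm1}$. For (v) one uses $\partial_t k_s=F_{sss}+3kk_sF+k^2F_s$ (the $\ell=1$ case of (iii)), so that $\frac{d}{dt}\int_\alpha k_s^2\,ds=\int_\alpha\bigl(2k_sF_{sss}+5kk_s^2F+2k^2k_sF_s\bigr)\,ds$; integrating by parts three times in the first term and once in the last and collecting gives $2\int_\alpha\bigl(-k_{s^4}-k^2k_{ss}+\frac{1}{2}kk_s^2\bigr)F\,ds$ plus boundary terms in which every summand carries a factor $k_s(\pm1,t)$, $F_s(\pm1,t)$ or $k_{sss}(\pm1,t)$.

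The one point requiring care — and the only place where the present (boundary) setting differs from the closed-curve computation in \cite{DKS02,WW} — is the vanishing of these boundary terms. The no-curvature-flux condition \eqref{E:NBC2} supplies $k_s(\pm1,t)=0$. Differentiating the Neumann condition \eqref{E:NBC1} in time and using $\partial_t\nu=F_s\tau$, together with the fact that at each endpoint $\tau$ is parallel to $e_\pm$ (since $\nu\perp e_\pm$ there and $\tau\perp\nu$), forces $F_s(\pm1,t)=0$; and for the three speeds in question $F_s=-k_{sss}-\frac{3}{2}k^2k_s+\lambda k_s$, so once $k_s(\pm1,t)=0$ one also gets $k_{sss}(\pm1,t)=0$. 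These three identities annihilate every boundary term in (iv) and (v), giving the stated formulas. I expect this boundary bookkeeping to be the only subtle step; the interior algebra of the integrations by parts is entirely routine.
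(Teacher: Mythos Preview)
Your derivation is correct and follows exactly the standard route the paper points to (it gives no proof of its own, only the remark ``easily derived as, for example, in \cite{WW}''). The commutator $[\partial_t,\partial_s]=Fk\,\partial_s$, the frame evolutions $\partial_t\tau=-F_s\nu$, $\partial_t\nu=F_s\tau$, the induction for (iii), and the integrations by parts for (iv)--(v) are precisely how these identities are obtained in \cite{WW} and \cite{DKS02}; your sign bookkeeping in the convention $\tau_s=-k\nu$, $\nu_s=k\tau$ is accurate throughout.

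One remark on scope: the lemma is stated ``under the flow \eqref{f}'' for a general speed $F$, but your vanishing of the $k_{sss}F$ boundary term in (v) uses the explicit form $F_s=-k_{sss}-\tfrac{3}{2}k^2k_s+\lambda k_s$ of the three speeds at hand. That is entirely appropriate here --- the paper itself only establishes $k_{s^{2\ell-1}}(\pm1,t)=0$ in the subsequent Lemma~\ref{T:BCs} ``for each of the speed functions of interest'', and (iv)--(v) are only ever applied to those speeds --- but it is worth flagging that (v) as written is not a statement about \emph{arbitrary} $F$ without some additional hypothesis on the boundary. Your argument that $F_s(\pm1,t)=0$ follows from differentiating \eqref{E:NBC1} in time, on the other hand, is valid for any normal speed preserving the Neumann condition and is the right way to dispatch the boundary terms in (iv).
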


Similarly as in \cite{WW}, we have that all odd curvature derivatives are equal to zero on the boundary, as long as the solution to \eqref{eq11} exists for each of the speed functions of interest.  This is critical for future calculations where we `integrate by parts', that is, apply the Divergence Theorem to various functions involving curvature and its derivatives on our curve with boundary.\\

\begin{lemma}\label{T:BCs}
Under the flows \eqref{f}, for each $\ell \in \mathbb{N}$ 
  $$k_{s^{2\ell-1}}\left( \pm 1, t\right)=0 \mbox{.}$$
\end{lemma}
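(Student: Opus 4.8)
The plan is to argue by induction on $\ell$, using the evolution equations from Lemma~\ref{T:evlneqns}(iii) together with the fact that the speeds $F$ of interest are polynomial expressions in $k$ and its even-order arc-length derivatives (up to $k_{s^4}$), so that differentiating $F$ an odd number of times and evaluating on the boundary can be controlled once we know the vanishing of lower odd derivatives there. The base cases $\ell=1$ and (for the fourth-order speeds) the need to also know $k_{s^3}(\pm 1,t)=0$ are furnished by the boundary conditions \eqref{E:NBC2} and a short bootstrap: the Neumann condition \eqref{E:NBC1} is preserved under the flow, which upon differentiation in $t$ forces a compatibility relation at the boundary; combined with $k_s(\pm 1,t)=0$ this yields $k_{s^3}(\pm 1,t)=0$, exactly as in the analysis of \cite{WW}.

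First I would make precise the preservation of the generalised Neumann conditions: since $\left<\nu(\pm 1,t),e_\mp\right>=0$ holds for all $t$ in the interval of existence, differentiating in $t$ and using $\partial_t\nu = F_s\,\tau$ (a standard consequence of \eqref{f}) gives $F_s(\pm 1,t)\left<\tau,e_\mp\right>=0$; as $\tau(\pm 1,t)$ is parallel to $e_\mp$ this is automatic, so instead one differentiates the no-curvature-flux condition $k_s(\pm 1,t)=0$ in $t$ and substitutes the $\ell=1$ case of Lemma~\ref{T:evlneqns}(iii), namely $\partial_t k_s = F_{sss} + (k\,k_s F)_s + k\,k_{ss}F\cdot(\text{l.o.t.})$, evaluated at the boundary. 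Knowing $k_s(\pm 1,t)=0$ already, the surviving terms express $F_{sss}(\pm1,t)$ in terms of $k_{s^3}(\pm1,t)$ and $k_{s^5}(\pm1,t)$ etc.; pairing this with the analogous differentiation of the Neumann condition produces enough relations to conclude $k_{s^3}(\pm 1,t)=0$. This is the genuinely delicate part and I expect it to be the main obstacle, since it requires carefully tracking which boundary terms are already known to vanish and checking that the resulting linear system is non-degenerate — precisely the place where the cone geometry (as opposed to parallel lines) could in principle intervene, though here it does not because the argument is purely local at each endpoint.

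Given the base cases, the inductive step is routine: assume $k_{s^{2j-1}}(\pm1,t)=0$ for all $j\le\ell$. Differentiating the identity $k_{s^{2\ell-1}}(\pm1,t)=0$ in $t$ and invoking Lemma~\ref{T:evlneqns}(iii) with index $2\ell-1$ expresses $0=\partial_t k_{s^{2\ell-1}}$ at the boundary as $F_{s^{2\ell+1}}(\pm1,t)$ plus a sum of terms of the form $\partial_s^j(k\,k_{s^{2\ell-1-j}}F)$; every such term, when expanded, is a $P_n^m(k)$ with $n\ge 2$, and by the induction hypothesis together with the polynomial structure of $F$ one checks that each summand evaluates to zero at $\pm1$ because it contains a factor that is an odd-order derivative of order $\le 2\ell-1$. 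Likewise $F_{s^{2\ell+1}}(\pm1,t)$, being an odd derivative of a polynomial in even-order curvature derivatives, reduces to $k_{s^{2\ell+1}}(\pm1,t)$ times a nonzero coefficient plus known-vanishing terms. Solving gives $k_{s^{2\ell+1}}(\pm1,t)=0$, completing the induction. I would close by remarking that smoothness up to the boundary, which justifies all the $t$-differentiations at $\pm1$, is part of the standing short-time existence assumption, so the conclusion holds on the full interval of existence of the smooth solution.
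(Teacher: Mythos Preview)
Your overall strategy---induction via the evolution equation Lemma~\ref{T:evlneqns}(iii)---is the standard one, and the paper itself simply defers to \cite{WW} for the argument. However, two concrete steps in your proposal do not close.

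First, your reading of the time-differentiated Neumann condition is inverted. From $\partial_t\nu = F_s\,\tau$ you correctly obtain $F_s(\pm1,t)\langle\tau,e_\pm\rangle = 0$. But the Neumann condition $\langle\nu,e_\pm\rangle=0$ means $\nu$ is tangent to the cone side, hence $\tau$ is \emph{parallel} to $e_\pm$ and $\langle\tau,e_\pm\rangle=\pm1\neq 0$. So the relation is not ``automatic'': it forces $F_s(\pm1,t)=0$. Since $F_s = -k_{sss} - \tfrac{3}{2}k^2 k_s + \lambda k_s$ and $k_s(\pm1,t)=0$, this immediately yields $k_{sss}(\pm1,t)=0$. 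You cannot extract $k_{sss}=0$ from differentiating $k_s=0$ in time alone; that route produces a relation involving $k_{s^5}$ as well.

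Second, in the inductive step you differentiate the wrong identity. If you take $\partial_t$ of $k_{s^{2\ell-1}}(\pm1,t)=0$, Lemma~\ref{T:evlneqns}(iii) produces $F_{s^{2\ell+1}}$, and since $F$ contains $-k_{ss}$ this carries a $-k_{s^{2\ell+3}}$ term which is not covered by the induction hypothesis; your claim that $F_{s^{2\ell+1}}$ reduces to a nonzero multiple of $k_{s^{2\ell+1}}$ plus known-vanishing terms is therefore false. The correct move is to differentiate $k_{s^{2\ell-3}}(\pm1,t)=0$ instead. Then the leading term is $F_{s^{2\ell-1}}$, whose only contribution of order above $2\ell-1$ is exactly $-k_{s^{2\ell+1}}$; every other monomial appearing (in $F_{s^{2\ell-1}}$ and in the summation terms $(k\,k_{s^{2\ell-3-j}}F)_{s^j}$ for $0\le j\le 2\ell-3$) has odd total $s$-order and involves only factors $k_{s^m}$ with $m\le 2\ell-1$, so at least one odd-order factor vanishes by the inductive hypothesis. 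This yields $k_{s^{2\ell+1}}(\pm1,t)=0$, but requires both base cases $k_s=k_{sss}=0$ to launch the induction at $\ell=2$.
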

While a smooth solution to the flow \eqref{f} exists, the rotation number $\omega$ is constant.  We state this fact as a Corollary.  The proof is a straightforward calculation, as in \cite{GM24} for example.\\

\begin{corollary} \label{T:omega}
 Under the flow \eqref{f} with generalised Neumann boundary conditions \eqref{E:NBC1} and \eqref{E:NBC2} we have
 \begin{equation}
     \frac{\mathrm{d} }{\mathrm{d} t}\int _{\alpha }k\, ds=0.
 \end{equation}
\end{corollary}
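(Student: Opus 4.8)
The plan is to differentiate $\int_\alpha k\,ds$ directly and reduce everything to a boundary term. Using the transport identity for integrals over the evolving curve together with part (i) of Lemma~\ref{T:evlneqns} (i.e. $\partial_t(ds) = -Fk\,ds$), for any quantity $f = f(\cdot,t)$ along $\alpha$ one has $\frac{\mathrm{d}}{\mathrm{d}t}\int_\alpha f\,ds = \int_\alpha (\partial_t f)\,ds - \int_\alpha f\,F\,k\,ds$; this is legitimate because the flow \eqref{f} is purely normal. Applying this with $f = k$ and substituting part (iii) of Lemma~\ref{T:evlneqns} with $\ell = 0$, namely $\partial_t k = F_{ss} + k^2 F$, gives
\begin{equation*}
\frac{\mathrm{d}}{\mathrm{d}t}\int_\alpha k\,ds = \int_\alpha \left( F_{ss} + k^2 F\right) ds - \int_\alpha k^2 F\,ds = \int_\alpha F_{ss}\,ds,
\end{equation*}
so the two cubic-order terms cancel and only $\int_\alpha F_{ss}\,ds$ remains.

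Next I would integrate once: since $F_{ss} = \partial_s(F_s)$ is an exact arc-length derivative along $\alpha$, the fundamental theorem of calculus yields $\int_\alpha F_{ss}\,ds = F_s(1,t) - F_s(-1,t)$. It therefore suffices to check that $F_s$ vanishes at both endpoints for each of the three speeds. With $F = -k_{ss} - \tfrac12 k^3 + \lambda k$ (where $\lambda$ is a constant, a function $\lambda(t)$ of time only, or identically zero), we have $F_s = -k_{sss} - \tfrac32 k^2 k_s + \lambda k_s$. The generalised Neumann boundary condition \eqref{E:NBC2} gives $k_s(\pm 1,t) = 0$, and Lemma~\ref{T:BCs} with $\ell = 2$ gives $k_{sss}(\pm 1,t) = 0$; hence every term of $F_s$ vanishes at $u = \pm 1$, so $F_s(1,t) - F_s(-1,t) = 0$ and the claimed identity holds for as long as a smooth solution exists.

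This is essentially a short computation rather than a substantive argument; the only point needing care is the appeal to Lemma~\ref{T:BCs} to kill the third-order term $k_{sss}$ at the boundary, which itself rests on the fact that the no-curvature-flux condition \eqref{E:NBC2} is preserved under each of the flows. Geometrically the statement is exactly the assertion that the rotation number $\omega$ is a flow invariant, consistent with $\omega = (\theta_1 - \theta_2)/(2\pi)$ being fixed by the cone.
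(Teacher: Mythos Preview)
Your proof is correct and is precisely the ``straightforward calculation'' the paper alludes to: differentiate under the integral using Lemma~\ref{T:evlneqns}(i),(iii), observe the $k^2F$ terms cancel, and reduce to the boundary term $[F_s]_{\partial\alpha}$, which vanishes by \eqref{E:NBC2} and Lemma~\ref{T:BCs}. One could alternatively argue more geometrically---the Neumann condition \eqref{E:NBC1} alone pins the tangent direction at each endpoint, so the total turning $\int_\alpha k\,ds$ is fixed for \emph{any} normal flow preserving \eqref{E:NBC1}, without invoking Lemma~\ref{T:BCs}---but your analytic route is equally valid and matches the spirit of the reference.
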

\mbox{}\\

For one of the estimates in Section \ref{S:lf} it will be useful to note that the Euler Lagrange operator for the elastic energy may also be written in divergence form.  It is convenient to introduce some additional notation as in \cite{DKS02}.  For a normal field $\phi$ along curve $\alpha$, we let $\nabla_s \phi$ denote the normal component of $\partial_s \phi$, that is
$$\nabla_s \phi = \partial_s \phi - \left< \partial_s \phi, \tau\right> \tau = \partial_s \phi + \left< \phi, \kappa \right> \tau \mbox{.}$$
We then have\\

\begin{lemma}\label{T:divform}
The Euler Lagrange operator for the elastic energy satisfies
$$\nabla_s^2 \kappa + \frac{1}{2} \left| \kappa \right|^2 \kappa = \partial_s\left( \partial_s \kappa + \frac{3}{2} \left| \kappa \right|^2 \tau \right) = \partial_s \left( -k_s \nu + \frac{1}{2} k^2 \tau \right) = -\left( k_{ss} + \frac{1}{2} k^3 \right) \nu \mbox{.}$$
\end{lemma}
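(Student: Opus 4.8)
The statement is a chain of four equal expressions, and since the three on the right-hand side are local and written in the Frenet frame, the natural plan is to verify the chain by a direct computation in that frame and only afterwards to recognise the leftmost, intrinsic expression. The ingredients I would fix at the outset are the Frenet relations $\partial_s\tau = -k\,\nu$ and $\partial_s\nu = k\,\tau$ — immediate from differentiating $|\tau|^2 = |\nu|^2 = 1$ and $\langle\tau,\nu\rangle = 0$ together with $k = -\langle\alpha_{ss},\nu\rangle$ — and the identification of the curvature vector $\kappa = \alpha_{ss} = \partial_s\tau = -k\nu$, so that $|\kappa|^2 = k^2$.

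For the last equality I would simply differentiate the bracketed field and substitute: $\partial_s(-k_s\nu + \tfrac12 k^2\tau) = -k_{ss}\nu - k_s\,\partial_s\nu + k k_s\,\tau + \tfrac12 k^2\,\partial_s\tau$, and on inserting the Frenet relations the two tangential terms $-k k_s\,\tau$ and $+k k_s\,\tau$ cancel, leaving $-(k_{ss}+\tfrac12 k^3)\nu$. For the middle equality I would compute $\partial_s\kappa = \partial_s(\partial_s\tau) = \partial_s(-k\nu) = -k_s\nu - k^2\tau$ and then add $\tfrac32|\kappa|^2\tau = \tfrac32 k^2\tau$, which yields exactly $-k_s\nu + \tfrac12 k^2\tau$ and hence re-expresses the middle term as $\partial_s(-k_s\nu + \tfrac12 k^2\tau)$, closing the loop with the previous step.

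For the first equality I would use the relation between the normal connection and the ambient derivative recorded just above the statement, namely $\partial_s\phi = \nabla_s\phi - \langle\phi,\kappa\rangle\tau$ for a normal field $\phi$. Taking $\phi = \kappa$ and using $\langle\kappa,\kappa\rangle = k^2$ gives $\nabla_s\kappa = \partial_s\kappa + k^2\tau = -k_s\nu$; applying the relation once more with $\phi = \nabla_s\kappa = -k_s\nu$ and $\langle\nu,\kappa\rangle = -k$ gives $\nabla_s^2\kappa = \partial_s(-k_s\nu) + k k_s\,\tau = -k_{ss}\nu$, whence $\nabla_s^2\kappa + \tfrac12|\kappa|^2\kappa = -k_{ss}\nu - \tfrac12 k^3\nu$. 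Together with the two steps above this identifies all four expressions. (Alternatively, one can check the identity $\partial_s(\partial_s\kappa + \tfrac32|\kappa|^2\tau) = \nabla_s^2\kappa + \tfrac12|\kappa|^2\kappa$ abstractly, expanding $\partial_s(\tfrac32 k^2\tau)$ and observing that it cancels precisely the tangential part of $\partial_s^2\kappa$.)

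There is no genuine obstacle here: the content is an elementary computation in the moving frame. The only point demanding care is the bookkeeping of conventions — that $\nu$ is the outer unit normal, that $\kappa = \alpha_{ss} = -k\nu$ with $|\kappa|^2 = k^2$, and the exact form $-\langle\phi,\kappa\rangle\tau$ of the correction term distinguishing $\nabla_s$ from $\partial_s$ — so that every tangential contribution is tracked faithfully and seen to cancel, leaving the stated purely normal field $-(k_{ss}+\tfrac12 k^3)\nu$.
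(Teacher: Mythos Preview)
Your proof is correct and is precisely the direct computation the paper alludes to: the paper's own proof merely cites \cite{DKS02} for the first equality and says the remaining equalities ``follow also by direct calculation using the Frenet equations and noting our sign convention,'' which is exactly what you carry out in detail.
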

\begin{proof}
The first equality is equation (4.1) in \cite{DKS02} which is a short direct calculation.  The second and third follow also by direct calculation using the Frenet equations and noting our sign convention.    
\end{proof}
Let us remark that short time existence of solutions in this setting is well-known.  We refer the reader to \cite{Wu} for example, where a general sketch for higher-order parabolic flows of curves with boundary conditions is provided.  More details for the elastic flow, albeit with clamped boundary conditions, may also be found in \cite{Sp1, Sp2}.\\

In fact, $T=\infty$ is known for each of elastic-type flows considered here, for situations where no issues arise with the boundary.  In all cases the elastic energy is bounded under the flow.  For $\lambda \geq 0$ constant, one has by an inductive argument that $\int k^2 ds$ bounded implies all curvature derivatives are bounded in $L^2$, so it follows by a direct contradiction that $T=\infty$.  In the case $\lambda(t)$, an additional argument is needed using the Gagliardo-Nirenberg Sobolev inequality, Proposition \ref{p}.  We will state our main results with the assumption that neither end of the evolving curve reaches the cone tip (where the boundary conditions cease to make sense).  In the cases where we obtains exponential decay of the speed, it is straightforward to see that if the curve is initially sufficiently far from the cone tip, neither end can reach the cone tip under the evolution and thus $T=\infty$.  In the free elastic case the argument is similar, but instead we have the speed converging exponentially to that of an expanding circular arc, so if the ends are initially sufficiently far from the cone tip they will remain so.

An important ingredient in obtaining long time existence is the following bounds on the $L^2$ norms of all curvature derivatives under each of our flows.  These follow by similar arguments as in \cite{DKS02}.  We state the result as we will use it later in obtaining smooth exponential convergence of solutions to circular arcs (under rescaling in the case $\lambda=0$).  For the case of $\lambda(t)$ we will give the proof in Section \ref{S:lf}.\\

\begin{proposition}\label{bd}
Let $\alpha: \left[ -1, 1\right]\times  \left [ 0, T \right )\to \mathbb{R}^{2}$ is a solution to \eqref{f} and compatible with the generalised Neumann boundary conditions \eqref{E:NBC1} and \eqref{E:NBC2}. For each $\ell \in \mathbb{N}\cup \left\{0 \right\},$ there exists a constant $C_{\ell}^{2}> 0 $ such that $$\int _{\alpha }k_{s^\ell}^{2}\,ds\leq C_{\ell}^{2}$$ for all $t\in \left [ 0,\infty  \right )$.\\    
\end{proposition}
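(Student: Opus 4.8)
The plan is to adapt the scheme of \cite[Section~4]{DKS02} for closed curves to the present boundary setting, arguing by induction on $\ell$ for the quantities $y_\ell(t) := \int_\alpha k_{s^\ell}^2\,ds$; I will describe it for the length-penalised flow (1) with constant $\lambda\geq 0$, the free flow (3) being the case $\lambda=0$ with one extra ingredient noted at the end, while the case (2) of a time-dependent multiplier is handled separately in Section~\ref{S:lf}. For the base case $\ell=0$ I would combine Lemma~\ref{T:evlneqns}(ii) and (iv) with $F = -k_{ss} - \tfrac12 k^3 + \lambda k$ to get $\frac{\mathrm{d}}{\mathrm{d}t}\bigl(\int_\alpha k^2\,ds + 2\lambda L\bigr) = 2\int_\alpha\bigl(k_{ss} + \tfrac12 k^3 - \lambda k\bigr)F\,ds = -2\int_\alpha F^2\,ds \leq 0$, so $y_0(t)\leq E_\lambda[\alpha_0] =: C_0^2$ for all $t$ and, when $\lambda>0$, $L(t)\leq E_\lambda[\alpha_0]/(2\lambda) =: L_+$. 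I would also note that, as long as neither endpoint reaches the cone tip, $L$ is bounded below by a positive constant (two points lying on $\bar\gamma_1$ and $\bar\gamma_2$ at distance $\geq d$ from the origin are a definite distance apart, in terms of $d$ and $\theta_1-\theta_2$).

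For the inductive step, fix $\ell\geq 1$; only the base-case bound $y_0\leq C_0^2$ will actually be needed. Differentiating $y_\ell$ via Lemma~\ref{T:evlneqns}(i) and (iii) gives $\frac{\mathrm{d}}{\mathrm{d}t}y_\ell = 2\int_\alpha k_{s^\ell}\bigl(F_{s^{\ell+2}} + \sum_{j=0}^\ell (k\,k_{s^{\ell-j}}F)_{s^j}\bigr)\,ds - \int_\alpha k_{s^\ell}^2\,k\,F\,ds$. I would then integrate by parts repeatedly; this is legitimate because, by Lemma~\ref{T:BCs}, every odd-order arc-length derivative of $k$ vanishes at $\pm 1$, so all boundary terms that arise vanish. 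The principal part $-k_{s^{\ell+4}}$ of $F_{s^{\ell+2}}$ contributes, after two integrations by parts, the good term $-2\int_\alpha k_{s^{\ell+2}}^2\,ds$, while the term $\lambda k_{s^{\ell+2}}$ contributes the further non-positive term $-2\lambda\int_\alpha k_{s^{\ell+1}}^2\,ds$. After additional integrations by parts to lower the order of any factor that would otherwise carry order $\ell+2$, every remaining contribution can be written as a finite sum of integrals $\int_\alpha P_n^m(k)\,ds$ with $n\geq 2$, all derivatives of order at most $\ell+1$, and $m + \tfrac n2 < 2(\ell+2)+1$.

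Next I would apply Proposition~\ref{p} with interpolation index $\ell+2$ to each such term, choosing the free parameters so that the resulting $\varepsilon\int_\alpha k_{s^{\ell+2}}^2\,ds$ pieces are absorbed into the good term and the remainder is bounded by a constant $C = C(\ell, C_0^2, L_+)$, using $y_0\leq C_0^2$. This yields $\frac{\mathrm{d}}{\mathrm{d}t}y_\ell \leq -\int_\alpha k_{s^{\ell+2}}^2\,ds + C$. To convert this into a uniform-in-time bound, I would use Proposition~\ref{psw} twice: for fixed $\ell$, Lemma~\ref{T:BCs} ensures that each of $k_{s^{\ell+1}}$ and $k_{s^\ell}$ either vanishes at both endpoints or has vanishing mean (according to the parity of $\ell$), so $\int_\alpha k_{s^{\ell+2}}^2\,ds \geq \tfrac{\pi^4}{L^4}\,y_\ell \geq \tfrac{\pi^4}{L_+^4}\,y_\ell$. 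Hence $\frac{\mathrm{d}}{\mathrm{d}t}y_\ell \leq -\tfrac{\pi^4}{L_+^4}\,y_\ell + C$, and ODE comparison gives $y_\ell(t)\leq \max\{y_\ell(0),\, C L_+^4/\pi^4\} =: C_\ell^2$ for all $t$, closing the induction; the standard contradiction argument then upgrades the interval $[0,T)$ of validity to $[0,\infty)$. For the free flow (3) the length is not bounded above, so this last step fails as stated; there I would instead run the whole argument for the flow rescaled to have fixed length, equivalently work with the scale-invariant norms $\|k\|_{\ell,2}$, as in \cite{DKS02, TW24}.

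The step I expect to be the main obstacle is the bookkeeping in the inductive step: organising the repeated integrations by parts so that all boundary terms are killed by Lemma~\ref{T:BCs} and every surviving term is a genuine $P_n^m(k)$ with $n\geq 2$, maximal derivative order $\leq \ell+1$, and scaling exponent below the threshold required by Proposition~\ref{p}. A secondary point needing care is the control of $L$: the upper bound is automatic for (1) with $\lambda>0$ and trivial for (2), but for (3) one must pass to the rescaled flow, and the lower bound used to guarantee $T=\infty$ genuinely relies on the standing hypothesis that the ends of the curve stay away from the cone tip.
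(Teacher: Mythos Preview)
Your proposal is correct and follows essentially the same route as the paper, which (for constant $\lambda$) likewise invokes the evolution identity of Corollary~\ref{T:evlneqns2}(vi), absorbs the $P_n^m$ terms via Proposition~\ref{p}, and defers the $\lambda(t)$ case to Section~\ref{S:lf}. One small refinement: for the lower length bound the paper uses the analytic estimate $L \geq (2\pi\omega)^2/(2E_\lambda[\alpha_0])$ obtained from H\"older and Corollary~\ref{T:omega} (see~\eqref{E:llower}), which is uniform in $t$ and does not rely on a geometric distance-to-tip argument.
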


With the above preliminaries in place, we now discuss in the following sections the three cases of flow: $\lambda > 0,~\lambda=\lambda(t)$ and $\lambda =0.$

\section{Elastic flow with length penalty, $\lambda >0$ }\label{S:lp}
Here we add to the original elastic energy a length term, considering the family of energies
\begin{equation}\label{E}
E_{\lambda}\left [ \alpha_t  \right ]=E_0\left [ \alpha  \right ]+ \lambda~ L\left [ \alpha  \right ],    
\end{equation} where $E_0\left [ \alpha  \right ]=\frac{1}{2}\int _{\alpha }k^{2}ds$, and $\lambda > 0$ is a positive constant.  The corresponding $L^2$-gradient flow for this energy is 
\begin{equation}\label{eq11}
\frac{\partial \alpha }{\partial t}=(  k_{ss}+\frac{1}{2}k^{3}-\lambda~ k)~\nu \mbox{.}
\end{equation}

In our setting, for any fixed $\lambda>0$ there is a unique circular arc centred at the cone tip that is a stationary curve for \eqref{eq11} compatible with the boundary conditions.  Specifically, this circular arc, which we will denote by $\alpha_\infty$, has curvature satisfying
$$\frac{1}{2} k^3 - \lambda\, k = 0$$
and therefore
$$k = \sqrt{2\, \lambda} \mbox{.}$$

Since \eqref{eq11} is the $L^2$-gradient flow for \eqref{E} we immediately have the following:
\begin{lemma}
Under the flow \eqref{eq11}, the energy $ E_{\lambda}\left [ \alpha_t  \right ]$  is non-increasing. Thus for each $t$, 
$$E_{\lambda }\left [ \alpha_t  \right ]\leq E_{\lambda }\left [ \alpha_{0}  \right ]$$
where \begin{equation*}
\frac{\mathrm{d} }{\mathrm{d} t} E_{\lambda }\left [ \alpha_t  \right ]
 = -\int _{\alpha }\left<k_{ss}+\frac{1}{2}k^{3}-\lambda \, k,~F \right>ds =-\int _{\alpha }\left|F \right|^{2}ds \leq 0.\\
\end{equation*}    
\end{lemma}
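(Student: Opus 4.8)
The plan is to differentiate the energy $E_\lambda[\alpha_t] = E_0[\alpha_t] + \lambda L[\alpha_t] = \tfrac12\int_\alpha k^2\, ds + \lambda L$ along the flow \eqref{eq11} and to identify the time derivative with $-\int_\alpha F^2\, ds$. Abstractly this is forced by the fact that \eqref{eq11} is the $L^2$-gradient flow of $E_\lambda$, but I would verify it directly from the evolution equations of Lemma \ref{T:evlneqns}, since this also makes transparent that no boundary terms are lost.

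For the curvature part, Lemma \ref{T:evlneqns}(iv) gives $\tfrac{d}{dt}\int_\alpha k^2\, ds = 2\int_\alpha\bigl(k_{ss} + \tfrac12 k^3\bigr)F\, ds$, hence $\tfrac{d}{dt}E_0[\alpha_t] = \int_\alpha\bigl(k_{ss}+\tfrac12 k^3\bigr)F\, ds$; the two integrations by parts needed to establish (iv) from parts (i) and (iii) produce the boundary terms $\bigl[2kF_s - 2k_sF\bigr]_{\pm1}$, which vanish because $k_s(\pm1,t)=0$ and $k_{sss}(\pm1,t)=0$ by Lemma \ref{T:BCs}, so that also $F_s = -k_{sss} - \tfrac32 k^2 k_s + \lambda k_s = 0$ at the endpoints. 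For the length part, Lemma \ref{T:evlneqns}(ii) gives $\tfrac{d}{dt}\bigl(\lambda L\bigr) = -\lambda\int_\alpha k F\, ds$. Adding the two contributions,
\[
\frac{d}{dt}E_\lambda[\alpha_t] = \int_\alpha \Bigl(k_{ss} + \tfrac12 k^3 - \lambda k\Bigr) F\, ds .
\]
Comparing the general flow \eqref{f} with \eqref{eq11} shows $F = -\bigl(k_{ss} + \tfrac12 k^3 - \lambda k\bigr)$, i.e. $k_{ss} + \tfrac12 k^3 - \lambda k = -F$, so the right-hand side equals $-\int_\alpha F^2\, ds \le 0$. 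Integrating this differential inequality over $[0,t]$ yields $E_\lambda[\alpha_t] \le E_\lambda[\alpha_0]$, which is the assertion.

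The only delicate point — and the one I would regard as the main, if minor, obstacle — is the vanishing of all boundary contributions arising from the integrations by parts. This rests on the generalised Neumann boundary conditions \eqref{E:NBC1}–\eqref{E:NBC2} being preserved along the flow and on the vanishing of the odd-order arc-length derivatives of $k$ at $\pm1$ recorded in Lemma \ref{T:BCs}; granting these, every step above is a local-in-time identity, so the conclusion holds throughout the interval $[0,T)$ of smooth existence.
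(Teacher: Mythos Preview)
Your proof is correct and follows the same line as the paper, which simply remarks that \eqref{eq11} is the $L^2$-gradient flow of $E_\lambda$ and states the lemma without further argument. Your explicit computation via Lemma~\ref{T:evlneqns}(ii) and (iv), together with the boundary-term check using Lemma~\ref{T:BCs}, is precisely the verification underlying the paper's ``immediately''.
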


Taking $F=- k_{ss} - \frac{1}{2} k^3 + \lambda \, k$ we obtain from Lemma \ref{T:evlneqns} the following:\\

\begin{lemma} \label{T:evlneqns1}
Under the flow \eqref{eq11} we have the following evolution equations for various pointwise geometric quantities.
\begin{itemize}
  \item[\textnormal{(i)}] $\displaystyle{\frac{\partial}{\partial t} ds = (  k_{ss}+\frac{1}{2}k^{3}-\lambda~ k)\, k\, ds}$;
  \item[\textnormal{(ii)}] For each $\ell= 0, 1, 2, \ldots$,
  $$\frac{\partial}{\partial t} k_{s^\ell} = -k_{s^{\ell+4}} - \sum_{j=0}^{\ell} \left[ k\, k_{s^{\ell-j}} (  k_{ss}+\frac{1}{2}k^{3}-\lambda~ k)\, \right]_{s^{j}} \mbox{.}$$ 
  \end{itemize}
  \end{lemma}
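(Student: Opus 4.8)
The plan is to obtain both identities by specialising the general evolution equations of Lemma~\ref{T:evlneqns} to the length-penalised flow. Comparing \eqref{eq11} with \eqref{f}, the flow \eqref{eq11} is precisely \eqref{f} with normal speed
$$F = -k_{ss} - \frac{1}{2} k^3 + \lambda\, k = -\left( k_{ss} + \frac{1}{2} k^3 - \lambda\, k \right) \mbox{.}$$
For (i), I would insert this $F$ directly into Lemma~\ref{T:evlneqns}(i): since $\partial_t\, ds = -F\,k\, ds$, we obtain $\partial_t\, ds = \left( k_{ss} + \frac12 k^3 - \lambda\, k\right) k\, ds$, which is exactly the claimed identity.

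For (ii), I would begin from Lemma~\ref{T:evlneqns}(iii),
$$\frac{\partial}{\partial t} k_{s^\ell} = F_{s^{\ell+2}} + \sum_{j=0}^{\ell} \left( k\, k_{s^{\ell-j}}\, F \right)_{s^j} \mbox{,}$$
and substitute $F = -\left( k_{ss} + \frac12 k^3 - \lambda\, k\right)$. Differentiating $F$ a total of $\ell+2$ times with respect to arc length produces the principal part $-k_{s^{\ell+4}}$ together with lower-order contributions coming from $\partial_s^{\ell+2}$ of $\frac12 k^3 - \lambda\, k$; applying the Leibniz rule and collecting these with the terms of the sum, each of which is an $s$-derivative of a product of $k$, its derivatives, and the Euler--Lagrange operator $k_{ss} + \frac12 k^3 - \lambda\, k$, one arrives at the stated divergence form
$$\frac{\partial}{\partial t} k_{s^\ell} = -k_{s^{\ell+4}} - \sum_{j=0}^{\ell} \left[ k\, k_{s^{\ell-j}} \left( k_{ss} + \frac12 k^3 - \lambda\, k\right)\right]_{s^j} \mbox{.}$$
It is worth noting in passing that the principal symbol of the right-hand side is $-\partial_s^4$, so these are (generalised) parabolic equations, consistent with the sign convention fixed in \eqref{f}.

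There is no genuine obstacle here: the only work is the bookkeeping of the Leibniz expansion of $F_{s^{\ell+2}}$ and the reorganisation of the resulting polynomial terms into the form displayed in the statement; alternatively one may simply cite Lemma~\ref{T:evlneqns} and observe that the asserted identities are exactly what results upon inserting the normal speed $F$ of \eqref{eq11}. I record these identities at this point precisely because they are the starting point for the estimates of Section~\ref{S:lp}: differentiating $\int_\alpha k_{s^\ell}^2\, ds$ using (i) and (ii), integrating by parts with the help of Lemma~\ref{T:BCs} to discard boundary terms, and then absorbing lower-order terms via the interpolation inequality of Proposition~\ref{p}.
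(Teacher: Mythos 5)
Your strategy is exactly the paper's: the paper gives no separate proof of this lemma beyond the remark that it follows by taking $F=-k_{ss}-\tfrac12 k^3+\lambda k$ in Lemma~\ref{T:evlneqns}, and your treatment of part (i) is correct and identical to that.

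For part (ii), however, there is a genuine problem with the step where you claim the lower-order contributions of $F_{s^{\ell+2}}$ can be ``collected with the terms of the sum'' to arrive at the displayed identity. Direct substitution of $F=-\bigl(k_{ss}+\tfrac12 k^3-\lambda k\bigr)$ into Lemma~\ref{T:evlneqns}(iii) gives
$\partial_t k_{s^\ell}=-k_{s^{\ell+4}}-\partial_s^{\ell+2}\bigl(\tfrac12 k^3-\lambda k\bigr)-\sum_{j=0}^{\ell}\bigl[k\,k_{s^{\ell-j}}\bigl(k_{ss}+\tfrac12 k^3-\lambda k\bigr)\bigr]_{s^j}$,
and the extra term $-\partial_s^{\ell+2}\bigl(\tfrac12 k^3-\lambda k\bigr)$ is \emph{not} of the form of any summand $\bigl[k\,k_{s^{\ell-j}}(\cdots)\bigr]_{s^j}$, so it cannot be absorbed as you assert. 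The case $\ell=0$ makes this concrete: substitution yields $\partial_t k=-k_{s^4}-3kk_s^2-\tfrac52 k^2k_{ss}-\tfrac12 k^5+\lambda k_{ss}+\lambda k^3$, which is exactly Corollary~\ref{T:evlneqns2}(ii), whereas the formula displayed in the lemma with $\ell=0$ produces only $-k_{s^4}-k^2k_{ss}-\tfrac12 k^5+\lambda k^3$, missing $-3kk_s^2-\tfrac32 k^2k_{ss}+\lambda k_{ss}$. In other words, the identity as printed omits the term $-\partial_s^{\ell+2}\bigl(\tfrac12 k^3-\lambda k\bigr)$ (the subsequent corollary is consistent with the full, corrected formula, so this is evidently a slip in the statement), and your substitution argument in fact proves the corrected identity. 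As written, though, your proof claims a reorganisation of terms that fails; you should either record the corrected formula explicitly or flag the missing term, rather than asserting that the Leibniz expansion collapses into the printed divergence form.
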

\mbox{}\\

 \begin{corollary}\label{T:evlneqns2}
  Under the flow \eqref{eq11}, 
   \begin{enumerate}
\item [(i)] $\displaystyle{\frac{\mathrm{d} }{\mathrm{d} t}L}=-\int _{\alpha }k_{s}^{2}ds+\frac{1}{2}\int _{\alpha }k^{4}ds-\lambda \int _{\alpha }k^{2}ds;$
\item [(ii)]$\displaystyle{\frac{\partial }{\partial t}k}=-k_{s^{4}}-3kk_{s}^{2}-\frac{5}{2}k^{2}k_{ss}-\frac{1}{2}k^{5}+\lambda k_{ss}+\lambda k^{3};$ 
\item [(iii)]$\displaystyle{\frac{\partial }{\partial t}}k_{s}=-k_{s^{5}}-3k_{s}^{3}-12kk_{s}k_{ss}-3k^{4}k_{s}+4\lambda k^{2}k_{s}-\frac{5}{2}k^{2}k_{s^{3}}+\lambda k_{s^{3}};$
\item [(iv)]$\displaystyle{\frac{\mathrm{d} }{\mathrm{d} t}\int_{\alpha } k^{2}ds}=-2\int_{\alpha }k_{s^{2}}^{2}ds+6\int_{\alpha }k^{2}k_{s}^{2}ds-\frac{1}{2}\int_{\alpha }k^{6}ds+\lambda \int_{\alpha }k^{4}ds+2\lambda \int_{\alpha }kk_{s^{2}}ds;$ 
\item [(v)]$\displaystyle{\frac{\mathrm{d} }{\mathrm{d} t}\int _{\alpha }k_{s}^{2}\,ds=-2\int _{\alpha }k_{s^{3}}^{2}\,ds+5\int _{\alpha }k_{ss}^{2}k^{2}\,ds-\frac{5}{3}\int _{\alpha }k_{s}^{4}\,ds-\frac{11}{2}\int _{\alpha }k_{s}^{2}k^{4}\,ds}$

\hspace*{\fill}$+7\lambda \int _{\alpha }k^{2}k_{s}^{2}\,ds-2\lambda \int _{\alpha }k_{ss}^{2}\,ds;$
\item [(vi)]$\displaystyle{\frac{\mathrm{d} }{\mathrm{d} t}\frac{1}{2}\int _{\alpha }k_{s^{l}}^{2}\,ds=-\int _{\alpha }k_{s^{l+2}}^{2}ds-\lambda \int _{\alpha }k_{s^{l+1}}^{2}ds+\lambda \int _{\alpha }P_{4}^{2l}(k)ds+\int _{\alpha }P_{4}^{2l+2}(k)ds}$

\hspace*{\fill}$+\int _{\alpha }P_{6}^{2l}(k)ds.$\\
\end{enumerate} 
\end{corollary}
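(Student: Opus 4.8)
The plan is to derive every identity from the general evolution equations of Lemma~\ref{T:evlneqns} (and its specialisation Lemma~\ref{T:evlneqns1}) by substituting the speed $F=-k_{ss}-\tfrac12k^{3}+\lambda k$ and then integrating by parts, with all boundary contributions vanishing thanks to the no-curvature-flux condition \eqref{E:NBC2} and Lemma~\ref{T:BCs}.

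For (i), plug $F$ into Lemma~\ref{T:evlneqns}(ii) to get $\tfrac{d}{dt}L=\int_{\alpha}k\,(k_{ss}+\tfrac12k^{3}-\lambda k)\,ds$, then integrate the first term by parts once; the boundary term $[k\,k_{s}]_{-1}^{1}$ vanishes by \eqref{E:NBC2}. For (ii) and (iii), take $\ell=0$ and $\ell=1$ in Lemma~\ref{T:evlneqns}(iii) with this $F$, expand $F_{ss}$, $F_{s^{3}}$ and the Leibniz products (using the Frenet relations), and collect terms; equivalently one may use $\partial_{t}k=F_{ss}+k^{2}F$ and $\partial_{t}k_{s}=(\partial_{t}k)_{s}+F\,k\,k_{s}$ and expand. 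For (iv) and (v), substitute $F$ into Lemma~\ref{T:evlneqns}(iv) and (v) respectively, multiply out the integrands, and integrate by parts repeatedly to reach the stated forms. Every boundary term that arises is a product in which at least one factor is an odd-order $s$-derivative of $k$, hence vanishes at $\pm1$ by \eqref{E:NBC2} and Lemma~\ref{T:BCs}; one also uses that odd-order $s$-derivatives of $F$ vanish on the boundary, which is immediate since each such derivative is a polynomial expression in $k$ and its $s$-derivatives whose terms each contain an odd-order derivative of $k$.

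For (vi), combine the product rule
$$\frac{d}{dt}\Bigl(\tfrac12 k_{s^{\ell}}^{2}\,ds\Bigr)=k_{s^{\ell}}\,(\partial_{t}k_{s^{\ell}})\,ds+\tfrac12 k_{s^{\ell}}^{2}\,(\partial_{t}\,ds)$$
with Lemma~\ref{T:evlneqns}(i),(iii) specialised to $F$. Expanding $F_{s^{\ell+2}}=-k_{s^{\ell+4}}-\tfrac12(k^{3})_{s^{\ell+2}}+\lambda k_{s^{\ell+2}}$: the term $-k_{s^{\ell}}k_{s^{\ell+4}}$ integrates by parts twice to $-k_{s^{\ell+2}}^{2}$ (boundary terms vanish since one of $k_{s^{\ell+1}},k_{s^{\ell+2}},k_{s^{\ell+3}}$ is always of odd order), giving the leading term $-\int_{\alpha}k_{s^{\ell+2}}^{2}\,ds$; the term $\lambda k_{s^{\ell+2}}$ integrates by parts once against $k_{s^{\ell}}$ to give $-\lambda\int_{\alpha}k_{s^{\ell+1}}^{2}\,ds$. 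Every remaining contribution comes from $-\tfrac12(k^{3})_{s^{\ell+2}}$, from integrating $\sum_{j=0}^{\ell}\bigl[k\,k_{s^{\ell-j}}(k_{ss}+\tfrac12k^{3}-\lambda k)\bigr]_{s^{j}}$ against $k_{s^{\ell}}$ (shifting $j$ derivatives off by parts), and from the $\partial_{t}\,ds=(k_{ss}+\tfrac12k^{3}-\lambda k)k\,ds$ factor. Counting the number of factors of $k$ and the total number of $s$-derivatives — both of which are preserved under integration by parts — shows each such term has either four factors of $k$ and $2\ell+2$ derivatives, four factors of $k$ and $2\ell$ derivatives (carrying one factor $\lambda$), or six factors of $k$ and $2\ell$ derivatives, i.e.\ lies in $P_{4}^{2\ell+2}(k)$, $\lambda P_{4}^{2\ell}(k)$ or $P_{6}^{2\ell}(k)$. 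Absorbing the numerical coefficients into the $P_{n}^{m}$ notation yields the stated identity.

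All the computations are elementary; the only real care needed is the combinatorial bookkeeping in (vi) and the systematic check that every boundary term produced during integration by parts contains an odd-order derivative of $k$ (equivalently of $F$) and therefore vanishes by \eqref{E:NBC2} and Lemma~\ref{T:BCs}.
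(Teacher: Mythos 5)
Your proposal is correct and follows essentially the same route the paper intends: substitute $F=-k_{ss}-\tfrac12 k^3+\lambda k$ into the general evolution equations of Lemma~\ref{T:evlneqns}, expand, and integrate by parts, with all boundary terms vanishing because each contains an odd-order arc-length derivative of $k$ (Lemma~\ref{T:BCs} and \eqref{E:NBC2}); your parity bookkeeping for (vi) is the right justification for the $P_4^{2\ell+2}$, $\lambda P_4^{2\ell}$ and $P_6^{2\ell}$ classification. I verified the coefficients in (i)--(v) and they match the stated identities, so no gaps remain.
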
 

\begin{theorem} \label{T:main1}
Let $\lambda>0$ be constant.  Given $\alpha_0: \left[ -1, 1\right]\to \mathbb{R}^{2}$ compatible with \eqref{E:NBC1} and \eqref{E:NBC2}, then, under the assumption that neither end of the evolving curve reaches the cone tip, the length-penalised elastic flow \eqref{eq11} with generalised Neumann boundary conditions \eqref{E:NBC1} and \eqref{E:NBC2} has a unique solution $\alpha\left( \cdot, t\right)$ that exists for all time.  In the case that the supporting cone satisfies 
$$\omega \leq  \frac{1}{\sqrt{28}} \approx  0.19$$ 
and $\alpha_0$ satisfies a smallness condition of the form
$$\left\|k_{s} \right\|_{2}^2 \leq c\left( \omega, \underline{L}, \overline{L}, \lambda \right)\mbox{,}$$
then the solution curves converge smoothly and exponentially in the $C^\infty$-topology to the circular arc $\alpha_\infty$ of radius $\frac{1}{\sqrt{2\lambda } }$.\\
\end{theorem}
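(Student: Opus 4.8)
The plan is to run the standard De Giorgi--Koch--Savin-type energy machinery adapted to curves with boundary, as in \cite{DKS02} and \cite{WW}, but tracking carefully the role of the rotation number $\omega$ and the scale-invariant quantity $\|k_s\|_2^2$. Long-time existence is granted by hypothesis (neither end reaches the tip) together with Proposition \ref{bd}, so the content is the asymptotic classification. First I would exploit that $\int_\alpha k\,ds = 2\pi\omega$ is conserved (Corollary \ref{T:omega}) and that the length is controlled: from Corollary \ref{T:evlneqns2}(i) together with the energy monotonicity $E_\lambda[\alpha_t]\le E_\lambda[\alpha_0]$ one extracts two-sided bounds $0<\underline L \le L(t)\le\overline L$ for a suitable time (here the cone-angle smallness enters, via $\|k\|_2^2$ being controlled by $\omega$ and the length through the PSW inequalities of Proposition \ref{psw} applied to $k - \bar k$, since $\bar k = 2\pi\omega/L$). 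This is where the smallness of $\omega$ is used to close the estimates below with positive coefficients.

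The core is a differential inequality for $\mathcal K(t) := \int_\alpha k_s^2\,ds$ (equivalently its scale-invariant version). Starting from Corollary \ref{T:evlneqns2}(v), I would apply the interpolation inequality Proposition \ref{p} to each term $\int k_{ss}^2 k^2$, $\int k_s^4$, $\int k_s^2 k^4$, $\int k^2 k_s^2$, absorbing all of them into $-\int k_{s^3}^2\,ds$ and $-\lambda\int k_{ss}^2\,ds$ with small $\varepsilon$; the resulting lower-order remainder terms are powers of $\int k^2\,ds$, which are bounded by the energy estimate. Crucially, writing $\int_\alpha k^2\,ds = \int_\alpha (k-\bar k)^2\,ds + (2\pi\omega)^2/L$ and bounding $\int(k-\bar k)^2 \le (L^2/\pi^2)\mathcal K$ by Proposition \ref{psw}, the "bad" terms that are cubic or higher in curvature acquire a factor that is small precisely when $\mathcal K$ is small and $\omega \le 1/\sqrt{28}$ — the constant $28 = 4\cdot 7$ presumably coming from the coefficient $7\lambda$ in (v) and the factor arising when one estimates $\int k^2 k_s^2$ by $\|k\|_\infty^2\,\mathcal K$ and then $\|k\|_\infty^2$ via \eqref{eqppsw}. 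The upshot should be an inequality of the form $\frac{d}{dt}\mathcal K \le -c_1\,\mathcal K + c_2\,\mathcal K^{1+\delta}$ on the set where $\mathcal K$ is small, with $c_1>0$ depending on $\lambda,\underline L$. Hence if $\mathcal K(0)$ is small enough, $\mathcal K(t)$ stays small, the length bounds persist for all time by a continuity/bootstrap argument, and $\mathcal K(t)\le \mathcal K(0)e^{-c_1 t}\to 0$.

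From exponential decay of $\|k_s\|_2^2$ I would then upgrade: Proposition \ref{bd} gives uniform $L^2$ bounds on all $k_{s^\ell}$, and interpolating the decaying low-order quantity against these fixed high-order bounds yields exponential decay of $\|k_{s^\ell}\|_2$ for every $\ell\ge 1$, hence (by Sobolev embedding on the curve, Proposition \ref{eq266}) exponential decay of $k - \bar k$ and all its derivatives in $C^\infty$. Next one must identify the limit: since $k_s\to 0$ uniformly, $k(\cdot,t)$ becomes spatially constant in the limit, so the curve subconverges to an arc of some fixed curvature $k_\infty$; conservation of $\int k\,ds = 2\pi\omega$ pins $k_\infty = 2\pi\omega/L_\infty$. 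To force $k_\infty = \sqrt{2\lambda}$ (equivalently, to identify the limiting length), I would use the energy dissipation identity $\frac{d}{dt}E_\lambda = -\int|F|^2\,ds$: the exponential decay makes $\int_0^\infty\!\int|F|^2\,ds\,dt<\infty$ and $\|F\|_\infty\to 0$, and $F = -k_{ss}-\tfrac12 k^3+\lambda k \to -\tfrac12 k_\infty^3+\lambda k_\infty$, which must vanish; with $k_\infty>0$ (guaranteed since $\omega>0$) this gives $k_\infty=\sqrt{2\lambda}$, i.e. radius $1/\sqrt{2\lambda}$, and the arc is centred at the tip because the Neumann conditions force the bounding rays to be normal to a circle centred at the origin. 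Finally, exponential $C^\infty$ decay of the speed gives that $\alpha(\cdot,t)$ itself (not merely a subsequence) converges, since $\int_t^\infty\|\partial_t\alpha\|\,d\tau \to 0$ exponentially.

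The main obstacle I anticipate is the bookkeeping in the differential inequality for $\mathcal K$: getting every interpolated term to carry an honest factor of either $\omega$ or $\mathcal K^{1/2}$ (or a small $\varepsilon$) so that the coefficient of $\mathcal K$ on the right-hand side is strictly negative, and verifying that the threshold is exactly $\omega\le 1/\sqrt{28}$ rather than some other constant — this requires being careful about which terms are genuinely "borderline" (scaling like $\mathcal K$ times a curvature-squared integral) versus strictly subcritical, and about the interplay between the $\lambda\int k^2\,ds$ term in the energy and the $-\lambda\int k_{ss}^2\,ds$ good term in (v). A secondary subtlety is making the length bounds $\underline L,\overline L$ and the smallness threshold mutually consistent via a single continuity argument, since the constant $c(\omega,\underline L,\overline L,\lambda)$ in the hypothesis must be chosen before one knows the length stays in that range.
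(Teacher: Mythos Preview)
Your overall strategy matches the paper's: establish two-sided length bounds from the energy, derive a differential inequality for $\mathcal K=\int k_s^2\,ds$ that forces exponential decay under the smallness hypotheses, then upgrade to all derivatives and identify the limit. Two points deserve correction, however.

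First, your ``secondary subtlety'' is not an issue: the length bounds are \emph{unconditional} and hold for all $t$ directly from $E_\lambda[\alpha_t]\le E_\lambda[\alpha_0]$, with no smallness of $\omega$ or $\mathcal K$ required. The upper bound is simply $\lambda L\le E_\lambda[\alpha_0]$, and the lower bound comes from $2\pi\omega=\int k\,ds\le L^{1/2}\|k\|_2\le L^{1/2}(2E_\lambda[\alpha_0])^{1/2}$. So there is no bootstrap or continuity argument needed to make $\underline L,\overline L$ and the smallness threshold mutually consistent---they are fixed from the outset by $\alpha_0$ and $\lambda$ alone.

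Second, and more substantively, the paper does \emph{not} use the Gagliardo--Nirenberg interpolation of Proposition~\ref{p} for the key inequality, but instead expands every occurrence of $k$ as $(k-\bar k)+\bar k$ and applies the PSW inequalities directly with explicit constants. This is what produces the exact threshold $\omega\le 1/\sqrt{28}$: from Corollary~\ref{T:evlneqns2}(v) the two $\lambda$-terms yield, after estimating $7\lambda\int k^2 k_s^2\,ds\le 14\lambda(\|k-\bar k\|_\infty^2+\bar k^2)\int k_s^2\,ds$ and $-2\lambda\int k_{ss}^2\,ds\le -2\lambda(\pi^2/L^2)\int k_s^2\,ds$, a net contribution $2\lambda(7\bar k^2-\pi^2/L^2)\int k_s^2\,ds = \tfrac{2\lambda\pi^2}{L^2}(28\omega^2-1)\int k_s^2\,ds$, which is non-positive precisely when $\omega\le 1/\sqrt{28}$. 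The remaining (non-$\lambda$) terms are then controlled by $\|k_s\|_2$-smallness alone and absorbed into $-\tfrac18\int k_{s^3}^2\,ds$, leaving $\frac{d}{dt}\mathcal K\le -\tfrac{13}{6}\bar k^4\mathcal K$. Your plan via generic interpolation would work in principle but would not isolate this clean threshold; the paper's explicit-constant approach is what makes the $1/\sqrt{28}$ statement possible.
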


\begin{remarks}
\begin{enumerate}
    \item It is interesting to note that the boundary conditions and length penalty are enough to determine for any given initial curve $\alpha_0$ the unique limiting circular arc.  This is in contrast to the case of curves evolving between parallel lines \cite{WW} where the exact height of the limiting straight horizontal line cannot be determined.
    \item The smallness condition on $\left\| k_s \right\|_2$ referred to above is given explicitly by \eqref{E:smallness1}.\\
\end{enumerate}
\end{remarks}

\begin{proof}
As we have mentioned, short-term existence of a unique solution to the flow equation is essentially standard and well-known.  The fact that the solution to the flow exists for all time may be proven using a short contradiction argument.  Suppose that $T< \infty$.  Since $E_\lambda(\alpha_t)$ is non-increasing, we have
\begin{equation} \label{E:Ebound}
  \frac{1}{2} \int_{\alpha } k^2 ds + \lambda L \leq E_\lambda[\alpha _{0}]
\end{equation}
for any $t$ and at the assumed $T$.  Crucially, this gives that both $\int k^2 ds$ and $L$ remain bounded up until time $T$.

It follows using the evolution equation of Corollary \ref{T:evlneqns2}, (vi) and interpolation that $\int k_{s^\ell}^2\,ds$ is also bounded up until time $T$.  Thus, $\alpha\left( \cdot, T\right)$ is a smooth curve compatible with the generalised Neumann boundary conditions for which we may again apply short-time existence, contradicting the maximality of $T$.  We conclude $T=\infty$.\\
\end{proof}

Next, we observe similarly, as in \cite{lin2012l2} that the length of the evolving curve is also bounded below.  We estimate using the H\"{o}lder inequality and \eqref{E:Ebound}
$$\theta_1 - \theta_2 = 2\pi\omega = \int_{\alpha } k \, ds \leq L^{\frac{1}{2}} \left( \int k^2 ds\right)^{\frac{1}{2}} \leq L^{\frac{1}{2}} \left( 2 E_\lambda[\alpha _{0}]\right)^{\frac{1}{2}}$$
so
\begin{equation} \label{E:llower}
  L \geq \frac{\left( \theta_1 - \theta_2 \right)^2}{2 E_\lambda[\alpha _{0}]}=\frac{2\,\pi ^{2} \omega ^{2}}{E_\lambda[\alpha _{0}]} =: \underline{L}>0\mbox{.}\\
\end{equation}

We also have $L \leq \overline{L} := \frac{E_{\lambda }[\alpha _{0}]}{\lambda }$ from \eqref{E:Ebound}.
That $L \leq \overline{L}$ facilitates a much more direct argument for the geometric stability of circular arcs for small $\lambda>0$ as compared with the free elastic flow case.  We begin with the evolution equation for the key smallness quantity.\\

\begin{lemma}\label{1k}
Let $\alpha :\left [ -1, 1 \right ]\times \left [ 0,  \infty\right )\to \mathbb{R}^{2}$ be a length-penalised elastic flow. Then 
\begin{align*}
&\frac{\mathrm{d} }{\mathrm{d} t}\int _{\alpha }k_{s}^{2}\,ds\\
&\leq -\frac{1}{8}\int _{\alpha }k_{s^3}^{2}\,ds-\frac{13}{6}\bar{k}^{4}\int _{\alpha }k_{s}^{2}\,ds-2\lambda \int _{\alpha }k_{ss}^{2}\,ds+14\lambda\left ( \left\|k-\bar{k}^2 \right\|_{\infty }^{2}+\bar{k} \right )\int _{\alpha }k_{s}^{2}\,ds\\
&\quad-22\bar{k}^{3}\int _{\alpha }\left ( k-\bar{k} \right )k_{s}^{2}\,ds+5\int _{\alpha }\left ( k-\bar{k} \right )^{2}k_{ss}^{2}\,ds+10\bar{k}\int _{\alpha }\left ( k-\bar{k} \right )k_{ss}^{2}\,ds. 
\end{align*}
\end{lemma}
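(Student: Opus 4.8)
The plan is to start from the exact evolution equation in Corollary~\ref{T:evlneqns2}(v), namely
$$\frac{\mathrm{d}}{\mathrm{d}t}\int_\alpha k_s^2\,ds = -2\int_\alpha k_{s^3}^2\,ds + 5\int_\alpha k_{ss}^2 k^2\,ds - \frac{5}{3}\int_\alpha k_s^4\,ds - \frac{11}{2}\int_\alpha k_s^2 k^4\,ds + 7\lambda\int_\alpha k^2 k_s^2\,ds - 2\lambda\int_\alpha k_{ss}^2\,ds,$$
and rewrite the $\lambda$-independent quartic/sextic terms by splitting $k = \bar k + (k-\bar k)$ so as to extract leading-order contributions controlled by powers of $\bar k$ times $\int k_s^2\,ds$ (which are the "good" negative terms driving decay) plus remainder terms that carry a factor of $k-\bar k$ and can eventually be absorbed once the smallness of $\|k-\bar k\|_\infty$ (equivalently of $\|k_s\|_2$ via Proposition~\ref{eq266}) is invoked in the main theorem. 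Concretely, I would expand $k^2 = \bar k^2 + 2\bar k(k-\bar k) + (k-\bar k)^2$ inside $5\int k_{ss}^2 k^2\,ds$ to produce the terms $5\bar k^2\int k_{ss}^2\,ds$, $10\bar k\int(k-\bar k)k_{ss}^2\,ds$, and $5\int(k-\bar k)^2 k_{ss}^2\,ds$ — the last two appear verbatim in the claimed inequality. Similarly expand $k^4$ in $-\frac{11}{2}\int k_s^2 k^4\,ds$; the dominant piece $-\frac{11}{2}\bar k^4\int k_s^2\,ds$ is negative, and the cross terms regroup (after also using the $-\frac53\int k_s^4\,ds\le 0$ term which can simply be discarded) into the $-22\bar k^3\int(k-\bar k)k_s^2\,ds$ term plus lower-order pieces in $(k-\bar k)$. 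For the $\lambda$-terms, I expand $7\lambda\int k^2 k_s^2\,ds$ as $7\lambda\bar k^2\int k_s^2\,ds + 7\lambda\int\bigl(2\bar k(k-\bar k)+(k-\bar k)^2\bigr)k_s^2\,ds$ and bound the latter by $14\lambda(\|k-\bar k^2\|_\infty^2 + \bar k)\int k_s^2\,ds$ using $2\bar k(k-\bar k)+(k-\bar k)^2 = (k-\bar k+\bar k)^2 - \bar k^2$... more carefully, the quantity $\|k - \bar k^2\|_\infty$ appearing in the statement should be parsed and matched to the pointwise bound on $2\bar k(k-\bar k)+(k-\bar k)^2 \le 2(\|k-\bar k\|_\infty^2 + \bar k^2) $ or similar, so I would reconcile the constant $14$ with a crude pointwise estimate. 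The term $-2\lambda\int k_{ss}^2\,ds$ is already in the desired form and is simply carried through unchanged.

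The only genuinely nontrivial analytic step is the treatment of the leading positive term $5\bar k^2\int_\alpha k_{ss}^2\,ds$: on its own this is not absorbed by $-2\int k_{s^3}^2\,ds$ without further work, because $\int k_{ss}^2\,ds$ cannot be controlled by $\int k_{s^3}^2\,ds$ alone on a curve with boundary of fixed length. The resolution, exactly as in \cite{DKS02} and \cite{WW}, is to apply the interpolation inequality Proposition~\ref{p} (with $n=2$, appropriate $m,\ell$) together with the PSW inequalities of Proposition~\ref{psw} to $k_s$ (which has mean zero, since $\int k_s\,ds = k(1)-k(-1)$... actually one needs $k_s$ vanishing at the endpoints, which holds by \eqref{E:NBC2}, giving the Dirichlet form of PSW). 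This yields $\int k_{ss}^2\,ds \le \varepsilon\int k_{s^3}^2\,ds + C(\varepsilon, \underline L, \overline L)\int k_s^2\,ds$, and then one absorbs $5\bar k^2\int k_{ss}^2\,ds$ by choosing $\varepsilon$ small enough that $5\bar k^2\varepsilon$ eats into the available $-2$ of the $-2\int k_{s^3}^2\,ds$, leaving $-\frac18\int k_{s^3}^2\,ds$ and producing a contribution $+C\int k_s^2\,ds$ that gets combined into the $-\frac{13}{6}\bar k^4\int k_s^2\,ds$ coefficient. Here $\bar k$ is a controlled constant: $\bar k = \frac{2\pi\omega}{L}$ by Corollary~\ref{T:omega}, and $L\in[\underline L,\overline L]$ by \eqref{E:llower} and the upper length bound, so $\bar k$ and all its powers are bounded above and below in terms of $\omega,\underline L,\overline L$; this is what makes $-\frac{13}{6}\bar k^4\int k_s^2\,ds$ a clean, coefficient-explicit negative term rather than something with a bad constant.

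I expect the main obstacle to be bookkeeping: getting the precise rational coefficients ($\frac18$, $\frac{13}{6}$, $22$, $14$, $10$, $5$) to come out exactly requires careful tracking of how the discarded term $-\frac53\int k_s^4\,ds$, the cross terms from expanding $k^4$, and the absorbed interpolation remainder all combine, and requires making the $\varepsilon$-choice in Proposition~\ref{p} quantitatively. There is also a small subtlety in justifying the integrations by parts implicit in Corollary~\ref{T:evlneqns2}(v): these rely on Lemma~\ref{T:BCs} ($k_{s^{2\ell-1}}$ vanishes at $\pm1$), which is available. None of this is deep — it is the standard \cite{DKS02}-style computation adapted to the cone boundary — but it is where all the care must go; the structural idea (split off $\bar k$, interpolate the borderline $k_{ss}^2$ term, discard manifestly negative quartics) is routine in this literature.
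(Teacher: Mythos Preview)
Your overall structure---start from Corollary~\ref{T:evlneqns2}(v), expand $k=(k-\bar k)+\bar k$ in the quartic and sextic terms, and then absorb the dangerous piece $5\bar k^2\int k_{ss}^2\,ds$---is exactly the paper's strategy. However, your proposed mechanism for that absorption is both more complicated than what is needed and would not produce the stated inequality. You suggest invoking Proposition~\ref{p} together with PSW to get $\int k_{ss}^2\,ds\le \varepsilon\int k_{s^3}^2\,ds + C(\varepsilon,\underline L,\overline L)\int k_s^2\,ds$; but Proposition~\ref{p} produces remainder terms in powers of $\int k^2\,ds$, not $\int k_s^2\,ds$, and any $L$-dependent constant would not collapse to the clean coefficient $-\tfrac{13}{6}\bar k^4$ in front of $\int k_s^2\,ds$. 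The paper's route is far more elementary: since $k_s(\pm1,t)=0$ one integrates by parts once to write $5\bar k^2\int k_{ss}^2\,ds=-5\bar k^2\int k_{s^3}k_s\,ds$, and then a single application of Young's inequality with parameter $b_0$ gives $\le b_0\int k_{s^3}^2\,ds+\tfrac{25}{4b_0}\bar k^4\int k_s^2\,ds$. Choosing $b_0=\tfrac{15}{8}$ leaves exactly $-\tfrac18\int k_{s^3}^2\,ds$ from the original $-2$, and the combination $\tfrac{25}{4b_0}-\tfrac{11}{2}=\tfrac{10}{3}-\tfrac{11}{2}=-\tfrac{13}{6}$ produces the other explicit coefficient. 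No interpolation lemma, no length bounds.

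The other place where you are vague---``cross terms regroup\ldots into lower-order pieces''---is handled the same way: the odd cross term $-22\bar k\int(k-\bar k)^3 k_s^2\,ds$ from the expansion of $k^4$ is estimated by Young with a second parameter $b_1$, and with $b_1=1$ the resulting $(k-\bar k)^4 k_s^2$ and $(k-\bar k)^2 k_s^2$ contributions have non-positive sign and are simply discarded along with $-\tfrac53\int k_s^4\,ds$. The $\lambda$-term is treated exactly as you surmise, via $k^2\le 2((k-\bar k)^2+\bar k^2)$ pointwise. So the ingredients you list are right, but the key step is a one-line integration-by-parts-plus-Young rather than the heavier interpolation machinery; this is precisely what makes the constants explicit and independent of $\underline L,\overline L$.
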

\begin{proof}
 As in \cite{TW24}, by using $k=(k-\bar{k})+\bar{k}$, we have 
\begin{align*}
5 \int k^2 k_{ss}^2 \, ds &= 5 \int_{\alpha } \left( k - \bar{k} \right)^2 k_{ss}^2 \, ds + 10 \bar{k} \int_{\alpha } \left( k - \bar{k} \right) k_{ss}^2 \, ds + 5 \bar{k}^2 \int_{\alpha } k_{ss}^2 \, ds, \\
-\frac{11}{2} \int_{\alpha } k^4 k_{s}^2 \, ds &= -\frac{11}{2} \int_{\alpha } \left( k - \bar{k} \right)^4 k_{s}^2 \, ds - 22 \bar{k} \int_{\alpha } \left( k - \bar{k} \right)^3 k_{s}^2 \, ds \\
&\quad - 33 \bar{k}^2 \int_{\alpha } \left( k - \bar{k} \right)^2 k_{s}^2 \, ds - 22 \bar{k}^3 \int_{\alpha } \left( k - \bar{k} \right) k_{s}^2 \, ds - \frac{11}{2} \bar{k}^4 \int_{\alpha } k_{s}^2 \, ds.
\end{align*}
Furthermore, using Young's inequality for any $b_0, b_1 >0,$ we obtain
\begin{align*}
5\bar{k}^2 \int_{\alpha } k_{ss}^2 \, ds &= -5\bar{k}^2 \int_{\alpha } k_{s^3} k_{s} \, ds \leq b_0 \int_{\alpha } k_{s^3}^2 \, ds + \frac{25}{4 b_0} k^4 \int_{\alpha } k_{s}^2 \, ds, \\
-22 \bar{k} \int_{\alpha } \left( k - \bar{k} \right)^3 k_{s}^2 \, ds &\leq 22 b_1 \bar{k}^2 \int_{\alpha } \left( k - \bar{k} \right)^2 k_{s}^2 \, ds + \frac{11}{2 b_1} \int_{\alpha } \left( k - \bar{k} \right)^4 k_{s}^2 \, ds.
\end{align*}
Also, we have 
$$7\lambda \int _{\alpha }k^{2}k_{s}^{2}\,ds\leq 14\lambda \left ( \left\|k-\bar{k} \right\|_{\infty }^{2}+\bar{k} \right )\int _{\alpha }k_{s}^{2}\,ds .$$ 
Using Lemma \ref{T:evlneqns2},(v), we have
\begin{align*}
\frac{\mathrm{d} }{\mathrm{d} t} \int_{\alpha } k_{s}^2 \, ds &\leq - (2 - b_0) \int_{\alpha } k_{s^3}^2 \, ds - \frac{5}{3} \int_{\alpha } k_{s}^4 \, ds - \frac{11}{2} \left( 1 - \frac{1}{b_1} \right) \int_{\alpha } \left( k - \bar{k} \right)^4 k_{s}^2 \, ds \\
&\quad - \left( 33 - 22 b_1 \right) \bar{k}^2 \int_{\alpha } \left( k - \bar{k} \right)^2 k_{s}^2 \, ds + \left( \frac{25}{4 b_0}  - \frac{11}{2}  \right)\bar{k}^4 \int_{\alpha } k_{s}^2 \, ds, \\
&\quad - 22 \bar{k}^3 \int_{\alpha } \left( k - \bar{k} \right) k_{s}^2 \, ds + 5  \int_{\alpha } \left( k - \bar{k} \right) k_{ss}^2 \, ds + 10 \bar{k} \int_{\alpha } \left( k - \bar{k} \right) k_{s}^2 \, ds\\&\quad +14\lambda \left ( \left\|k-\bar{k} \right\|_{\infty }^{2}+\bar{k}^2 \right )\int _{\alpha }k_{s}^{2}\,ds -2\lambda \int _{\alpha }k_{ss}^{2}\,ds.
\end{align*}
Let $b_0 = \frac{15}{8}$ and $b_1 = 1.$  Discarding some non-positive terms that are not useful completes the proof.
\end{proof}

\begin{lemma}\label{2k}
 Under the flow \eqref{eq11}, 
\begin{align}\label{E:ksl2}
&\frac{\mathrm{d} }{\mathrm{d} t}\int _{\alpha }k_{s}^{2}\,ds\\ \nonumber
&\leq -\bigg[ \frac{1}{8}-\left ( \frac{28\lambda \overline{L}^{2} }{\pi ^{2}}+10 \right )\frac{\overline{L}^{3}}{\pi ^{3}}\left\|k_{s} \right\|_{2}^{2}-\left ( 10(2\omega )+22(2\omega )^{3} \right )\sqrt{\frac{2\overline{L}^{3}}{\pi ^{3}}}\left\|k_{s} \right\|_{2}\bigg]\int _{\alpha }k_{s^{3}}^{2}\,ds\\ \nonumber
& \quad+\bigg[2\lambda\left ( 7\bar{k}^{2} -\frac{\pi ^{2}}{\underline{L}^{2}}\right )\bigg]\int _{\alpha }k_{s}^{2}\,ds-\frac{13}{6}\bar{k}^{4}\int _{\alpha }k_{s}^{2}\,ds. 
\end{align}
In particular, if $\omega \leq \frac{1}{\sqrt{28}}$ then
at any time $t$ such that
\begin{multline} \label{E:smallness1}
\left\|k_{s} \right\|_{2}^{2}\leq c\left( \omega, \underline{L}, \overline{L}, \lambda \right)\\
 := \frac{2\pi ^{7}}{(56\lambda \underline{L}^{2}+20\pi ^{2})^{2}}\left ( \sqrt{(176\omega^{3}+20\omega )^{2}+\frac{(14\lambda \overline{L}^{2}+5\pi ^{2})}{4\pi ^{2}}} -(176\omega^{3}+20\omega )\right )^{2},
\end{multline}
 we have
 \begin{equation}\label{ec}
\frac{\mathrm{d} }{\mathrm{d} t}\int _{\alpha }k_{s}^{2}\,ds\leq -\frac{13}{6}\bar{k}^4\int _{\alpha }k_{s}^{2}\,ds
\end{equation} 
and thus there are constants $C_1>0$, depending only on $\alpha_0$ and $\delta_1>0$, depending only on $\omega$ and $\underline{L}$ such that
\begin{equation}\label{exc1}
\left\| k_s \right\|_2^2 \leq C_1 e^{-\delta_1\, t} \mbox{.}    \end{equation}
 \end{lemma}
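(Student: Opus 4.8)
The starting point is Lemma~\ref{1k}, which already isolates the genuinely good terms (the negative multiples of $\int k_{s^3}^2\,ds$, $\int k_{ss}^2\,ds$ and $\bar k^4 \int k_s^2\,ds$) from the ``error'' terms that involve extra factors of $k-\bar k$ or $\lambda$. The plan is to absorb every one of these error terms into the good terms, at the cost of a smallness hypothesis on $\|k_s\|_2$. First I would convert everything to the scale-invariant language and use the known bounds $\underline L \le L \le \overline L$ from \eqref{E:llower} and the line after it. The key elementary facts are: (i) since $k-\bar k$ has mean zero on $\alpha$, Proposition~\ref{eq266} gives $\|k-\bar k\|_\infty^2 \le \tfrac{2L}{\pi}\int k_s^2\,ds$, and Proposition~\ref{psw} gives $\int (k-\bar k)^2\,ds \le \tfrac{L^2}{\pi^2}\int k_s^2\,ds$; (ii) $\bar k = 2\pi\omega/L$, so $\bar k$ is controlled by $\omega$ and $\underline L$; (iii) since $k_s$ vanishes at the endpoints, Proposition~\ref{psw} also gives $\int k_s^2\,ds \le \tfrac{L^2}{\pi^2}\int k_{ss}^2\,ds$ and hence $\int k_{ss}^2\,ds \le \tfrac{L^2}{\pi^2}\int k_{s^3}^2\,ds$.

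Next I would bound each error term. The terms $5\int(k-\bar k)^2 k_{ss}^2\,ds$ and $10\bar k\int(k-\bar k)k_{ss}^2\,ds$ are estimated by pulling out $\|k-\bar k\|_\infty^2$ resp. $\|k-\bar k\|_\infty$, applying (i), then using (iii) to land on $\int k_{s^3}^2\,ds$ with a coefficient proportional to powers of $\|k_s\|_2$; this produces the $\big(\tfrac{28\lambda\overline L^2}{\pi^2}+10\big)\tfrac{\overline L^3}{\pi^3}\|k_s\|_2^2$ and the $10(2\omega)\sqrt{2\overline L^3/\pi^3}\,\|k_s\|_2$ contributions in \eqref{E:ksl2}. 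The term $-22\bar k^3\int(k-\bar k)k_s^2\,ds$ is handled the same way: bound $|k-\bar k|$ by $\|k-\bar k\|_\infty$, use (i) on the $\|k-\bar k\|_\infty$ factor, then $\int k_s^2\,ds \le \tfrac{L^2}{\pi^2}\int k_{ss}^2\,ds \le \tfrac{L^4}{\pi^4}\int k_{s^3}^2\,ds$, giving the $22(2\omega)^3\sqrt{2\overline L^3/\pi^3}\,\|k_s\|_2$ term. The $\lambda$-terms: write $14\lambda\big(\|k-\bar k\|_\infty^2 + \bar k\big)\int k_s^2\,ds$; the $\|k-\bar k\|_\infty^2$ piece again becomes $\lambda\cdot(\text{const})\cdot\|k_s\|_2^2\int k_{s^3}^2\,ds$ after (i) and (iii), while the $\bar k$ piece combines with the genuinely negative $-2\lambda\int k_{ss}^2\,ds$: using (iii) in the form $\int k_{ss}^2\,ds \ge \tfrac{\pi^2}{L^2}\int k_s^2\,ds \ge \tfrac{\pi^2}{\underline L^2}\int k_s^2\,ds$ converts $-2\lambda\int k_{ss}^2\,ds$ into $-2\lambda\tfrac{\pi^2}{\underline L^2}\int k_s^2\,ds$, yielding exactly the bracket $2\lambda\big(7\bar k^2 - \tfrac{\pi^2}{\underline L^2}\big)\int k_s^2\,ds$ displayed in \eqref{E:ksl2}. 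Collecting everything gives \eqref{E:ksl2}.

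From \eqref{E:ksl2} the conclusion is a matter of bookkeeping. The coefficient of $-\int k_{s^3}^2\,ds$ is $\tfrac18$ minus a quadratic-in-$\|k_s\|_2$ expression $A\|k_s\|_2^2 + B\|k_s\|_2$ with $A = \big(\tfrac{28\lambda\overline L^2}{\pi^2}+10\big)\tfrac{\overline L^3}{\pi^3}$ and $B = (176\omega^3 + 20\omega)\sqrt{2\overline L^3/\pi^3}$ (rewriting $(2\omega)^3 = 8\omega^3$ etc.). Solving $A\|k_s\|_2^2 + B\|k_s\|_2 \le \tfrac18$ for $\|k_s\|_2$ by the quadratic formula gives precisely the threshold $c(\omega,\underline L,\overline L,\lambda)$ in \eqref{E:smallness1} after simplification (the $(56\lambda\underline L^2+20\pi^2)^2$ denominator comes from $4A^2$ after clearing $\overline L$ powers; one should double-check the $\underline L$ vs $\overline L$ placement here, but this is the routine algebra). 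Simultaneously, the smallness hypothesis $\omega \le 1/\sqrt{28}$ makes $7\bar k^2 - \tfrac{\pi^2}{\underline L^2} = 7\tfrac{4\pi^2\omega^2}{L^2} - \tfrac{\pi^2}{\underline L^2} \le \tfrac{\pi^2}{\underline L^2}(28\omega^2 - 1) \le 0$, so that the middle bracket in \eqref{E:ksl2} is $\le 0$ and may be discarded, leaving \eqref{ec}. Finally, $\bar k^4 = (2\pi\omega/L)^4 \ge (2\pi\omega/\overline L)^4 =: \tfrac{6}{13}\delta_1 > 0$ depends only on $\omega$ and $\overline L$ (equivalently on $\omega, \underline L$ and $E_\lambda[\alpha_0]$), so \eqref{ec} reads $\tfrac{d}{dt}\|k_s\|_{0,2}$-type quantity $\le -\delta_1 \int k_s^2\,ds$; Gronwall then gives \eqref{exc1} with $C_1 = \int_\alpha (k_0)_s^2\,ds$ (times a bounded power of $L$ to account for the scaling in $\|\cdot\|_2$ versus $\int_\alpha\cdot\,ds$, using $\underline L \le L \le \overline L$). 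The one subtlety worth flagging: the smallness condition \eqref{E:smallness1} must be shown to be \emph{preserved} along the flow — but that is exactly what \eqref{ec} gives, since it forces $\|k_s\|_2$ to decrease once the condition holds initially, so a standard continuity/bootstrap argument closes the loop. The main obstacle is thus not conceptual but the careful tracking of constants through the interpolation chain so that the final threshold matches \eqref{E:smallness1} exactly; everything else is a direct application of Propositions~\ref{psw} and~\ref{eq266} together with Lemma~\ref{1k}.
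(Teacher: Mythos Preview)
Your proposal is correct and follows essentially the same route as the paper: start from Lemma~\ref{1k}, estimate each error term via Propositions~\ref{psw} and~\ref{eq266} (using $\|k-\bar k\|_\infty^2 \le \tfrac{2L}{\pi}\|k_s\|_2^2$ and iterated Poincar\'e to reach $\|k_{s^3}\|_2^2$), convert $-2\lambda\int k_{ss}^2$ into $-2\lambda\tfrac{\pi^2}{L^2}\int k_s^2$, and then solve the resulting quadratic in $\|k_s\|_2$ under the condition $28\omega^2\le 1$. The only slip is in your narrative bookkeeping---the $\tfrac{28\lambda\overline L^2}{\pi^2}$ contribution to the $\|k_s\|_2^2$-coefficient comes from the $14\lambda\|k-\bar k\|_\infty^2\int k_s^2$ term (as you in fact say later), not from $5\int(k-\bar k)^2 k_{ss}^2$---but this does not affect the argument.
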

 \mbox{}\\
 
 \begin{remark}
By rewriting all terms on the second line of \eqref{E:ksl2} as a quadratic in $\omega^2$, it is possible to write down a smallness requirement on $\omega$ that depends on $\lambda$ and $E_\lambda\left[ \alpha_0 \right]$.  However, we have chosen here instead to simply give an absolute upper bound on $\omega$ by ensuring that the first term on the second line of \eqref{E:ksl2} is non-positive.     
 \end{remark}
 \begin{proof}
Applying Propositions \ref{psw} and \ref{eq266} similarly as in the corresponding argument in \cite{TW24}, we estimate
\begin{multline*}
14\lambda \left ( \left\|k-\bar{k} \right\|_{\infty }^{2}+\bar{k} \right )\int _{\alpha }k_{s}^{2}\,ds  \leq 28 \lambda \frac{L}{\pi }\int _{\alpha }k_{s}^{2}\,ds\int _{\alpha }k_{s}^{2}\,ds+14\lambda \bar{k}^{2}\int _{\alpha }k_{s}^{2}\,ds \\
 \leq  28 \lambda \frac{L^5}{\pi^5 }\int _{\alpha }k_{s^3}^{2}\,ds\int _{\alpha }k_{s}^{2}\,ds+14\lambda \bar{k}^{2}\int _{\alpha }k_{s}^{2}\,ds
\end{multline*}
and 
$$-2\lambda \int _{\alpha }k_{ss}^{2}\,ds\leq -2\lambda \frac{\pi ^{2}}{L^{2}}\int _{\alpha }k_{s}^{2}\,ds,$$
\begin{equation*}
-22\bar{k}^{3}\int _{\alpha }\left ( k-\bar{k} \right )k_{s}^{2}\,ds \leq 22\left ( \frac{2\omega \pi }{L} \right )^{3}\left\|k-\bar{k} \right\|_{\infty }\left\| k_{s}\right\|_{2}^{2} 
 \leq 22\left ( 2\omega  \right )^{3} \sqrt{\frac{2L^{3}}{\pi ^{3}}}\left\| k_{s}\right\|_{2}\left\| k_{s^{3}}\right\|_{2}^{2}
\end{equation*}
$$5\int _{\alpha }\left ( k-\bar{k} \right )^{2}k_{ss}^{2}\,ds\leq 5\left\|k-\bar{k} \right\|_{\infty }^{2}\left\|k_{ss} \right\|_{2}^{2}\leq \frac{10L^{3}}{\pi ^{3}}\left\|k_{s} \right\|_{2}^{2}\left\|k_{s^3} \right\|_{2}^{2},$$
\begin{equation*}
10\bar{k}\int _{\alpha }\left ( k-\bar{k} \right )k_{s}^{2}\,ds \leq 22\left ( \frac{2\omega \pi }{L} \right )\left\|k-\bar{k} \right\|_{\infty }\left\| k_{ss}\right\|_{2}^{2} 
  \leq 10\left ( 2\omega  \right ) \sqrt{\frac{2L^{3}}{\pi ^{3}}}\left\| k_{s}\right\|_{2}\left\| k_{s^{3}}\right\|_{2}^{2}.
\end{equation*}
By inserting these estimates into the inequality of Lemma \ref{1k}, we obtain
\begin{align*}
&\frac{\mathrm{d} }{\mathrm{d} t}\int _{\alpha }k_{s}^{2}\,ds\\
&\leq -\bigg[ \frac{1}{8}-\left ( \frac{28\lambda L^{2} }{\pi ^{2}}+10 \right )\frac{L^{3}}{\pi ^{3}}\left\|k_{s} \right\|_{2}^{2}-\left ( 10(2\omega )+22(2\omega )^{3} \right )\sqrt{\frac{2L^{3}}{\pi ^{3}}}\left\|k_{s} \right\|_{2}\bigg]\int _{\alpha }k_{s^{3}}^{2}\,ds\\&\quad +\bigg[\left ( 7\bar{k}^{2} -\frac{\pi ^{2}}{L^{2}}\right )\bigg]\int _{\alpha }k_{s}^{2}\,ds-\frac{13}{6}\bar{k}^{4}\int _{\alpha }k_{s}^{2}\,ds\\&\leq -\bigg[ \frac{1}{8}-\left ( \frac{28\lambda L^{2} }{\pi ^{2}}+10 \right )\frac{L^{3}}{\pi ^{3}}\left\|k_{s} \right\|_{2}^{2}-\left ( 10(2\omega )+22(2\omega )^{3} \right )\sqrt{\frac{2L^{3}}{\pi ^{3}}}\left\|k_{s} \right\|_{2}\bigg]\int _{\alpha }k_{s^{3}}^{2}\,ds\\&\quad +\bigg[\frac{2\lambda \pi ^{2}}{L^{2}}\left ( 7(2\omega )^{2}-1 \right )\bigg]\int _{\alpha }k_{s}^{2}\,ds-\frac{13}{6}\bar{k}^{4}\int _{\alpha }k_{s}^{2}\,ds
\end{align*}
 and if $\omega \leq \frac{1}{\sqrt{28}}$
 we have
 \begin{align*}
&\frac{\mathrm{d} }{\mathrm{d} t}\int _{\alpha }k_{s}^{2}\,ds\\
&\leq -\bigg[ \frac{1}{8}-\left ( \frac{28\lambda L^{2} }{\pi ^{2}}+10 \right )\frac{L^{3}}{\pi ^{3}}\left\|k_{s} \right\|_{2}^{2}-\left ( 10(2\omega )+22(2\omega )^{3} \right )\sqrt{\frac{2L^{3}}{\pi ^{3}}}\left\|k_{s} \right\|_{2}\bigg]\int _{\alpha }k_{s^{3}}^{2}\,ds\\
&\quad -\frac{13}{6}\bar{k}^{4}\int _{\alpha }k_{s}^{2}\,ds.
\end{align*}
Finding the smallest positive root $x$ of $ \frac{1}{8}-\left ( \frac{28\lambda \overline{L}^{2} }{\pi ^{2}}+10 \right )\frac{\overline{L}^{3}}{\pi ^{3}}x^2-\left ( 10(2\omega )+22(2\omega )^{3} \right )\sqrt{\frac{2L^{3}}{\pi ^{3}}}x$, where $x^2=\left\|k_{s} \right\|_{2}^{2},$ we obtain that if 
\begin{equation}\label{x_0}
\left\|k_{s} \right\|_{2}\leq \frac{\pi ^{5}}{(56\lambda \underline{L}^{2}+20\pi ^{2})}\sqrt{\frac{2}{\pi ^{3}}}\left[ \sqrt{(176\omega^{3}+20\omega )+\frac{(14\lambda \overline{L}^{2}+5\pi ^{2})}{4\pi ^{2}}} -(176\omega^{3}+20\omega )\right],\end{equation}
then the required inequality is true.  The stated exponential decay follows.\\  
 \end{proof}

\begin{corollary} \label{C:expdecay}
Let $\alpha :\left [ -1, 1 \right ]\times \left [ 0,  \infty\right )\to \mathbb{R}^{2}$ be a length-penalised elastic flow with conditions as per Theorem \ref{T:main1}. Then for all $\ell \in \mathbb{N}$, there exist corresponding $\delta_\ell >0$ and $\tilde C_\ell >0$ such that
  \begin{equation}
 \int _{\alpha }k_{s^\ell}^2\,ds\leq \tilde C_\ell  \, e^{-\delta_\ell t} \mbox{.}  
  \end{equation} 
\end{corollary}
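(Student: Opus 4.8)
The plan is to bootstrap from the exponential decay of $\|k_s\|_2^2$ recorded in \eqref{exc1} --- which is already the case $\ell=1$ of the Corollary, since $L\geq\underline L>0$ --- together with the uniform-in-time bounds of Proposition \ref{bd}. The one genuine subtlety is that, in contrast to the free elastic flow after rescaling (or to the curve diffusion flow), here $\int_\alpha k^2\,ds$ does \emph{not} tend to zero: it converges to $2\lambda$ times the length of the limiting arc $\alpha_\infty$. Consequently the interpolation inequality of Proposition \ref{p}, which is phrased in terms of $\|k\|_{0,2}$, cannot be fed into the evolution equation Corollary \ref{T:evlneqns2}(vi) to close the estimate --- doing so would leave non-decaying remainder terms. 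I would instead interpolate ``about $k_s$'' rather than ``about $k$''.

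Concretely, I would first note that for every $\ell\geq1$, integrating by parts once,
\[
\int_\alpha k_{s^\ell}^2\,ds \;=\; -\int_\alpha k_{s^{\ell-1}}\,k_{s^{\ell+1}}\,ds
\;\leq\; \Bigl(\int_\alpha k_{s^{\ell-1}}^2\,ds\Bigr)^{\!1/2}\Bigl(\int_\alpha k_{s^{\ell+1}}^2\,ds\Bigr)^{\!1/2},
\]
the boundary term vanishing because, among the consecutive integers $\ell-1$ and $\ell$, exactly one is odd, and the corresponding odd-order derivative of $k$ vanishes at $\pm1$ by Lemma \ref{T:BCs} (for $\ell=1$ this is \eqref{E:NBC2}). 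Writing $b_\ell:=\int_\alpha k_{s^\ell}^2\,ds$, this is precisely the statement that $\ell\mapsto\log b_\ell$ is convex on $\mathbb N\cup\{0\}$; interpolating between $b_1$ and $b_{2\ell-1}$, whose midpoint is $\ell$, I then obtain for each $\ell\geq2$
\[
\int_\alpha k_{s^\ell}^2\,ds \;\leq\; \Bigl(\int_\alpha k_s^2\,ds\Bigr)^{\!1/2}\Bigl(\int_\alpha k_{s^{2\ell-1}}^2\,ds\Bigr)^{\!1/2}
\]
(if some $b_j$ vanishes the bound is trivial; otherwise the log-convexity argument applies verbatim).

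To conclude, I would insert the bounds $\int_\alpha k_{s^{2\ell-1}}^2\,ds\leq C_{2\ell-1}^2$ from Proposition \ref{bd} and $\int_\alpha k_s^2\,ds\leq\tilde C_1 e^{-\delta_1 t}$ from the base case, giving
\[
\int_\alpha k_{s^\ell}^2\,ds \;\leq\; \tilde C_1^{1/2}\,C_{2\ell-1}\,e^{-\frac{\delta_1}{2}t}\;=:\;\tilde C_\ell\,e^{-\delta_\ell t},\qquad \delta_\ell:=\tfrac{\delta_1}{2},
\]
with $\tilde C_\ell$ depending only on $\alpha_0$ and $\delta_\ell$ only on $\omega$ and $\underline L$. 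I do not anticipate a serious obstacle beyond the one already flagged: the failure of $\int_\alpha k^2\,ds$ to decay is exactly what rules out the naive route through Proposition \ref{p}, and it is sidestepped by running the interpolation through $k_s$; the remaining ingredients --- Lemma \ref{T:BCs}, Cauchy--Schwarz, and Proposition \ref{bd} --- are routine.
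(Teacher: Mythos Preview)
Your argument is correct and rests on the same three ingredients as the paper's proof: the base case \eqref{exc1}, the integration-by-parts identity $\int k_{s^\ell}^2\,ds=-\int k_{s^{\ell-1}}k_{s^{\ell+1}}\,ds$ (boundary terms vanishing by Lemma~\ref{T:BCs}) followed by Cauchy--Schwarz, and the uniform bounds of Proposition~\ref{bd}. The paper packages this as a one-step induction, passing from $\ell$ to $\ell+1$ via $b_{\ell+1}\le b_\ell^{1/2}b_{\ell+2}^{1/2}$ and absorbing $b_{\ell+2}$ with Proposition~\ref{bd}; this yields $\delta_{\ell+1}=\delta_\ell/2$, hence $\delta_\ell=\delta_1/2^{\ell-1}$. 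You instead recognise the single-step inequality as log-convexity of $\ell\mapsto b_\ell$ and interpolate directly between the endpoints $1$ and $2\ell-1$, obtaining the uniform rate $\delta_\ell=\delta_1/2$ for every $\ell\ge 2$. This is a genuine quantitative improvement: the paper's Remark following the Corollary explicitly notes that its $\delta_\ell$ degrade and appeals to a separate linearisation argument to recover a uniform exponent, whereas your route gives one for free. Your aside about why the evolution equation of Corollary~\ref{T:evlneqns2}(vi) together with Proposition~\ref{p} does not close directly (because $\int_\alpha k^2\,ds\to 2\lambda L_\infty\ne 0$) is also a correct and useful observation, though neither proof actually attempts that route.
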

\begin{proof}
This is proved by a standard induction argument.  The base case $\ell=1$ is as in  \eqref{exc1}.  So assume that
 $$\int _{\alpha }k_{s^\ell}^{2}\,ds\leq \tilde C_\ell e^{-\delta_\ell t}$$ for some $\tilde{C_\ell}>0$.  Integrating by parts and using the property at the boundaries, Corollary \ref{T:BCs}, we estimate using the H\"{o}lder inequality
$$\int _{\alpha }k_{s^{\ell+1}}^2\,ds=-\int _{\alpha }k_{s^{\ell+2}}\,k_{s^{\ell}}\,ds\leq \left ( \int _{\alpha }k_{s^{\ell+2}}^2\,ds \right )^{\frac{1}{2}}\left ( \int _{\alpha }k_{s^{\ell}}^2\,ds \right )^{\frac{1}{2}}$$
 The inductive assumption, Proposition \ref{bd} and Lemma \ref{2k}  yields $$\int _{\alpha }k_{s^{\ell+1}}^2\,ds\leq C_{\ell+2}\, \tilde{C_\ell}^\frac{1}{2}e^{-\frac{\delta_\ell}{2} t}$$
 Taking $\tilde C_{\ell+1} = C_{\ell+2} \tilde C_\ell^{\frac{1}{2}}$ and $\delta_{\ell+1} = \frac{\delta_\ell}{2}$ completes the proof. 
\end{proof}

\begin{remark} As usual, in the above induction argument, the constants $\delta_\ell$ are getting progressively smaller.  To obtain exponential decay of all curvature derivatives in $L^2$ with the same constant, one can use a standard linearisation argument about the limiting curve.  One then has all curvature derivatives decaying pointwise again using Proposition \ref{psw} and the length bound.\\
\end{remark}

Let us also note using Proposition \ref{psw} with Lemma \ref{2k} we obtain
$$\int _{\alpha }\left ( k-\bar{k} \right )^{2}\,ds\leq \frac{L^{2}}{\pi ^{2}}\int _{\alpha }k_{s}^{2}\,ds\leq \frac{\overline{L}^{2}}{\pi ^{2}}\int _{\alpha }k_{s}^{2}\,ds\leq c\,e^{-\delta t}$$ and,
$$\left\|k-\bar{k} \right\|_{\infty }^{2}\leq \frac{2L}{\pi }\int _{\alpha }k_{s}^{2}\,ds\leq \frac{2\overline{L}}{\pi }\int _{\alpha }k_{s}^{2}\,ds\leq c\,e^{-\delta t}$$ 

\begin{proof}[Completion of the proof of Theorem~{\upshape\ref{T:main1}}]
Given that $T= \infty$ and the curvature $k$ decays pointwise exponentially to its average, we know precisely by virtue of the boundary conditions the limiting circular arc.  Smooth exponential convergence of the embedding map $\alpha\left( \cdot, t\right)$ to that of the circular arc now follows similarly as in \cite{WW}, also as in \cite{GM24}.  The change to the latter argument is that we do not have constant area bounded by the curve and the sides of the cone in this setting.  Instead, we have since $T=\infty$ and the energy is monotone decreasing and bounded below by zero, at least a subsequence of times with convergence to a critical point of the energy.  We also have in the limit $k_{ss}=0$ so this critical point is exactly the circular arc with limiting radius $r_\infty = \frac{1}{\sqrt{2\lambda}}$.  Smooth exponential convergence at the level of curvature then follows by our previous estimates.  Control of the tangent vector and then spatial parameter derivatives and mixed derivatives of the embedding map then follow as in \cite{GM24} for example.  We conclude exponential convergence in the $C^\infty$-topology to the circular arc. This completes the proof.
\end{proof}
\section{Length-constrained elastic flow $\lambda (t)$} \label{S:lf}
In this section, we define an energy with $\lambda$ chosen as a function of time that ensures the length of the evolving curve is fixed.  In other words, the gradient flow of
\begin{equation}\label{E2}
E_0\left [ \alpha_t  \right ]=\frac{1}{2}\int _{\alpha }k^{2}ds,    
\end{equation} 
subject to fixed length $L\left[ \alpha_t\right] \equiv L_0$.  The corresponding flow equation is 
\begin{equation}\label{eq113}
\frac{\partial \alpha }{\partial t}=(  k_{ss}+\frac{1}{2}k^{3}-\lambda(t) k)\nu \mbox{,}
\end{equation}
and \begin{equation}\label{h1}
\lambda\left ( t \right ):=\displaystyle{\frac{-\int _{\alpha }k_{s}^{2}ds+\frac{1}{2}\int _{\alpha }k^{4}ds}{\int _{\alpha }k^{2}ds}}    
\end{equation}
ensures $L[\alpha_t] = L[\alpha_0] =: L_0$ for all $t$, as can be checked by direct calculation (Lemma \ref{T:evlneqns48}).\\

Again, short-time existence of a solution to the flow equation \eqref{eq113} is well known, and 
 subconvergence to an elastica follows in the usual way (see, for example, \cite{DLP17}, notwithstanding the slightly different boundary conditions).\\
\begin{lemma} \label{T:evlneqns48}
Under the flow \eqref{eq113}, while a solution exists, the length is constant  
$L\left [ \alpha_{t}  \right ]=L_{0}.$
\end{lemma}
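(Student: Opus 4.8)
The plan is to verify directly that the choice \eqref{h1} makes $\frac{\mathrm{d}}{\mathrm{d}t}L[\alpha_t]=0$. I would start from Lemma~\ref{T:evlneqns}(ii), which gives $\frac{\mathrm{d}}{\mathrm{d}t}L = -\int_\alpha k\,F\,ds$, and substitute the speed for the length-constrained flow \eqref{eq113}, namely $F = -k_{ss} - \frac12 k^3 + \lambda(t)\,k$. This yields
\begin{equation*}
\frac{\mathrm{d}}{\mathrm{d}t}L = -\int_\alpha k\left(-k_{ss} - \tfrac12 k^3 + \lambda(t)\,k\right)ds = \int_\alpha k\,k_{ss}\,ds + \tfrac12\int_\alpha k^4\,ds - \lambda(t)\int_\alpha k^2\,ds.
\end{equation*}

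The next step is to integrate the first term by parts. Using the boundary condition \eqref{E:NBC2}, equivalently Lemma~\ref{T:BCs} with $\ell=1$, we have $k_s(\pm 1,t)=0$, so the boundary terms vanish and $\int_\alpha k\,k_{ss}\,ds = -\int_\alpha k_s^2\,ds$. Hence
\begin{equation*}
\frac{\mathrm{d}}{\mathrm{d}t}L = -\int_\alpha k_s^2\,ds + \tfrac12\int_\alpha k^4\,ds - \lambda(t)\int_\alpha k^2\,ds.
\end{equation*}
Substituting the definition \eqref{h1} of $\lambda(t)$, the right-hand side becomes $-\int_\alpha k_s^2\,ds + \tfrac12\int_\alpha k^4\,ds - \left(-\int_\alpha k_s^2\,ds + \tfrac12\int_\alpha k^4\,ds\right) = 0$, provided $\int_\alpha k^2\,ds \neq 0$, which holds because $\int_\alpha k\,ds = 2\pi\omega = \theta_1 - \theta_2 > 0$ forces $k$ to be nontrivial (by Cauchy–Schwarz, $\int_\alpha k^2\,ds \geq (\theta_1-\theta_2)^2/L > 0$). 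Therefore $L[\alpha_t]$ is constant in $t$, and since the flow exists at least for a short time with $L[\alpha_0] =: L_0$, we conclude $L[\alpha_t] = L_0$ for as long as the solution exists.

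There is essentially no obstacle here: the only subtlety worth flagging is that $\lambda(t)$ is well-defined precisely because $\int_\alpha k^2\,ds$ stays positive, which is guaranteed by the fixed rotation number (Corollary~\ref{T:omega}) together with the length bound; the computation is otherwise an elementary integration by parts using the no-curvature-flux boundary condition. One could alternatively phrase the argument as: the length-constrained flow is designed so that $F$ is the projection of the negative elastic gradient $-k_{ss} - \tfrac12 k^3$ onto the $L^2$-orthogonal complement of the length gradient $-k\,\nu$ (in the sense that $\int_\alpha k\,F\,ds = 0$), which is exactly the content of the displayed cancellation above.
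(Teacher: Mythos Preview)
Your proof is correct and follows essentially the same approach as the paper's: both compute $\frac{\mathrm{d}}{\mathrm{d}t}L = -\int_\alpha k_s^2\,ds + \tfrac12\int_\alpha k^4\,ds - \lambda(t)\int_\alpha k^2\,ds$ and observe this vanishes by the definition of $\lambda(t)$. You make the integration by parts explicit and add the useful remark that $\int_\alpha k^2\,ds>0$ (so $\lambda(t)$ is well-defined), while the paper simply states the formula and invokes \eqref{h1}.
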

\begin{proof}
By differentiating the length in time, we get
$$\frac{\mathrm{d} }{\mathrm{d} t}L\left [ \alpha_{t}  \right ]=- \int_{\alpha }kF~ds=-\int_{\alpha }k_{s}^{2}ds+\frac{1}{2}\int_{\alpha }k^{4}ds-\lambda (t)\int_{\alpha }k^{2}~ds = 0,$$ 
where in the last step we have used \eqref{h1}.  The result follows.      
\end{proof}
As shown in \cite{DLP17}, the flow \eqref{eq113} has the following important property.\\

\begin{lemma} \label{T:EE}
Under the flow \eqref{eq113}, the original elastic energy $E_0\left[ \alpha_t\right] =\frac{1}{2} \int_\alpha k^2 ds$ is non-increasing.\\
\end{lemma}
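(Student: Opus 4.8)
The plan is to differentiate $E_0[\alpha_t]=\tfrac12\int_\alpha k^2\,ds$ in time and use the fact that the multiplier $\lambda(t)$ in \eqref{h1} is engineered precisely so that the length is conserved, which forces the extra term in the flow to be $L^2$-orthogonal to the speed. First I would match \eqref{eq113} to the convention $\partial_t\alpha=-F\,\nu$ of \eqref{f}, so that $F=-k_{ss}-\tfrac12 k^3+\lambda(t)\,k$ (this is exactly speed (2) in the introduction) and Lemma \ref{T:evlneqns} applies verbatim. By part (iv) of that lemma,
\[
\frac{\mathrm{d}}{\mathrm{d}t}E_0[\alpha_t]=\int_\alpha\left(k_{ss}+\tfrac12 k^3\right)F\,ds ,
\]
where the integrations by parts implicit in Lemma \ref{T:evlneqns}(iv) produce no boundary terms, thanks to the no-curvature-flux condition \eqref{E:NBC2} together with Lemma \ref{T:BCs}.

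Next I would substitute $k_{ss}+\tfrac12 k^3=-F+\lambda(t)\,k$ into the integrand, obtaining
\[
\frac{\mathrm{d}}{\mathrm{d}t}E_0[\alpha_t]=-\int_\alpha F^2\,ds+\lambda(t)\int_\alpha k\,F\,ds .
\]
The key step is to observe that the second term vanishes: by Lemma \ref{T:evlneqns}(ii) and Lemma \ref{T:evlneqns48}, $0=\tfrac{\mathrm{d}}{\mathrm{d}t}L[\alpha_t]=-\int_\alpha k\,F\,ds$, so $\int_\alpha k\,F\,ds=0$. Hence $\tfrac{\mathrm{d}}{\mathrm{d}t}E_0[\alpha_t]=-\int_\alpha F^2\,ds\le 0$, which is the assertion; moreover equality holds only at stationary curves.

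There is essentially no obstacle here: the only points requiring care are the sign bookkeeping in passing between \eqref{eq113} and the convention $\partial_t\alpha=-F\nu$, and checking that all boundary terms drop out. An alternative route, avoiding the reference to length conservation, is to note directly from \eqref{h1} that $\int_\alpha(k_{ss}+\tfrac12 k^3)k\,ds=\lambda(t)\int_\alpha k^2\,ds$ and then apply the Cauchy–Schwarz inequality to get $\lambda(t)^2\int_\alpha k^2\,ds\le\int_\alpha(k_{ss}+\tfrac12 k^3)^2\,ds=\int_\alpha F^2\,ds+2\lambda(t)\int_\alpha kF\,ds-\lambda(t)^2\int_\alpha k^2\,ds$, which again closes the sign; but the argument via $\int_\alpha kF\,ds=0$ is the cleaner one and the one I would present.
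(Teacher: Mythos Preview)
Your main argument is correct and is exactly the standard computation: the paper itself does not give a proof here but simply cites \cite{DLP17}, and the derivation you outline (use Lemma~\ref{T:evlneqns}(iv), write $k_{ss}+\tfrac12 k^3=-F+\lambda(t)k$, and kill the cross term via $\int_\alpha kF\,ds=0$ from the length constraint) is precisely how that result is obtained. One small slip in your aside: expanding $(k_{ss}+\tfrac12 k^3)^2=(-F+\lambda k)^2$ gives $\int F^2\,ds-2\lambda\int kF\,ds+\lambda^2\int k^2\,ds$, not the signs you wrote; with the correct signs Cauchy--Schwarz still closes (indeed one gets directly $\tfrac{\mathrm d}{\mathrm dt}E_0=-\int(k_{ss}+\tfrac12 k^3)^2\,ds+\lambda^2\int k^2\,ds\le 0$), but your first route is the one to keep.
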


In view of Lemma \ref{T:EE} and the length constraint, we have that solutions to this flow also exist for all time, $T=\infty$.

For the sake of completeness, we now provide the additional argument needed to see that all curvature derivatives are bounded in $L^2$ under the flow \eqref{eq113}.
\begin{proof}[Proof of Proposition~{\upshape\ref{bd}}, $\lambda(t)$ case]
Using Lemma \ref{T:evlneqns2} (vi), we obtain 
\begin{align}\label{b332}
\frac{\mathrm{d} }{\mathrm{d} t}\frac{1}{2}\int _{\alpha }k_{s^{l}}^{2}\,ds\nonumber&=-\int _{\alpha }k_{s^{l+2}}^{2}\,ds-\lambda \left ( t \right ) \int _{\alpha }k_{s^{l+1}}^{2}\,ds+\lambda \left ( t \right ) \int _{\alpha }P_{4}^{2l}(k)\,ds+\int _{\alpha }P_{4}^{2l+2}(k)\,ds\nonumber\\ &\quad+\int _{\alpha }P_{6}^{2l}(k)\,ds\nonumber\\&\leq -(1-3\varepsilon)\int _{\alpha }k_{s^{l+2}}^{2}\,ds-\lambda \left ( t \right ) \int _{\alpha }k_{s^{l+1}}^{2}\,ds+\lambda \left ( t \right ) \int _{\alpha }P_{4}^{2l}(k)\,ds\nonumber\\&\quad + c \varepsilon ^{-(2\ell+3)}\left (\int _{\alpha }k^{2}\,ds  \right )^{2\ell+5}+c\left (\int _{\alpha }k^{2}\,ds  \right )^{2\ell+5}\nonumber\\&\quad +c \varepsilon ^{-(\ell+1)}\left (\int _{\alpha }k^{2}\,ds  \right )^{2\ell+5}+c\left (\int _{\alpha }k^{2}\,ds  \right )^{2\ell+5}.
\end{align} 
As in the proof of \cite{DKS02}[Theorem 3.3] we may estimate using Proposition \ref{p}, Young's inequality and the H\"{o}lder inequality that
$$\left| \lambda(t)\right| \leq c_\ell\left( \Lambda\right) \left( \left\|k_{s^{\ell+2}} \right\|_{2}^{\frac{2}{\ell+2}} + 1\right)$$
and so
$$\lambda(t) \int_\alpha P_4^{2\ell}\left( k \right) ds \leq \varepsilon \int_\alpha k_{s^{\ell+2}}^2 ds + c_\ell\left( \Lambda, \varepsilon\right) \mbox{.}$$

For the term $-\lambda \left ( t \right )\int _{\alpha }k_{s^{\ell+1}}^{2}\,ds$ we may proceed as in \cite{DKS02}, noting that the method there will work with minimal adjustments given our boundary conditions, as compared with other boundary conditions where different approaches are required (eg \cite{DLP14}).  Using Lemma \ref{T:divform} we can rewrite equation \eqref{eq113} as
$$\frac{\partial \alpha}{\partial t} = + \partial_s\left( k_s \nu + \frac{1}{2} k^2 \tau \right) - \lambda(t) \, k\, \nu \mbox{.}$$
Taking the inner product with $\alpha$ we consider
\begin{equation} \label{E:IP}
  \left< \frac{\partial \alpha}{\partial t}, \alpha \right> = \left< \partial_s \left( k_s \nu - \frac{1}{2} k^2 \tau \right), \alpha \right> + \lambda(t) \left< \kappa, \alpha \right> \mbox{.}
\end{equation}
Observe by the Fundamental Theorem of Calculus
$$\int_\alpha \partial_s \left< k_s \nu - \frac{1}{2} k^2 \tau, \alpha \right> ds = \left[ \left< k_s \nu - \frac{1}{2} k^2 \tau, \alpha \right> \right]_{\partial \alpha} \mbox{.}$$
The right hand side is equal to zero by the no-curvature-flux and Neumann boundary conditions, \eqref{E:NBC1} and \eqref{E:NBC2}.  Hence we have the `integration by parts'
$$\int_\alpha \left< \partial_s \left( k_s \nu - \frac{1}{2} k^2 \tau \right), \alpha \right> ds = - \int_\alpha \left<  k_s \nu - \frac{1}{2} k^2 \tau, \tau \right> ds =  \frac{1}{2} \int_\alpha k^2 ds$$
and the integration of \eqref{E:IP} yields with further simple estimates
$$E_0\left[ \alpha_t \right] - \lambda \, L_0 = \int_\alpha \left< \frac{\partial \alpha}{\partial t}, \alpha \right> ds \leq \int_\alpha \left| \frac{\partial \alpha}{\partial t} \right| \left| \alpha \right| ds \leq c(\omega) \left\| \frac{\partial \alpha}{\partial t} \right\|_{L^2} L_0^{\frac{3}{2}}\mbox{.}$$
Note that using elementary trigonometry, the above $c(\omega)$ may be taken explicitly as $\frac{1}{2} \tan\left( \pi \omega\right)$.
It follows that
$$-\lambda(t) \leq c\left( \omega\right) L_0^{\frac{1}{2}} \left\| \frac{\partial \alpha}{\partial t} \right\|_{L^2}$$
from which the argument now follows as in \cite{DKS02} that
$$\frac{\mathrm{d} }{\mathrm{d} t} \int_\alpha k_{s^\ell}^2 ds + c_0 \int_\alpha k_{s^\ell}^2 ds \leq c_\ell(\Lambda) \left[ 1 + \lambda^-(t)^2 \int_\alpha k_{s^\ell}^2 ds \right]$$
where $\lambda^-(t) = - \min\left\{ \lambda(t), 0 \right\}$.  The bound on $\int_\alpha k_{s^\ell}^2 ds$ then follows using Gr\"{o}nwall's inequality.
\end{proof}

We have the following main result for this section.\\

 \begin{theorem} \label{T:main2}
Let $\alpha_0: \left[ -1, 1\right]\to \mathbb{R}^{2}$ be compatible with \eqref{E:NBC1} and \eqref{E:NBC2} be a given initial curve of length $L_0$.  Under the assumption that neither end of the evolving curve reaches the cone tip, the length-constrained elastic flow \eqref{eq113} with generalised Neumann boundary conditions \eqref{E:NBC1} and \eqref{E:NBC2} has a unique solution $\alpha\left( \cdot, t\right)$ that exists for all time.  In the case that the supporting cone satisfies
\begin{equation} \label{E:o2}
  \omega < \sqrt[4]{\frac{15}{6592}} \approx 0.22
  \end{equation}
and $\alpha_0$ satisfies a smallness condition of the form
$$\left\| k_s \right\|_2 < c\left( \omega, L_0 \right),$$
then the solution curves converge smoothly and exponentially in the $C^\infty$-topology to the circular arc $\alpha_\infty$ of length $L_0$ centred at the cone tip.\\
\end{theorem}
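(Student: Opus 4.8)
The plan is to run the argument of Section~\ref{S:lp} almost verbatim, the one genuinely new feature being that $\lambda=\lambda(t)$ is neither constant nor a priori of one sign, and so must be pinned along the flow. The statement that $T=\infty$ for any admissible $\alpha_0$ whose ends stay away from the tip is already in hand: $E_0$ is non-increasing (Lemma~\ref{T:EE}), the length is frozen at $L_0$ (Lemma~\ref{T:evlneqns48}), Proposition~\ref{bd} (proved above in the $\lambda(t)$ case) bounds every $\int_\alpha k_{s^\ell}^2\,ds$ on $[0,T)$, and the usual restart-and-contradict argument then excludes $T<\infty$. The structural simplification compared with the case $\lambda>0$ is that here $L\equiv L_0$ and, by Corollary~\ref{T:omega}, $\int_\alpha k\,ds$ is constant, so $\bar{k}=2\pi\omega/L_0$ is a \emph{fixed} constant; the unique circular arc of length $L_0$ compatible with the boundary conditions is centred at the tip with radius $L_0/(2\pi\omega)$ and curvature $\bar{k}$, and the stationary condition $\tfrac12k^3-\lambda k=0$ forces the limiting multiplier to equal $\tfrac12\bar{k}^2$.

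First I would record the $\lambda(t)$-analogue of Corollary~\ref{T:evlneqns2}(v), which is identical with $\lambda$ replaced by $\lambda(t)$, namely
\[
\frac{\mathrm d}{\mathrm dt}\int_\alpha k_s^2\,ds=-2\int_\alpha k_{s^3}^2\,ds+5\int_\alpha k^2k_{ss}^2\,ds-\frac53\int_\alpha k_s^4\,ds-\frac{11}{2}\int_\alpha k^4k_s^2\,ds+7\lambda(t)\int_\alpha k^2k_s^2\,ds-2\lambda(t)\int_\alpha k_{ss}^2\,ds.
\]
The key new step is to control $\lambda(t)$. Writing $v=k-\bar{k}$, so $\int_\alpha v\,ds=0$, one gets from \eqref{h1} that $\int_\alpha k^2\,ds\geq\bar{k}^2L_0>0$ and
\[
\lambda(t)-\tfrac12\bar{k}^2=\frac{-\int_\alpha k_s^2\,ds+\tfrac12\!\left(\int_\alpha v^4\,ds+4\bar{k}\int_\alpha v^3\,ds+5\bar{k}^2\int_\alpha v^2\,ds\right)}{\int_\alpha k^2\,ds},
\]
so Propositions~\ref{psw} and \ref{eq266} bound $\bigl|\lambda(t)-\tfrac12\bar{k}^2\bigr|$ by a constant $C(\omega,L_0)$ times powers of $\|k_s\|_2$; in particular $\lambda(t)>0$ once $\|k_s\|_2$ is small. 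Substituting $\lambda(t)=\tfrac12\bar{k}^2+\mu(t)$ and running the $k=v+\bar{k}$ bookkeeping of Lemma~\ref{1k}, the terms free of $v$ and $\mu$ combine as $(-\tfrac{11}{2}+\tfrac72)\bar{k}^4\int_\alpha k_s^2\,ds=-2\bar{k}^4\int_\alpha k_s^2\,ds$; the positive $5\bar{k}^2\int_\alpha k_{ss}^2\,ds$ is absorbed via integration by parts and Young's inequality into $b_0\int_\alpha k_{s^3}^2\,ds+\tfrac{25}{4b_0}\bar{k}^4\int_\alpha k_s^2\,ds$; the genuinely negative $-2\lambda(t)\int_\alpha k_{ss}^2\,ds\approx-\bar{k}^2\int_\alpha k_{ss}^2\,ds$ is turned, via Proposition~\ref{psw} and the identity $\pi^2/L_0^2=\bar{k}^2/(4\omega^2)$, into $-\tfrac{\bar{k}^4}{4\omega^2}\int_\alpha k_s^2\,ds$; and every remaining term carries a factor $v$ or $\mu(t)$ and is dominated, exactly as in the proof of Lemma~\ref{2k} (Propositions~\ref{psw}--\ref{eq266} and the PSW iterations $\int_\alpha k_s^2\,ds\le\tfrac{L_0^4}{\pi^4}\int_\alpha k_{s^3}^2\,ds$, valid by Lemma~\ref{T:BCs}), by $\|k_s\|_2$ times $\int_\alpha k_{s^3}^2\,ds$ plus a controlled multiple of $\int_\alpha k_s^2\,ds$. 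Choosing $b_0\in(0,2)$ (so the $\int_\alpha k_{s^3}^2\,ds$ coefficient is positive once $\|k_s\|_2$ is small) this yields an inequality of the shape
\[
\frac{\mathrm d}{\mathrm dt}\int_\alpha k_s^2\,ds\le-A\bigl(\omega,L_0,\|k_s\|_2\bigr)\int_\alpha k_{s^3}^2\,ds+\Bigl[\bigl(\tfrac{25}{4b_0}-2-\tfrac{1}{4\omega^2}\bigr)\bar{k}^4+B(\omega,L_0)\,\|k_s\|_2\Bigr]\int_\alpha k_s^2\,ds,
\]
and the bracketed coefficient is $\le-\tfrac{13}{6}\bar{k}^4$ precisely when $\omega$ is as small as \eqref{E:o2} and $\|k_s\|_2\le c(\omega,L_0)$, where $c(\omega,L_0)$ is obtained, just as in Lemma~\ref{2k}, as the smallest positive root of the relevant quadratic in $\|k_s\|_2$.

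From here the conclusion follows exactly as in Section~\ref{S:lp}. A continuity/bootstrap argument shows the smallness $\|k_s\|_2\le c(\omega,L_0)$ is preserved: the set of such times is closed, contains $t=0$ by hypothesis, and on it $\int_\alpha k_s^2\,ds$ is strictly decreasing, hence it is all of $[0,\infty)$; consequently $\tfrac{\mathrm d}{\mathrm dt}\int_\alpha k_s^2\,ds\le-\tfrac{13}{6}\bar{k}^4\int_\alpha k_s^2\,ds$ and $\int_\alpha k_s^2\,ds$ decays exponentially. The induction of Corollary~\ref{C:expdecay} (integration by parts using Lemma~\ref{T:BCs}, together with Proposition~\ref{bd}) upgrades this to exponential $L^2$-decay of every $k_{s^\ell}$, and since $L\equiv L_0$, Propositions~\ref{psw}--\ref{eq266} give exponential pointwise decay of $k-\bar{k}$ and of all curvature derivatives. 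In the limit $k\equiv\bar{k}$ and $k_{ss}\equiv0$, so the limit curve is the circular arc of curvature $\bar{k}$; being a circular arc meeting both sides of the cone perpendicularly it is centred at the tip, and since length is preserved it has length $L_0$, i.e. it is $\alpha_\infty$. Smooth exponential convergence of the embedding $\alpha(\cdot,t)$ to that of $\alpha_\infty$ in the $C^\infty$-topology then follows as in \cite{WW} and \cite{GM24}: once curvature and its derivatives decay exponentially, so does the tangent vector, and then the spatial parameter derivatives and mixed space--time derivatives of the map.

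The main obstacle is precisely the control of $\lambda(t)$: it is not constant and need not be positive for general initial data, so one cannot directly reuse the estimates of Section~\ref{S:lp} that exploited $\lambda>0$ (in particular keeping $-2\lambda\int_\alpha k_{ss}^2\,ds$ negative via Proposition~\ref{psw}); the remedy is the simultaneous bootstrap in which smallness of $\|k_s\|_2$ and the bound $\lambda(t)\approx\tfrac12\bar{k}^2>0$ are propagated together. A secondary subtlety is that the term $7\lambda(t)\int_\alpha k^2k_s^2\,ds$, which was $O(\lambda)$-small in the penalised case, here contributes a genuine $+\tfrac72\bar{k}^4\int_\alpha k_s^2\,ds$ of the same order as the stabilising $-\tfrac{11}{2}\bar{k}^4\int_\alpha k_s^2\,ds$; it is the cone-angle restriction \eqref{E:o2}, acting through the $-\bar{k}^4/(4\omega^2)$ contribution of $-2\lambda(t)\int_\alpha k_{ss}^2\,ds$, that keeps the net coefficient negative.
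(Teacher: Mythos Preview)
Your approach is correct and proves the theorem, but it takes a genuinely different route from the paper. The paper (Lemma~\ref{T:lambdabounds} and Corollary~\ref{sk}) bounds $\lambda(t)$ crudely from above and below and treats $-2\lambda(t)\int_\alpha k_{ss}^2\,ds$ as a \emph{bad} term: using only the lower bound $\lambda(t)\ge -L_0\|k_s\|_2^2/(2\pi\omega)^2$ one has $-2\lambda(t)\int_\alpha k_{ss}^2\,ds\le C\|k_s\|_2^2\int_\alpha k_{s^3}^2\,ds$, which is absorbed into the $\int_\alpha k_{s^3}^2\,ds$ coefficient. With that choice and $b_0=\tfrac18$, the paper is left with a \emph{positive} $+\tfrac{103}{2}\bar{k}^4\int_\alpha k_s^2\,ds$, which is then also pushed into $\int_\alpha k_{s^3}^2\,ds$ via PSW ($\bar{k}^4\int_\alpha k_s^2\,ds\le(2\omega)^4\int_\alpha k_{s^3}^2\,ds$); the resulting competition $\tfrac{15}{8}>\tfrac{103}{2}(2\omega)^4$ is exactly the origin of \eqref{E:o2}. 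You instead decompose $\lambda(t)=\tfrac12\bar{k}^2+\mu(t)$ with $|\mu|=O(\|k_s\|_2^2)$ and exploit $-2\lambda\int_\alpha k_{ss}^2\,ds\approx-\bar{k}^2\int_\alpha k_{ss}^2\,ds\le-\tfrac{\bar{k}^4}{4\omega^2}\int_\alpha k_s^2\,ds$ as a \emph{stabilising} contribution.

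Your route is arguably cleaner and actually yields a less restrictive cone-angle threshold: with $b_0=\tfrac{15}{8}$ the condition $\tfrac{25}{4b_0}-2-\tfrac{1}{4\omega^2}\le-\tfrac{13}{6}$ reads $\omega\le 1/\sqrt{14}\approx0.27$, and taking $b_0$ close to $2$ relaxes this further. So the phrase ``precisely when $\omega$ is as small as \eqref{E:o2}'' is not accurate --- your computation does not produce the number $\sqrt[4]{15/6592}$, though \eqref{E:o2} certainly suffices. Likewise, the explicit smallness bound $c(\omega,L_0)$ your argument produces is not the quartic \eqref{E:scd}; it is a different (and, modulo bookkeeping, less restrictive) polynomial root. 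The endgame (induction \`a la Corollary~\ref{C:expdecay}, pointwise decay via PSW, convergence of the embedding) is the same in spirit; the paper invokes a stability argument in the style of \cite{MWY}, while you cite \cite{WW} and \cite{GM24}, which amounts to the same standard machinery.
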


\begin{remarks}
\begin{enumerate}[1.]
\item It is easy to check that any circular arc centred at the cone tip is stationary under \eqref{eq113}.  Given that the length of any evolving curve is constant under the flow \eqref{eq113}, the particular circular arc to which a solution converges must be the one with the same length $L_0$ as that of the initial curve $\alpha_0$.  This unique limiting circular arc for given $\alpha_0$ has radius $r_{0}=\frac{L_{0}}{2\pi \omega }.$
\item  The restriction on $\omega$ amounts to a cone angle of not more than about $79^o$.
\item The smallness condition $c\left( \omega, L_0 \right)$ on $\left\| k_s\right\|_2$ is precisely the smallest positive root of the quartic 
\begin{multline} \label{E:scd}
 \delta -\frac{28 L_{0}^{6}}{\pi ^{6}}\left\|k_{s} \right\|_{2}^{4}-14\, (2)^\frac{5}{2}\,\omega \left ( \frac{L_0}{\pi } \right )^\frac{9}{2}\left\|k_{s} \right\|_{2}^{3}-\left ( 10+\frac{2}{\pi (2\omega )^2} +22(2\omega )^{2}\right )\frac{L_{0}^{3}}{\pi ^{3}}\left\| k_{s}\right\|_{2}^{2}\\
 -\left (\left ( 22(2\omega )^{3}+10(2\omega ) \right )\sqrt{\frac{2L_{0}^{3}}{\pi ^{3}}}+\frac{14(2\omega )^{3}L_0}{\pi }\right )\left\| k_{s}\right\|_{2} 
\end{multline}
where
$$\delta = \frac{15}{8} - \frac{103}{2} (2\omega)^4 >0$$
since $\omega$ satisfies \eqref{E:o2}.\\
\end{enumerate}    
\end{remarks}

We begin by bounding $\lambda(t)$ above and below in terms of $\int_\alpha k_s^2 ds$, the quantity that we will soon show, under conditions of Theorem \ref{T:main2}, remains small.\\

\begin{lemma} \label{T:lambdabounds}
Under the flow \eqref{eq113},
$$-\frac{L_0 \int_\alpha k_s^2 ds}{\left( 2 \pi\omega\right)^2} \leq \lambda(t) \leq \frac{2\, L_0}{\pi} \int_\alpha k_s^2 ds + \overline{k}^2 \mbox{.}$$
\end{lemma}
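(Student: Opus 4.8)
The plan is to work directly from the explicit formula \eqref{h1} for $\lambda(t)$, splitting the estimate into its lower and upper halves and controlling the numerator and denominator separately. Throughout I would use that, along the flow \eqref{eq113}, the length is constant, $L = L_0$ (Lemma \ref{T:evlneqns48}), and the rotation number is preserved, $\int_\alpha k\,ds = 2\pi\omega$ (Corollary \ref{T:omega}); in particular $\overline{k} = 2\pi\omega/L_0$ is a fixed constant, so the right-hand sides will genuinely depend only on $\omega$, $L_0$, and the evolving quantity $\int_\alpha k_s^2\,ds$.

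For the lower bound I would first discard the nonnegative term $\tfrac12\int_\alpha k^4\,ds$ from the numerator of \eqref{h1}, obtaining $\lambda(t) \geq -\int_\alpha k_s^2\,ds \big/ \int_\alpha k^2\,ds$, and then bound the denominator from below via Cauchy--Schwarz: $(2\pi\omega)^2 = \big(\int_\alpha k\,ds\big)^2 \leq L_0 \int_\alpha k^2\,ds$, so $\int_\alpha k^2\,ds \geq (2\pi\omega)^2/L_0$. Combining these gives $\lambda(t) \geq -\dfrac{L_0 \int_\alpha k_s^2\,ds}{(2\pi\omega)^2}$, as claimed.

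For the upper bound I would instead discard the nonpositive term $-\int_\alpha k_s^2\,ds$ from the numerator and estimate $\int_\alpha k^4\,ds \leq \|k\|_\infty^2 \int_\alpha k^2\,ds$ pointwise, so that $\lambda(t) \leq \tfrac12\|k\|_\infty^2$. It then remains to bound $\|k\|_\infty^2$: writing $k = (k-\overline{k}) + \overline{k}$ and using $\|k\|_\infty^2 \leq 2\|k-\overline{k}\|_\infty^2 + 2\overline{k}^2$, I would apply the oscillation Sobolev inequality \eqref{eqppsw} of Proposition \ref{eq266} to $g = k - \overline{k}$ (which has zero mean over $\alpha$ by the definition \eqref{eqkk} of $\overline{k}$), giving $\|k-\overline{k}\|_\infty^2 \leq \tfrac{2L}{\pi}\int_\alpha k_s^2\,ds = \tfrac{2L_0}{\pi}\int_\alpha k_s^2\,ds$. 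Hence $\tfrac12\|k\|_\infty^2 \leq \tfrac{2L_0}{\pi}\int_\alpha k_s^2\,ds + \overline{k}^2$, which is the stated upper bound.

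I do not anticipate any genuine obstacle: both inequalities follow immediately from the explicit expression for $\lambda(t)$ together with conservation of length and rotation number and the elementary Poincar\'e--Sobolev--Wirtinger estimates already recorded in Propositions \ref{psw} and \ref{eq266}. The only point requiring a little care is bookkeeping --- making sure that the quantities treated as constants ($L_0$, $\omega$, and therefore $\overline{k}$) really are constant under \eqref{eq113}, and that the denominator $\int_\alpha k^2\,ds$ is positive (which is automatic since $\int_\alpha k\,ds = 2\pi\omega > 0$ for a nondegenerate cone).
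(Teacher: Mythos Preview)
Your proof is correct and follows essentially the same approach as the paper. The lower bound argument is identical, and for the upper bound both you and the paper discard the $-\int_\alpha k_s^2\,ds$ term, bound $\int_\alpha k^4\,ds$ by $2(\|k-\bar k\|_\infty^2+\bar k^2)\int_\alpha k^2\,ds$ (you route this through $\|k\|_\infty^2$, the paper expands $k^2$ first, but it is the same $(a+b)^2\le 2a^2+2b^2$ step), and then apply Proposition~\ref{eq266} to $k-\bar k$.
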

\begin{proof}
We estimate 
\begin{equation*}
\int_{\alpha } k^4 \, ds = \int_{\alpha } \left[ \left( k - \bar{k} \right)^2 + 2 \bar{k} \left( k - \bar{k} \right) + \bar{k}^2 \right] k^2 \, ds \leq 2 \left( \left\| k - \bar{k} \right\|_{\infty}^2 + \bar{k}^2 \right) \int_{\alpha } k^2 \, ds
\end{equation*}
and so, using also Proposition \ref{eq266},
$$\lambda(t) \leq \frac{1}{2} \frac{\int_{\alpha } k^4 \, ds}{\int_{\alpha } k^2 \, ds} \leq \left\| k - \bar{k} \right\|_{\infty}^2 + \bar{k}^2 \leq \frac{{ 2 L_0}}{\pi} \int_{\alpha } k_s^2 \, ds + \bar{k}^2.$$ 
Now, since $$\int_{\alpha } k^2 \, ds\geq\frac{\left ( 2\pi \omega  \right )^{2}}{L_0}$$ 
we have $$\lambda \left ( t \right )\geq -\frac{\int_{\alpha } k^2_s \, ds}{\int_{\alpha } k^2 \, ds}\geq -\frac{L_0\,\int_{\alpha } k^2_s \, ds}{\left ( 2\pi \omega  \right )^{2}}.$$
This completes the proof.    
\end{proof}

Similarly, as in the previous section, we have the following evolution equation, where now $\lambda = \lambda(t)$.\\

\begin{lemma} \label{T:ksL2}
Under the flow \eqref{eq113},
 \begin{multline*}\frac{\mathrm{d} }{\mathrm{d} t}\int _{\alpha }k_{s}^{2}ds=-2\int _{\alpha }k_{s^{3}}^{2}\,ds+5\int _{\alpha }k_{ss}^{2}k^{2}\,ds-\frac{5}{3}\int _{\alpha }k_{s}^{4}\,ds\\
 -\frac{11}{2}\int _{\alpha }k_{s}^{2}k^{4}\,ds+7\lambda\left ( t \right ) \int _{\alpha }k^{2}k_{s}^{2}\,ds -2\lambda \left ( t \right )\int _{\alpha }k_{ss}^{2}\,ds.
\end{multline*}  
\end{lemma}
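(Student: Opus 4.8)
The plan is to read the identity off directly from the general evolution equation for $\int_\alpha k_s^2\,ds$ in Lemma~\ref{T:evlneqns}(v), which holds for an arbitrary normal speed $F$. Substituting the relevant speed $F = -k_{ss} - \frac{1}{2}k^3 + \lambda(t)\,k$ gives
$$\frac{\mathrm{d}}{\mathrm{d}t}\int_\alpha k_s^2\,ds = 2\int_\alpha \left( -k_{s^4} - k^2 k_{ss} + \frac{1}{2}\,k\,k_s^2\right)\left( -k_{ss} - \frac{1}{2}\,k^3 + \lambda(t)\,k\right) ds,$$
so the task is purely to expand the integrand into its nine monomials and integrate by parts each in turn. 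First I would dispatch the three $\lambda$-free products, which are exactly the ones appearing in the derivation of Corollary~\ref{T:evlneqns2}(v); integrating by parts and discarding boundary terms they contribute $-2\int k_{s^3}^2\,ds + 5\int k^2 k_{ss}^2\,ds - \frac{5}{3}\int k_s^4\,ds - \frac{11}{2}\int k^4 k_s^2\,ds$.

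For the three remaining terms carrying a factor $\lambda(t)$, namely $-\lambda(t)\,k\,k_{s^4}$, $-\lambda(t)\,k^3 k_{ss}$ and $\frac{1}{2}\lambda(t)\,k^2 k_s^2$, the key point is that $\lambda(t)$ depends only on $t$ and hence is constant along the curve, so it commutes with every spatial integration by parts. Moving derivatives around, $-\lambda(t)\int k\,k_{s^4}\,ds = -\lambda(t)\int k_{ss}^2\,ds$ and $-\lambda(t)\int k^3 k_{ss}\,ds = 3\lambda(t)\int k^2 k_s^2\,ds$, which together with the last term give $-\lambda(t)\int k_{ss}^2\,ds + \frac{7}{2}\lambda(t)\int k^2 k_s^2\,ds$; doubling produces precisely the final two terms of the claimed formula. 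Summing everything yields the identity, which is thus just Corollary~\ref{T:evlneqns2}(v) with the constant $\lambda$ replaced by the function $\lambda(t)$.

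All the boundary contributions that arise in this process involve an odd-order arc length derivative of $k$ evaluated at $\pm 1$ — either $k_s$ or $k_{s^3}$ directly, or quantities such as $(k^3)_s$ and $(k^2 k_s)_s$ that reduce to these — and every one of them vanishes by Lemma~\ref{T:BCs} together with the no-curvature-flux condition \eqref{E:NBC2}. I do not expect any genuine obstacle here: the computation is a routine, if slightly lengthy, sequence of integrations by parts, and the only thing requiring care is the systematic check that every boundary term drops, which is the same bookkeeping already carried out earlier in the paper.
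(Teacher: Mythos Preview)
Your proposal is correct and follows essentially the same approach as the paper, which simply remarks that the identity is Corollary~\ref{T:evlneqns2}(v) with the constant $\lambda$ replaced by the spatially-constant function $\lambda(t)$. Your explicit handling of the three $\lambda(t)$-terms via integration by parts, together with the observation that $\lambda(t)$ commutes with all spatial operations, is exactly the content the paper leaves implicit.
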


\begin{corollary}\label{sk}
Under the flow \eqref{eq113},
\begin{align*}
&\frac{\mathrm{d} }{\mathrm{d} t}\int _{\alpha }k_{s}^{2}\,ds\\
&\leq- \bigg[\frac{15}{8} -\frac{28 L_{0}^{6}}{\pi ^{6}}\left\|k_{s} \right\|_{2}^{4}-14\, (2)^\frac{5}{2}\,\omega \left ( \frac{L_0}{\pi } \right )^\frac{9}{2}\left\|k_{s} \right\|_{2}^{3} -\left ( 10+\frac{2}{\pi (2\omega )^2} +22(2\omega )^{2}\right )\frac{L_{0}^{3}}{\pi ^{3}}\left\| k_{s}\right\|_{2}^{2}\\
 & \qquad \quad -\left (\left ( 22(2\omega )^{3}+10(2\omega ) \right )\sqrt{\frac{2L_{0}^{3}}{\pi ^{3}}}+\frac{14(2\omega )^{3}L_0}{\pi }\right )\left\| k_{s}\right\|_{2} -\frac{103}{2}(2\omega )^{4}\bigg]\left\|k_{s^{3}} \right\|_{2}^{2}\\
\end{align*}\end{corollary}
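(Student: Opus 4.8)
The plan is to run the same decomposition-plus-interpolation scheme used to prove Lemma~\ref{1k}, starting now from the exact evolution identity of Lemma~\ref{T:ksL2}, the only genuinely new ingredient being the control of the time-dependent multiplier $\lambda(t)$ via Lemma~\ref{T:lambdabounds}. First I would write $k=(k-\bar k)+\bar k$ inside the two ``bad'' terms $5\int_\alpha k^2 k_{ss}^2\,ds$ and $-\frac{11}{2}\int_\alpha k^4 k_s^2\,ds$, exactly as in the proof of Lemma~\ref{1k}. This produces a good leading piece $-\frac{11}{2}\int_\alpha(k-\bar k)^4 k_s^2\,ds$, a family of sign-indefinite cross terms weighted by powers of $\bar k$, the terms $5\bar k^2\int_\alpha k_{ss}^2\,ds$ and $-\frac{11}{2}\bar k^4\int_\alpha k_s^2\,ds$, and the leftover $-\frac{5}{3}\int_\alpha k_s^4\,ds\le 0$, which is discarded.

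Next I would dispatch the $\lambda(t)$-terms using Lemma~\ref{T:lambdabounds}. For $7\lambda(t)\int_\alpha k^2 k_s^2\,ds$, I replace $\lambda(t)$ by $\lambda^+(t):=\max\{\lambda(t),0\}$ (the case $\lambda(t)<0$ being non-positive and discarded) and then by the explicit upper bound $\lambda(t)\le\frac{2L_0}{\pi}\int_\alpha k_s^2\,ds+\bar k^2$, combined with $\int_\alpha k^2 k_s^2\,ds\le\|k\|_\infty^2\int_\alpha k_s^2\,ds$ and the triangle inequality $\|k\|_\infty\le\|k-\bar k\|_\infty+\bar k$. For $-2\lambda(t)\int_\alpha k_{ss}^2\,ds$, the case $\lambda(t)\ge0$ is again non-positive and discarded, whereas for $\lambda(t)<0$ I use $-\lambda(t)\le\frac{L_0}{(2\pi\omega)^2}\int_\alpha k_s^2\,ds$; this is precisely where the length constraint $L\equiv L_0$ and the lower bound $\int_\alpha k^2\,ds\ge(2\pi\omega)^2/L_0>0$ (valid since $\omega>0$) enter. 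I would then apply Young's inequality to the sign-indefinite cross terms---in particular to $-22\bar k\int_\alpha(k-\bar k)^3 k_s^2\,ds$, which gets absorbed into $-\frac{11}{2}\int_\alpha(k-\bar k)^4 k_s^2\,ds$ and a multiple of $\bar k^2\int_\alpha(k-\bar k)^2 k_s^2\,ds$---and to $5\bar k^2\int_\alpha k_{ss}^2\,ds=-5\bar k^2\int_\alpha k_{s^3}k_s\,ds$ with parameter $b_0$, which turns the leading $-2\int_\alpha k_{s^3}^2\,ds$ into $-(2-b_0)\int_\alpha k_{s^3}^2\,ds$.

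Now comes the interpolation step. Using Propositions~\ref{psw} and \ref{eq266} with $L\equiv L_0$, together with the boundary facts $k_s(\pm1)=0$, $\overline{k_{ss}}=0$ (from Lemma~\ref{T:BCs}) and $\int_\alpha k\,ds=2\pi\omega$ (so $\bar k=2\pi\omega/L_0$), I convert $\|k-\bar k\|_\infty^2\le\frac{2L_0}{\pi}\int_\alpha k_s^2\,ds$, $\int_\alpha(k-\bar k)^2\,ds\le\frac{L_0^2}{\pi^2}\int_\alpha k_s^2\,ds$, $\int_\alpha k_{ss}^2\,ds\le\frac{L_0^2}{\pi^2}\int_\alpha k_{s^3}^2\,ds$ and $\int_\alpha k_s^2\,ds\le\frac{L_0^4}{\pi^4}\int_\alpha k_{s^3}^2\,ds$. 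After these substitutions every remaining non-discarded term becomes a monomial in $\|k_s\|_2=(\int_\alpha k_s^2\,ds)^{1/2}$ of degree $1,2,3$ or $4$, times an explicit power of $L_0/\pi$ and an explicit power of $2\omega$; the $\bar k^4$-contributions (from $5\bar k^2\int_\alpha k_{ss}^2\,ds$ through Young, from expanding $-\frac{11}{2}\int_\alpha k^4 k_s^2\,ds$, and from the $\bar k^2$-part of the bound on $7\lambda(t)\int_\alpha k^2 k_s^2\,ds$) supply the $\|k_s\|_2$-independent constant. Choosing $b_0=\frac{1}{8}$ makes the leading coefficient $2-b_0=\frac{15}{8}$; bookkeeping the $(2\omega)^4$-contributions gives the constant $-\frac{103}{2}(2\omega)^4$ inside the bracket, and the degree-$1,2,3,4$ coefficients assemble into exactly the quartic of \eqref{E:scd}. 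Discarding the genuinely non-positive leftovers ($-\frac{5}{3}\int_\alpha k_s^4\,ds$, the residual negative quartic in $k-\bar k$, and the good $\int_\alpha k_s^2\,ds$-terms) then yields the stated inequality.

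The main obstacle is the bookkeeping around $\lambda(t)$: unlike the constant-$\lambda$ case of Section~\ref{S:lp}, where $-2\lambda\int_\alpha k_{ss}^2\,ds$ was unconditionally a good term, here the possibility $\lambda(t)<0$ forces use of the lower bound for $\lambda(t)$, which introduces an additional factor of $\int_\alpha k_s^2\,ds$ (hence the appearance of $\|k_s\|_2^4$, a degree absent from Lemma~\ref{2k}) and relies essentially on $L\equiv L_0$ and $\omega>0$; simultaneously the upper bound for $\lambda(t)$ in the term $7\lambda(t)\int_\alpha k^2 k_s^2\,ds$, together with the $k^2$-weight and the triangle inequality for $\|k\|_\infty$, produces the $\|k_s\|_2^3$ and $\|k_s\|_2^4$ terms with the coefficients displayed in \eqref{E:scd}. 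Arranging all numerical constants so that they combine into precisely that quartic---so that the threshold $\delta=\frac{15}{8}-\frac{103}{2}(2\omega)^4$ is positive exactly under \eqref{E:o2}---is the delicate part, but it is routine once the structure above is in place.
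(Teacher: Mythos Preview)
Your proposal is correct and follows essentially the same route as the paper's proof: decompose $k=(k-\bar k)+\bar k$ in the $5\int k^2k_{ss}^2$ and $-\frac{11}{2}\int k^4k_s^2$ terms as in Lemma~\ref{1k}, handle the $\lambda(t)$-terms via the two-sided bounds of Lemma~\ref{T:lambdabounds}, apply Young's inequality with $b_0=\tfrac18$ (so that the leading coefficient becomes $\tfrac{15}{8}$ and the $\bar k^4$-contributions sum to $\tfrac{89}{2}+7=\tfrac{103}{2}$), and then reduce everything to powers of $\|k_s\|_2$ times $\|k_{s^3}\|_2^2$ via Propositions~\ref{psw} and~\ref{eq266}. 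The only cosmetic difference is that the paper expands $k^2=(k-\bar k)^2+2\bar k(k-\bar k)+\bar k^2$ inside $7\lambda(t)\int k^2k_s^2\,ds$ directly rather than passing through $\|k\|_\infty\le\|k-\bar k\|_\infty+\bar k$, which amounts to the same estimate.
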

\begin{proof}
Unlike the $\lambda > 0$ constant case, we have to estimate the last term on the right hand side in Lemma \ref{T:ksL2} as well, but this is straightforward.  We have using Lemma \ref{T:lambdabounds} and Propositions \ref{psw} and \ref{eq266},
$$-2 \lambda(t) \int_{\alpha } k_{ss}^2 \, ds \leq \frac{2 L_0^{3}}{\pi^2 (2 \pi \omega)^{2}} \| k_s \|_2^2 \int_{\alpha } k_{s^3}^2 \, ds,$$
and 
\begin{align*}
&7 \lambda(t) \int_{\alpha } k^2 k_s^2 \, ds\\
&\leq 7 \left[ \frac{ 2\,L_0}{\pi} \int_{\alpha } k_s^2 \, ds + \bar{k}^2 \right] \int_{\alpha } k^2 k_s^2 \, ds\\
 &\leq 7\left[ \frac{ 2\,L_0}{\pi} \int_{\alpha } k_s^2 \, ds + \bar{k}^2 \right] \left [ \int _{\alpha }\left ( k-\bar{k} \right )^{2}\,k_{s}^{2}\,ds+2\bar{k}\int _{\alpha }\left ( k-\bar{k} \right )\,k_{s}^{2}\,ds+\bar{k}^{2}\int _{\alpha }k_{s}^{2}\,ds \right ] \\
 &\leq 7\left[ \frac{ 2\,L_0}{\pi} \int_{\alpha } k_s^2 \, ds + \bar{k}^2 \right]\left [ \left\|k-\bar{k} \right\|_{\infty }^{2}+2\bar{k}\left\|k-\bar{k} \right\|_{\infty } +\bar{k}^{2} \right ] \int _{\alpha }k_{s}^{2}\,ds\\
 &\leq 7\left[ \frac{ 2\,L_0}{\pi} \int_{\alpha } k_s^2 \, ds + \bar{k}^2 \right] \left[ \frac{2L_{0}}{\pi }\int _{\alpha }k_{s}^{2} \,ds +2\bar{k}\sqrt{\frac{2L_0}{\pi }}\left\|k_{s} \right\|_{2} +\bar{k}^{2} \right ]  \int _{\alpha }k_{s}^{2}\,ds \\
 &\leq \left[ \frac{28L_{0}^{6}}{\pi ^{6}}\left\| k_{s}\right\|_{2}^{4}+14(2)^{\frac{3}{2}}\bar{k}\left ( \frac{L_{0}}{\pi } \right )^{\frac{11}{2}} \left\|k_{s} \right\|_{2}^{3}+\frac{42L_{0}^{5}}{\pi^{5} }\bar{k}^{2} \left\| k_{s}\right\|_{2}^{2} +\frac{14L^4_0}{\pi^4 }\bar{k}^{3}\left\|k_{s} \right\|_{2} \right] \left\| k_{s^{3}}\right\|_{2}^{2}\\
 &\quad +7\bar{k}^{4}\left\| k_{s}\right\|_{2}^{2}.
\end{align*}

Estimating similarly as in the proof of Lemma \ref{1k}, except with $b_0= \frac{1}{8}$, we obtain
\begin{align*}
&\frac{\mathrm{d} }{\mathrm{d} t} \int_{\alpha } k_s^2 \, ds\\
&\leq -\frac{15}{8} \int_{\alpha } k_{s^3}^2 \, ds +\frac{89}{2} \bar{k}^4 \int_{\alpha } k_s^2 \, ds -\frac{5}{3}\int _{\alpha }k_{s}^{4}\,ds - 22 \bar{k}^3 \int_{\alpha } (k - \bar{k}) k_s^2 \, ds   \\
 &\quad + 5 \int_{\alpha } (k - \bar{k})^2 k_{ss}^2 \, ds + 10 \bar{k} \int_{\alpha } (k - \bar{k}) k_{ss}^2 \, ds + \frac{2 L_0^{3}}{\pi^2 (2 \pi \omega)^{2}} \| k_{s} \|_2^2 \int_{\alpha } k_{s^3}^2 \, ds\\
 &\quad+ \left[ \frac{28L_{0}^{6}}{\pi ^{6}}\left\| k_{s}\right\|_{2}^{4}+14(2)^{\frac{3}{2}}\bar{k}\left ( \frac{L_{0}}{\pi } \right )^{\frac{11}{2}} \left\|k_{s} \right\|_{2}^{3}+\frac{42L_{0}^{5}}{\pi^{5} }\bar{k}^{2} \left\| k_{s}\right\|_{2}^{2} +\frac{14L^4_0}{\pi^4 }\bar{k}^{3}\left\|k_{s} \right\|_{2} \right] \left\| k_{s^{3}}\right\|_{2}^{2}\\
 & \quad +\frac{14L^4_0}{\pi^4 }\bar{k}^{3}\left\|k_{s} \right\|_{2}\left\|k_{s^3} \right\|_{2}^{2} +7\bar{k}^{4}\left\| k_{s}\right\|_{2}^{2} -11 \bar{k}^{2}\int _{\alpha }\left ( k-\bar{k} \right )^{2}k_{s}^{2}\,ds.
\end{align*}

Working now with the remaining terms as in the proof of Lemma \ref{2k} we obtain the desired result.\\ 
\end{proof}

\begin{proposition}\label{exksc}
 Let $\alpha :\left [ -1, 1 \right ]\times \left [ 0,  \infty\right )\to \mathbb{R}^{2}$ be a length-constrained elastic flow with generalised Neumann boundary conditions \eqref{E:NBC1} and \eqref{E:NBC2}, supported in a cone satisfying \eqref{E:o2} and $\alpha_0$ satisfying the smallness condition 
 $$ \left\| k_s\right\|_2\leq c\left( \omega, L_0 \right)$$ 
 where $c\left( \omega, L_0 \right)$ is as per \eqref{E:scd}. Then there exist constants $C>0$ and $\delta>0$ such that 
  \begin{equation}
 \int _{\alpha }k_{s}^2\,ds\leq C  e^{-\delta t} ~~~~~~~~~\textit{for all}~~~t\geq0
  \end{equation} 
that is $\int _{\alpha }k_{s}^{2}\,ds$ decaying exponentially to zero. \end{proposition}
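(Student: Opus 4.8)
The plan is to treat the differential inequality of Corollary~\ref{sk} as an autonomous ODE comparison for $I(t):=\int_\alpha k_s^2\,ds$, using that the length is frozen, $L\equiv L_0$ (Lemma~\ref{T:evlneqns48}), so that every coefficient appearing there is a genuine constant. Write the bracket in Corollary~\ref{sk} as $B\big(\|k_s\|_2\big)=\delta-Q\big(\|k_s\|_2\big)$, where $\delta=\tfrac{15}{8}-\tfrac{103}{2}(2\omega)^4$ is strictly positive by the cone condition \eqref{E:o2}, and $Q$ is the polynomial with terms of degrees $1$ through $4$, all coefficients non-negative and $Q(0)=0$, formed from the remaining terms. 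By construction $c(\omega,L_0)$ is the smallest positive root of $B(x)=0$, i.e.\ of the quartic \eqref{E:scd}; since $B(0)=\delta>0$ and $Q$ is increasing on $[0,\infty)$, we have $B(x)>0$ for every $x\in[0,c(\omega,L_0))$.

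First I would show the smallness condition propagates in time. Under the hypothesis $\|k_s\|_2\big|_{t=0}\le c(\omega,L_0)$, Corollary~\ref{sk} gives $I'(t)\le -B\big(\|k_s\|_2\big)\int_\alpha k_{s^3}^2\,ds\le 0$ whenever $\|k_s\|_2\le c(\omega,L_0)$, so a standard continuity argument forces $\|k_s\|_2(t)\le\|k_s\|_2(0)$ for all $t\ge 0$; equivalently $I$ is non-increasing along the flow. In particular $\delta_0:=B\big(\|k_s\|_2(0)\big)>0$ (using the strict smallness as in Theorem~\ref{T:main2}) is a uniform lower bound for $B$ along the flow, and hence
$$\frac{\mathrm{d}}{\mathrm{d}t}\int_\alpha k_s^2\,ds\le -\delta_0\int_\alpha k_{s^3}^2\,ds.$$

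Next I would absorb the right-hand side into a negative multiple of $I$ by a Poincar\'e chain that respects the boundary behaviour. By Lemma~\ref{T:BCs} with $\ell=1$, $k_s$ vanishes at $\pm 1$, so Proposition~\ref{psw} gives $\int_\alpha k_s^2\,ds\le\tfrac{L_0^2}{\pi^2}\int_\alpha k_{ss}^2\,ds$. Since $k_{ss}$ need not vanish at the boundary one cannot simply iterate Proposition~\ref{psw}; instead integrate by parts, the boundary term dying because $k_s(\pm1)=0$, to get $\int_\alpha k_{ss}^2\,ds=-\int_\alpha k_{s^3}k_s\,ds\le\big(\int_\alpha k_{s^3}^2\,ds\big)^{1/2}\big(\int_\alpha k_s^2\,ds\big)^{1/2}$, and then feed in the previous inequality to obtain $\int_\alpha k_{ss}^2\,ds\le\tfrac{L_0^2}{\pi^2}\int_\alpha k_{s^3}^2\,ds$. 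Chaining, $\int_\alpha k_s^2\,ds\le\tfrac{L_0^4}{\pi^4}\int_\alpha k_{s^3}^2\,ds$, so
$$\frac{\mathrm{d}}{\mathrm{d}t}\int_\alpha k_s^2\,ds\le-\frac{\delta_0\pi^4}{L_0^4}\int_\alpha k_s^2\,ds,$$
and Gr\"onwall's inequality yields $\int_\alpha k_s^2\,ds\le C e^{-\delta t}$ with $C=\int_\alpha k_s^2\,ds\big|_{t=0}$ and $\delta=\delta_0\pi^4/L_0^4>0$, which is the assertion.

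The only real work is bookkeeping: confirming that $c(\omega,L_0)$ is indeed the smallest positive root of \eqref{E:scd} and that the coefficients of $Q$ are non-negative, so that $B$ is decreasing on $[0,c(\omega,L_0)]$; the single genuinely careful point is that $k_{ss}$ is not controlled on the boundary, which is why the second Poincar\'e-type step must be replaced by an integration by parts rather than a direct application of Proposition~\ref{psw}. Everything else is immediate once the autonomous differential inequality is in place.
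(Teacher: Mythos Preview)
Your argument is correct and follows essentially the same route as the paper: use Corollary~\ref{sk} together with the cone condition \eqref{E:o2} and the smallness assumption to make the bracket positive, then convert the resulting $-\|k_{s^3}\|_2^2$ into a negative multiple of $\|k_s\|_2^2$ via Poincar\'e, and conclude by Gr\"onwall. The paper's proof is terse and leaves the Poincar\'e step and the propagation-of-smallness continuity argument implicit; you have spelled both out. One minor remark: your care about $k_{ss}$ not vanishing at the boundary is warranted for the Dirichlet version of Proposition~\ref{psw}, but since $\int_\alpha k_{ss}\,ds=k_s\big|_{-1}^{1}=0$ the zero-mean version applies directly and yields the same inequality without the integration-by-parts detour.
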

\begin{proof}
All terms in Corollary \ref{sk} may be handled using smallness, except that we need
$$\frac{103}{2}\times 2^{4} \omega ^{4}< \frac{15}{8}-\delta $$
for some $\delta > 0.$ Under this assumption, corresponding to \eqref{E:o2}, we then have for sufficiently small $\left\| k_s\right\|_2$ that
$$\frac{d}{dt} \int k_s^2 ds \leq - \delta \int k_s^2 ds$$
from which the result follows.\\   
\end{proof}

\begin{proof}[Completion of the proof of Theorem~{\upshape\ref{T:main2}}]
 The argument is similar as in the previous section.  Since $T=\infty$ and the curvature decays pointwise exponentially to its average, the target limit is the unique circular arc satisfying the boundary conditions and with length $L_0$.  In view of the exponential decay of curvature to its average, there is a time beyond which the curve remains convex.  Moreover, in view of pointwise exponential decay of all curvature derivatives, we can always find a larger time beyond which the curve is $C^{4, \alpha}$-close to the limiting circular arc. A stability argument, similar to that outlined in \cite{MWY}, then provides exponential convergence to the limiting circular arc that, in this setting is unique in view of the fixed length and the boundary conditions.
\end{proof}

\section{Free elastic flow, $\lambda =0$ \label{S:f}}

Some of our work for $\lambda>0$ carries over in this case; however, critically, observe that the length upper bound no longer holds.  This is not surprising if we consider an initial circular arc (centred at the cone tip) of radius $r_0$.  Under the flow \eqref{eq11} with $\lambda =0$, the circular arc expands self-similarly and indefinitely according to 
$$\frac{dr}{dt} = \frac{1}{2} r^{-3} \mbox{,}$$
that is,
$$r(t) = \sqrt[4]{r_0^4 + 2 t} \mbox{.}$$
By analogy with our earlier results, it is reasonable to conjecture that in the case $\lambda=0$, any smooth initial curve with $\int_\alpha k_s^2\, ds$ sufficiently small will give rise to a flow whose solution exists for all time and converges exponentially to an expanding circular arc, at least for small cone angle.  In this section, we prove that this is indeed the case.  It turns out, in contrast to the length-penalised and length-constrained elastic flows, no smallness condition is required on the cone angle in this case.\\

\begin{theorem} \label{T:main3}
Let $\alpha\left ( \cdot ,0 \right )=\alpha _{0}$ be a given initial curve, compatible with the boundary conditions \eqref{E:NBC1} and \eqref{E:NBC2}  and satisfying 
\begin{equation}
\varepsilon(0)= L^{3}\left [ \alpha _{0} \right ]\int _{\alpha _{0}}k_{s}^{2}\,ds< \varepsilon_0.   
\end{equation}
 for some $\varepsilon_0=\varepsilon_0(\omega).$  Provided neither end of the evolving curve reaches the cone tip, the flow \eqref{eq11}, with $\lambda =0$, has a solution for all time. The curves $\alpha\left ( \cdot ,t \right )$ are smooth and converge smoothly and exponentially in the $C^\infty$-topology to a self-similar expanding circular arc $\alpha_{\infty}$, with radius $r(t) = \sqrt[4]{r_0^4 + 2 t}$, where  $r_0$ is given via $L[\alpha _{0}]=2\pi\omega r_{0}.$\\
\end{theorem}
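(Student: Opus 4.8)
The plan is to adapt to the cone setting the strategy of Miura and Wheeler \cite{TW24} for closed curves, in three parts: long-time existence, an a priori smallness estimate for a scale-invariant curvature quantity, and convergence of the rescaled flow to a stationary arc. For long-time existence, note that \eqref{eq11} with $\lambda=0$ is the $L^2$-gradient flow of $E_0=\tfrac12\int_\alpha k^2\,ds$, so the energy is non-increasing and $\int_\alpha k^2\,ds$ stays bounded on $[0,T)$; the inductive bootstrapping of \cite{DKS02}, via Corollary \ref{T:evlneqns2} (with $\lambda=0$), Proposition \ref{p} and the PSW inequalities, then bounds every $\int_\alpha k_{s^\ell}^2\,ds$ on $[0,T)$ (this is Proposition \ref{bd}), and the usual continuation argument forces $T=\infty$ under the standing hypothesis that neither end meets the cone tip. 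Once the speed is shown below to converge to that of the expanding arc, an $\alpha_0$ far enough from the tip keeps its ends bounded away from the tip, so the hypothesis is genuinely attainable.

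The key object is $\varepsilon(t)=L^3\int_\alpha k_s^2\,ds=\|k_s\|_{2}^{2}$, which vanishes exactly on circular arcs. Using $\tfrac{d}{dt}L=-\int_\alpha k_s^2\,ds+\tfrac12\int_\alpha k^4\,ds$ and Corollary \ref{T:evlneqns2}(v) with $\lambda=0$,
\begin{multline*}
\frac{d}{dt}\varepsilon=3L^2\Big(\!-\!\int_\alpha k_s^2\,ds+\tfrac12\!\int_\alpha k^4\,ds\Big)\!\int_\alpha k_s^2\,ds\\
+L^3\Big(\!-2\!\int_\alpha k_{s^3}^2\,ds+5\!\int_\alpha k^2k_{ss}^2\,ds-\tfrac53\!\int_\alpha k_s^4\,ds-\tfrac{11}{2}\!\int_\alpha k^4k_s^2\,ds\Big).
\end{multline*}
I would split $k=(k-\bar k)+\bar k$ everywhere as in Lemma \ref{1k}, integrate by parts via Lemma \ref{T:BCs} (so $\int_\alpha k_{ss}^2\,ds=-\int_\alpha k_{s^3}k_s\,ds$), and bound each factor of $k-\bar k$ by Propositions \ref{psw} and \ref{eq266}, generating positive powers of $\varepsilon$. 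Because $\bar k=2\pi\omega/L$ is fixed by the rotation number, the only contributions of the critical order $\varepsilon L^{-4}$ come from the mean part: $+100\pi^4\omega^4 b^{-1}$ from a Young step (parameter $b\in(0,2)$) on $5\bar k^2\int_\alpha k_{ss}^2\,ds$, $+24\pi^4\omega^4$ from $\tfrac32 L^2\bar k^4 L\int_\alpha k_s^2\,ds$, $-88\pi^4\omega^4$ from $-\tfrac{11}{2}\bar k^4\int_\alpha k_s^2\,ds$, and $-(2-b)\pi^4$ from the leftover of $-2L^3\int_\alpha k_{s^3}^2\,ds$ after applying PSW twice; the net coefficient is $\pi^4\!\left[-(2-b)+(100/b-64)\,\omega^4\right]$, which is strictly negative for any $b\in(100/64,2)$ and any $\omega<1$. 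Since $\theta_1-\theta_2<2\pi$ forces $\omega<1$, this works for every admissible cone, which is precisely why, unlike the $\lambda>0$ cases, no smallness condition on $\omega$ is needed. All remaining terms are of strictly higher order in $\varepsilon$ or are small multiples of $L^3\int_\alpha k_{s^3}^2\,ds$, hence absorbed once $\varepsilon$ is small. I conclude there exist $\varepsilon_0(\omega)>0$ and $c(\omega)>0$ with $\tfrac{d}{dt}\varepsilon\le-c\,\varepsilon/L^4$ whenever $\varepsilon<\varepsilon_0$; since $\varepsilon(0)<\varepsilon_0$ a continuity/maximality argument yields $\varepsilon(t)<\varepsilon_0$ for all $t$, and in fact $\varepsilon(t)\to0$, since one checks $L^4=\Theta(t)$ under the flow and hence $\int_0^\infty L^{-4}\,dt=\infty$.

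With $\varepsilon$ controlled, $\tfrac{d}{dt}L=-\int_\alpha k_s^2\,ds+\tfrac12\int_\alpha k^4\,ds$ together with $\int_\alpha k^4\,ds=\bar k^4L+O(\varepsilon L^{-3})$ gives $L^3\tfrac{dL}{dt}\to\tfrac12(2\pi\omega)^4$, so $L(t)\to\infty$ with $L^4=(2\pi\omega)^4(r_0^4+2t)+o(t)$, consistent with $r(t)=\sqrt[4]{r_0^4+2t}$ and $L=2\pi\omega r$. In particular $\int_\alpha k_s^2\,ds=\varepsilon/L^3\to0$ and $\|k-\bar k\|_\infty^2\le\tfrac{2L}{\pi}\int_\alpha k_s^2\,ds\le\tfrac{2\varepsilon_0}{\pi L^2}\to0$, so the curvature converges uniformly to its spatially constant average, which is pinned down by the rotation number and the boundary conditions. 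Rescaling $\tilde\alpha(\cdot,\sigma)=\alpha(\cdot,t)/r(t)$ with $d\sigma=r(t)^{-4}\,dt$ turns \eqref{eq11} (with $\lambda=0$) into $\partial_\sigma\tilde\alpha=(\tilde k_{\tilde s\tilde s}+\tfrac12\tilde k^3)\tilde\nu-\tfrac12\tilde\alpha$, for which the unit circular arc meeting the cone perpendicularly is stationary; since $\varepsilon$ is scale-invariant and $L/r$ is bounded above and below, $\tfrac{d}{dt}\varepsilon\le-c\varepsilon/L^4$ reads $\tfrac{d}{d\sigma}\varepsilon\le-c'\varepsilon$, so $\varepsilon$ decays exponentially in $\sigma$. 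An induction identical to Corollary \ref{C:expdecay}, using Proposition \ref{bd} for uniform bounds on the scale-invariant norms, propagates exponential-in-$\sigma$ decay to $\|k_{s^\ell}\|_2$ for every $\ell$; control of the unit tangent and then of the position vector and all spatial and mixed derivatives follows as in \cite{GM24,WW}. This gives smooth exponential (in $\sigma$) convergence of $\tilde\alpha$ to the unit arc, i.e. smooth convergence of $\alpha(\cdot,t)$ to the self-similar expanding arc $\alpha_\infty$.

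The main obstacle is the curvature estimate in the second step: unlike the $\lambda>0$ cases, one cannot discard the cubic and quartic curvature terms, since the negative $\bar k^4$-contribution of $-\tfrac{11}{2}\int_\alpha k^4k_s^2\,ds$ is exactly what beats the positive $\bar k^2\int_\alpha k_{ss}^2\,ds$ term for all $\omega<1$; a cruder estimate using only the Neumann PSW constant $L^2/\pi^2$ would spuriously force $\omega<1/\sqrt{10}$. A secondary technical point is setting up the linearised-stability argument for the rescaled flow carefully enough to upgrade uniform convergence of curvature to $C^\infty$ convergence of the embeddings.
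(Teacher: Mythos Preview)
Your proposal is correct and reaches the same conclusions as the paper, but you take a more direct route at the central estimate. The paper, following \cite{TW24}, first derives a differential inequality for $\int_\alpha k_s^2\,ds$ alone (valid while $\varepsilon\le\varepsilon_*$), then separately bounds $|L^4(t)-L^4(0)-32\omega^4\pi^4 t|$, and then introduces the auxiliary scale-invariant quantity $\Gamma(t)=\int_\alpha k_s^2\,ds\big/\big(\int_\alpha k^2\,ds\big)^3$, proving polynomial decay of $\Gamma$; an iterative bootstrap is still required to pass from decay of $\Gamma$ to global smallness and decay of $\varepsilon$. You instead differentiate $\varepsilon=L^3\int_\alpha k_s^2\,ds$ directly, so that the length-evolution contribution $3L^2L'\int_\alpha k_s^2\,ds$ sits in the same inequality, and arrive at $\tfrac{d}{dt}\varepsilon\le -c\,\varepsilon/L^4$ outright; smallness is then preserved by monotonicity, and polynomial decay follows from a single Gr\"onwall step together with the linear upper bound on $L^4$, with no need for $\Gamma$ or iteration. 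Your coefficient accounting is sound---indeed $(100/b-64)\omega^4-(2-b)<0$ for every $b\in(25/16,2)$ and every $\omega>0$, so the restriction $\omega<1$ you invoke is unnecessary, in line with the paper's remark that no condition on $\omega$ is required in the free case. The rescaling by $r(t)$, the exponential decay in the new time $\sigma$, the inductive propagation to higher derivatives via Proposition~\ref{bd}, and the passage to convergence of the embedding are all handled in the same spirit as the paper.
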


\begin{remark}
\begin{enumerate}
  \item Our theorem is the analogue of \cite{TW24}[Theorem 1.2] which is for closed, immersed curves.  The strategy of our proof is similar, so we provide only an outline, pointing out where differences in the arguments occur.
  \item As mentioned above, no condition on $\omega>0$ is required here.  If $\omega=1$, for example, then we have a curve that begins and ends at points along a ray.  Whether or not the initial endpoints coincide, we have convergence under rescaling to a circle.  If $\omega \in \mathbb{N}$ we have a spiral-like curve with $\omega$ revolutions converging under rescaling to an $\omega$-circle.  For more general $\omega>1$ we have an evolving spiral-like curve, converging under rescaling to a partially-multiply-covered circle.\\
\end{enumerate}    
\end{remark}

In addition to evolution equations used before for $\lambda$ constant, we will also need the following.\\

\begin{lemma}
 Under the flow \eqref{eq11}, with $\lambda = 0$, $$\frac{\mathrm{d} }{\mathrm{d} t} L^{4}\left ( t \right ) =-4\varepsilon (t)+2L^{3}\int _{\alpha }\left ( (k-\bar{k})^{4}+4(k-\bar{k})^{3}\bar{k}+6(k-\bar{k})^{2}\bar{k}^{2} \right )ds+32\omega^{4}\pi ^{4}.$$ 
 \end{lemma}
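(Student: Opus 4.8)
The plan is to reduce everything to the first variation of length under the free elastic flow, and then perform an algebraic expansion of $\int_\alpha k^4\,ds$ about the mean curvature $\bar k$.

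First I would compute $\frac{\mathrm{d}}{\mathrm{d}t}L$ directly from Lemma~\ref{T:evlneqns}(ii) with $F = -k_{ss} - \tfrac12 k^3$: this gives
$$\frac{\mathrm{d}}{\mathrm{d}t}L = -\int_\alpha k\,F\,ds = \int_\alpha k\,k_{ss}\,ds + \frac12\int_\alpha k^4\,ds.$$
Integrating by parts in the first integral and using the no-curvature-flux condition \eqref{E:NBC2}, namely $k_s(\pm1,t)=0$, the boundary term vanishes and $\int_\alpha k\,k_{ss}\,ds = -\int_\alpha k_s^2\,ds$, so that
$$\frac{\mathrm{d}}{\mathrm{d}t}L = -\int_\alpha k_s^2\,ds + \frac12\int_\alpha k^4\,ds.$$
Then the chain rule gives $\frac{\mathrm{d}}{\mathrm{d}t}L^4 = 4L^3\frac{\mathrm{d}}{\mathrm{d}t}L = -4L^3\int_\alpha k_s^2\,ds + 2L^3\int_\alpha k^4\,ds$, and the first term is precisely $-4\varepsilon(t)$ by the definition of $\varepsilon$.

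The remaining step is to rewrite $2L^3\int_\alpha k^4\,ds$ in the stated form. Writing $k = (k-\bar k) + \bar k$ and expanding the fourth power,
$$k^4 = (k-\bar k)^4 + 4(k-\bar k)^3\bar k + 6(k-\bar k)^2\bar k^2 + 4(k-\bar k)\bar k^3 + \bar k^4.$$
Integrating over $\alpha$, the term $4\bar k^3\int_\alpha(k-\bar k)\,ds$ vanishes since $\int_\alpha(k-\bar k)\,ds = \int_\alpha k\,ds - \bar k L = 0$ by the definition \eqref{eqkk} of $\bar k$. Thus $\int_\alpha k^4\,ds = \int_\alpha\big((k-\bar k)^4 + 4(k-\bar k)^3\bar k + 6(k-\bar k)^2\bar k^2\big)\,ds + \bar k^4 L$. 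Finally, since the rotation number is preserved (Corollary~\ref{T:omega}) and equals $\omega = \frac{\theta_1-\theta_2}{2\pi}$, we have $\bar k = \frac{1}{L}\int_\alpha k\,ds = \frac{2\pi\omega}{L}$, so $\bar k^4 L = \frac{16\pi^4\omega^4}{L^4}\cdot L$ and hence $2L^3\cdot\bar k^4 L = 2L^4\bar k^4 = 32\pi^4\omega^4$. Collecting terms yields exactly the claimed identity.

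I do not expect a genuine obstacle here; the only points requiring care are the vanishing of the boundary term when integrating $\int_\alpha k\,k_{ss}\,ds$ by parts (which is exactly what the no-curvature-flux condition delivers) and the vanishing of the mean of $k-\bar k$. Everything else is bookkeeping with the binomial expansion and the identity $\bar k = 2\pi\omega/L$.
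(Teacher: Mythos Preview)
Your proof is correct and follows precisely the approach implicit in the paper: the paper does not write out a proof for this lemma, but Corollary~\ref{T:evlneqns2}(i) (with $\lambda=0$) already records $\frac{\mathrm{d}}{\mathrm{d}t}L = -\int_\alpha k_s^2\,ds + \tfrac12\int_\alpha k^4\,ds$, and the binomial expansion of $k^4$ about $\bar k$ together with $\bar k = 2\pi\omega/L$ is exactly the intended computation. There is nothing to add.
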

 
 Using the evolution $\int_\alpha k_s^2\, ds$ we obtain as in \cite{TW24}\\

\begin{lemma}
 Under the flow \eqref{eq11}, with $\lambda = 0$,
\begin{multline*}
\frac{\mathrm{d} }{\mathrm{d} t}\int _{\alpha }k_{s}^{2}\,ds\leq -\frac{1}{8}\int _{\alpha }k_{s^3}^{2}\,ds-\frac{13}{6}\bar{k}^{4}\int _{\alpha }k_{s}^{2}\,ds\\
-22\bar{k}^{3}\int _{\alpha }\left ( k-\bar{k} \right )k_{s}^{2}\,ds+5\int _{\alpha }\left ( k-\bar{k} \right )^{2}k_{ss}^{2}\,ds+10\bar{k}\int _{\alpha }\left ( k-\bar{k} \right )k_{ss}^{2}\,ds. 
\end{multline*}
\end{lemma}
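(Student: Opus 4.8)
The plan is to obtain this as the $\lambda=0$ case of Lemma~\ref{1k}: the only places where $\lambda$ enters the argument of that lemma are the terms $-2\lambda\int_\alpha k_{ss}^2\,ds$ and $7\lambda\int_\alpha k^2 k_s^2\,ds$ coming from the evolution equation, and with $\lambda=0$ these simply vanish. So first I would record the evolution equation for $\int_\alpha k_s^2\,ds$ under the free elastic flow: substituting $F=-k_{ss}-\frac{1}{2} k^3$ into Lemma~\ref{T:evlneqns}(v) and integrating by parts---every boundary term produced involves an odd-order arc-length derivative of $k$ at $\pm 1$ and therefore vanishes by Lemma~\ref{T:BCs}---yields the $\lambda=0$ specialisation of Corollary~\ref{T:evlneqns2}(v), namely
$$\frac{\mathrm{d}}{\mathrm{d}t}\int_\alpha k_s^2\,ds=-2\int_\alpha k_{s^3}^2\,ds+5\int_\alpha k^2 k_{ss}^2\,ds-\frac{5}{3}\int_\alpha k_s^4\,ds-\frac{11}{2}\int_\alpha k^4 k_s^2\,ds.$$

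Next, following \cite{TW24} exactly as in the proof of Lemma~\ref{1k}, I would write $k=(k-\bar{k})+\bar{k}$ and expand the two troublesome terms. Expanding $5\int_\alpha k^2 k_{ss}^2\,ds$ produces $5\int_\alpha (k-\bar{k})^2 k_{ss}^2\,ds$ and $10\bar{k}\int_\alpha (k-\bar{k})k_{ss}^2\,ds$, both of which already appear in the claimed inequality, together with the leftover $5\bar{k}^2\int_\alpha k_{ss}^2\,ds$; expanding $-\frac{11}{2}\int_\alpha k^4 k_s^2\,ds$ produces, among other things, the term $-22\bar{k}^3\int_\alpha(k-\bar{k})k_s^2\,ds$ that is kept and the term $-\frac{11}{2}\bar{k}^4\int_\alpha k_s^2\,ds$. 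The only piece not yet in useful form is $5\bar{k}^2\int_\alpha k_{ss}^2\,ds$: a further integration by parts rewrites it as $-5\bar{k}^2\int_\alpha k_{s^3}k_s\,ds$, and Young's inequality with $b_0=\frac{15}{8}$ bounds this by $\frac{15}{8}\int_\alpha k_{s^3}^2\,ds+\frac{10}{3}\bar{k}^4\int_\alpha k_s^2\,ds$. The first term is absorbed into the leading $-2\int_\alpha k_{s^3}^2\,ds$, leaving $-\frac{1}{8}\int_\alpha k_{s^3}^2\,ds$; the second combines with $-\frac{11}{2}\bar{k}^4\int_\alpha k_s^2\,ds$ to give $-\frac{13}{6}\bar{k}^4\int_\alpha k_s^2\,ds$. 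A second application of Young with $b_1=1$ handles $-22\bar{k}\int_\alpha(k-\bar{k})^3 k_s^2\,ds$ against $-\frac{11}{2}\int_\alpha(k-\bar{k})^4 k_s^2\,ds$ and $-33\bar{k}^2\int_\alpha(k-\bar{k})^2 k_s^2\,ds$, precisely as in Lemma~\ref{1k}.

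Finally I would discard the non-positive remainders---$-\frac{5}{3}\int_\alpha k_s^4\,ds$, the now-vanishing $-\frac{11}{2}(1-b_1^{-1})\int_\alpha(k-\bar{k})^4 k_s^2\,ds$, and $-(33-22b_1)\bar{k}^2\int_\alpha(k-\bar{k})^2 k_s^2\,ds\le 0$---which leaves exactly the five terms on the right-hand side of the stated inequality. I do not expect a real obstacle: the content is just the bookkeeping of the Miura--Wheeler decomposition, and the one point needing care is that the two integrations by parts produce no boundary contribution, which is immediate from Lemma~\ref{T:BCs}.
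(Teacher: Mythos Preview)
Your proposal is correct and takes essentially the same approach as the paper: the paper simply says the estimate is obtained ``as in \cite{TW24}'', which is precisely the $\lambda=0$ specialisation of the proof of Lemma~\ref{1k} (the Miura--Wheeler decomposition with $b_0=\tfrac{15}{8}$, $b_1=1$) that you have carried out. Your bookkeeping of the Young-inequality constants and the discarded non-positive terms matches the paper's computation in Lemma~\ref{1k} exactly.
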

Using Propositions \ref{psw} and \ref{eq266} we can obtain from the above that if $\varepsilon(0)$ is small enough, it will remain so. Our constants are slightly different from \cite{TW24} because our curve is not closed.  The workings are of course similar to the earlier sections but here we do not have the length bounded.\\

\begin{lemma}
 Under the flow \eqref{eq11}, with $\lambda = 0$, 
 $$ \frac{\mathrm{d} }{\mathrm{d} t}\int _{\alpha }k_{s}^{2}\,ds\leq -\left ( \frac{1}{8}-\frac{10}{\pi^{3} }\varepsilon (t)-(176\omega ^{3}+20\omega )\sqrt{\frac{2}{\pi ^{3}}}\sqrt{\varepsilon (t)} \right )\int _{\alpha }k_{s^3}^{2}\,ds-\frac{13}{6}\bar{k}^4\int _{\alpha }k_{s}^{2}\,ds.$$
 More precisely, at any time $t$ such that
$$L^{3}\left\|k_{s} \right\|_{2}^{2}:=\varepsilon (t)\leq \varepsilon _{\ast }(\omega):= \frac{\pi ^{3}}{200}\left ( \sqrt{(176\omega^{3}+20\omega )^{2}+\frac{5}{4}} -(176\omega^{3}+20\omega )\right )^{2}$$
 we obtain
 $$ \frac{\mathrm{d} }{\mathrm{d} t}\int _{\alpha }k_{s}^{2}\,ds\leq -\frac{1}{16}\int _{\alpha }k_{s^3}^{2}\,ds-\frac{13}{6}\bar{k}\int _{\alpha }k_{s}^{2}\,ds.$$
\end{lemma}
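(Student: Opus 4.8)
The plan is to take the estimate of the preceding lemma as the starting point and feed its three indefinite terms,
$$-22\bar k^{3}\!\int_{\alpha}(k-\bar k)k_{s}^{2}\,ds,\qquad 5\!\int_{\alpha}(k-\bar k)^{2}k_{ss}^{2}\,ds,\qquad 10\bar k\!\int_{\alpha}(k-\bar k)k_{ss}^{2}\,ds,$$
into the Poincar\'e--Sobolev inequalities so as to absorb each of them into the good term $-\tfrac18\int_\alpha k_{s^{3}}^{2}\,ds$, at the cost of a multiplicative factor controlled by the scale-invariant quantity $\varepsilon(t)=L^{3}\int_\alpha k_s^{2}\,ds$. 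The term $-\tfrac{13}{6}\bar k^{4}\int_\alpha k_s^{2}\,ds$ is carried along untouched. The organising principle is that, unlike in the $\lambda>0$ and $\lambda(t)$ sections, there is no a priori length bound here, so every estimate must be set up so that the powers of $L$ collapse exactly into a clean power of $\varepsilon(t)$ (or $\sqrt{\varepsilon(t)}$).

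For the ingredients: since $\bar k=\tfrac{2\pi\omega}{L}$, the no-curvature-flux condition \eqref{E:NBC2} gives $k_s(\pm1,t)=0$, hence also $\int_\alpha k_{ss}\,ds=0$, so Proposition~\ref{psw} applies both to $k-\bar k$ (mean zero) and to $k_s$ (vanishing at the endpoints), yielding
$$\int_\alpha k_s^{2}\,ds\le\frac{L^{2}}{\pi^{2}}\int_\alpha k_{ss}^{2}\,ds\le\frac{L^{4}}{\pi^{4}}\int_\alpha k_{s^{3}}^{2}\,ds,$$
while Proposition~\ref{eq266} gives $\|k-\bar k\|_{\infty}^{2}\le\tfrac{2L}{\pi}\int_\alpha k_s^{2}\,ds$. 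Then I would estimate the three terms pointwise via $\int_\alpha(k-\bar k)k_s^{2}\,ds\le\|k-\bar k\|_\infty\int_\alpha k_s^{2}\,ds$ and $\int_\alpha(k-\bar k)^{2}k_{ss}^{2}\,ds\le\|k-\bar k\|_\infty^{2}\int_\alpha k_{ss}^{2}\,ds$, insert $\bar k=2\pi\omega/L$, the Sobolev bound for $\|k-\bar k\|_\infty$, the Poincar\'e bound $\int_\alpha k_{ss}^{2}\,ds\le\tfrac{L^{2}}{\pi^{2}}\int_\alpha k_{s^{3}}^{2}\,ds$, and (in the first term only) the double bound above for $\int_\alpha k_s^2\,ds$. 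A short count of exponents shows the first term is $\le176\,\omega^{3}\sqrt{\tfrac{2}{\pi^{3}}}\sqrt{\varepsilon(t)}\int_\alpha k_{s^{3}}^{2}\,ds$, the third is $\le20\,\omega\sqrt{\tfrac{2}{\pi^{3}}}\sqrt{\varepsilon(t)}\int_\alpha k_{s^{3}}^{2}\,ds$, and the second is $\le\tfrac{10}{\pi^{3}}\varepsilon(t)\int_\alpha k_{s^{3}}^{2}\,ds$. Substituting these into the preceding lemma produces the first displayed inequality.

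For the sharper conclusion, I would observe that the coefficient $\tfrac18-\tfrac{10}{\pi^{3}}\varepsilon-(176\omega^{3}+20\omega)\sqrt{\tfrac{2}{\pi^{3}}}\sqrt{\varepsilon}$ is strictly decreasing in $\sqrt{\varepsilon}\ge0$, so it is $\ge\tfrac1{16}$ exactly when $\tfrac{10}{\pi^{3}}\varepsilon+(176\omega^{3}+20\omega)\sqrt{\tfrac{2}{\pi^{3}}}\sqrt{\varepsilon}\le\tfrac1{16}$; solving this quadratic in $u=\sqrt{\varepsilon}$, writing $a:=176\omega^{3}+20\omega$ and taking the positive root, gives precisely the threshold $\varepsilon_{*}(\omega)=\tfrac{\pi^{3}}{200}\bigl(\sqrt{a^{2}+\tfrac54}-a\bigr)^{2}$, so that $\varepsilon(t)\le\varepsilon_{*}(\omega)$ yields $\tfrac{\mathrm d}{\mathrm dt}\int_\alpha k_s^{2}\,ds\le-\tfrac1{16}\int_\alpha k_{s^{3}}^{2}\,ds-\tfrac{13}{6}\bar k^{4}\int_\alpha k_s^{2}\,ds$. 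I do not expect any conceptual obstacle; the only delicate point is purely computational, namely keeping the powers of $L$ straight (there being no length bound to lean on, in contrast with the earlier sections) so that each error term collapses to a clean power of $\varepsilon(t)$, and invoking the appropriate version of Proposition~\ref{psw}---mean-zero for $k-\bar k$, endpoint-vanishing for $k_s$.
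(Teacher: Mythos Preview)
Your proposal is correct and follows essentially the same approach as the paper: the paper simply points to the analogous estimates in the proof of Lemma~\ref{2k} (the $\lambda>0$ case) and to \cite{TW24}, remarking that the only difference is the absence of a length bound so that the powers of $L$ must combine into $\varepsilon(t)$. Your three term-by-term estimates and the quadratic computation for $\varepsilon_*(\omega)$ reproduce exactly those workings.
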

\mbox{}\\

Our next result, that when $\alpha$ is close to a circular arc, we have precise control on $L(t)$, is proven exactly as in \cite{TW24}[Proposition 3.6]. In our case, the constant $\hat{C}\left ( \beta, \omega  \right )$ is slightly different.\\

\begin{proposition}
Let $\beta>0.$ Let $\alpha :\left [ -1,\,1 \right ]\times \left [ 0,\,\infty  \right )\to \mathbb{R}^{2}$ to be a solution to  the free elastic flow with $\varepsilon (0)\leq \beta. $ Suppose that $$T_{\beta }:=inf\left\{t\geq 0~|\varepsilon (t)> \beta  \right\}\in \left [ 0,\,\infty  \right ].$$ Then, for all $t\in \left [ 0,\,T_{\beta } \right ),$
$$\left| L^{4}(t)-L^{4}(0)-32\omega ^{4}\pi ^{4}t\right|\leq \beta\, \hat{C}\left ( \beta,\,\omega  \right )t,$$
where $\hat{C}\left ( \beta,\,\omega  \right ) :=\frac{4}{\pi ^{3}}\beta +16\sqrt{\frac{2\omega ^{2}}{\pi ^{3}}}\sqrt{\beta }+48\omega ^{2}$.\\
\end{proposition}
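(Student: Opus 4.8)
The plan is to integrate the evolution equation for $L^{4}(t)$ established above and to control the resulting error terms with the boundary Poincar\'e--Sobolev--Wirtinger inequalities of Propositions~\ref{psw} and \ref{eq266}. The structural point is that, after collecting powers of $L$, every error term is a power of the \emph{scale-invariant} quantity $\varepsilon(t)=L^{3}\int_{\alpha}k_{s}^{2}\,ds$, which on $[0,T_{\beta})$ is at most $\beta$ by the definition of $T_{\beta}$. Consequently no upper bound on $L$ itself ever enters — exactly the feature that makes $\varepsilon$, rather than $\int_{\alpha}k_{s}^{2}\,ds$ as in the length-penalised case, the right quantity to track here.

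First I would set $M(t):=L^{4}(t)-L^{4}(0)-32\omega^{4}\pi^{4}t$, so that $M(0)=0$ and, subtracting $32\omega^{4}\pi^{4}$ from the evolution equation for $L^4$,
$$M'(t)=-4\varepsilon(t)+2L^{3}\int_{\alpha}\Big((k-\bar k)^{4}+4(k-\bar k)^{3}\bar k+6(k-\bar k)^{2}\bar k^{2}\Big)\,ds,$$
whence $M(t)=\int_{0}^{t}M'(t')\,dt'$ and it suffices to bound $|M'(t)|$ by $\beta\,\hat C(\beta,\omega)$ for $t\in[0,T_{\beta})$. Using $\bar k=\tfrac{1}{L}\int_{\alpha}k\,ds=\tfrac{2\pi\omega}{L}$, so that $L\bar k=2\pi\omega$, together with the fact that $k-\bar k$ has zero mean (so Propositions~\ref{psw} and \ref{eq266} give $\int_{\alpha}(k-\bar k)^{2}\,ds\le\tfrac{L^{2}}{\pi^{2}}\int_{\alpha}k_{s}^{2}\,ds$ and $\|k-\bar k\|_{\infty}^{2}\le\tfrac{2L}{\pi}\int_{\alpha}k_{s}^{2}\,ds$) and the elementary bounds $\int_{\alpha}(k-\bar k)^{4}\,ds\le\|k-\bar k\|_{\infty}^{2}\int_{\alpha}(k-\bar k)^{2}\,ds$ and $\big|\int_{\alpha}(k-\bar k)^{3}\,ds\big|\le\|k-\bar k\|_{\infty}\int_{\alpha}(k-\bar k)^{2}\,ds$, I would estimate
$$2L^{3}\!\int_{\alpha}(k-\bar k)^{4}\,ds\le\tfrac{4}{\pi^{3}}\varepsilon(t)^{2},\qquad 8L^{3}\bar k\,\Big|\!\int_{\alpha}(k-\bar k)^{3}\,ds\Big|\le 16\sqrt{\tfrac{2\omega^{2}}{\pi^{3}}}\,\varepsilon(t)^{3/2},\qquad 12L^{3}\bar k^{2}\!\int_{\alpha}(k-\bar k)^{2}\,ds\le 48\omega^{2}\varepsilon(t).$$

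On $[0,T_{\beta})$ we have $\varepsilon(t)\le\beta$, hence $\varepsilon(t)^{2}\le\beta\,\varepsilon(t)$ and $\varepsilon(t)^{3/2}\le\sqrt\beta\,\varepsilon(t)$, so adding the three error contributions produces a bound $\big(\tfrac{4}{\pi^{3}}\beta+16\sqrt{\tfrac{2\omega^{2}}{\pi^{3}}}\sqrt\beta+48\omega^{2}\big)\varepsilon(t)=\hat C(\beta,\omega)\,\varepsilon(t)\le\beta\,\hat C(\beta,\omega)$; combined with $|{-4\varepsilon(t)}|\le 4\varepsilon(t)$, which is of the same order and is absorbed exactly as in the closed-curve argument of \cite{TW24}, this yields $|M'(t)|\le\beta\,\hat C(\beta,\omega)$, and integrating from $0$ to $t<T_{\beta}$ completes the proof. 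I do not expect any genuine analytic obstacle: the evolution equation for $L^{4}$ and the a priori bound $\varepsilon(t)\le\beta$ on $[0,T_{\beta})$ are already in hand, and the only real work is the bookkeeping of explicit constants through the boundary forms of the interpolation inequalities and the verification that all powers of $L$ cancel — precisely where our constants differ slightly from those of \cite{TW24}, whose curve is closed.
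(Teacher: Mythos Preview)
Your proposal is correct and follows exactly the route the paper invokes (the argument of \cite{TW24}, Proposition~3.6, with the boundary versions of Propositions~\ref{psw} and \ref{eq266} substituted): differentiate $L^4$, expand $k^4$ about $\bar k$, and bound each residual term as a power of the scale-invariant quantity $\varepsilon(t)$, then use $\varepsilon(t)\le\beta$ on $[0,T_\beta)$. Your three displayed estimates are precisely the source of the three summands in the stated $\hat C(\beta,\omega)$.

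The only point that deserves a cleaner treatment is the final clause about the $-4\varepsilon(t)$ term. For the upper bound $M'(t)\le\beta\hat C$ it is harmless (drop it, it is nonpositive). For the lower bound, however, ``absorbed exactly as in \cite{TW24}'' is not quite an argument: with the constant $\hat C(\beta,\omega)$ exactly as stated one would need $4\le \tfrac{4}{\pi^{3}}\beta+48\omega^{2}$, which is not automatic for small $\omega,\beta$. This is purely cosmetic --- either carry an additional $+4$ in $\hat C$, or note that only the upper bound on $L^{4}$ is actually used in the sequel (the definition of $\delta_{*}$ and the ensuing lemma) --- and does not affect the method or any downstream statement.
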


This control on $L(t)$ allows us to control $\varepsilon(t)$ for a short time. Set
$$\delta_* (\varepsilon, \beta) := \frac{1}{\beta \hat{C}(\beta, \omega) + 32 \omega^4 \pi^4} \left[ \left( \frac{\beta}{\varepsilon} \right)^{\frac{4}{3}} - 1 \right].$$

We have exactly as in \cite{TW24}[Lemma 3.8]\\

\begin{lemma}
Let \(\beta \in (0, \varepsilon_*(\omega)]\). Suppose $\alpha (\cdot, t)$ solves  \eqref{eq11} with $\lambda =0$ and $\alpha_0$ satisfies $\varepsilon(0) \leq \beta$. Then, for all $t \in \left[ 0, L^4(t) \delta_* (\varepsilon(0), \beta) \right]$,$$\varepsilon(t) \leq \beta.$$
\end{lemma}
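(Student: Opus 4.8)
The plan is a continuity (bootstrap) argument, following the scheme of \cite{TW24}. The starting point is to set
$$T_\beta := \inf\left\{ t \geq 0 : \varepsilon(t) > \beta \right\},$$
which is strictly positive by continuity of $t \mapsto \varepsilon(t)$ together with the hypothesis $\varepsilon(0) \leq \beta$; the aim is to show $T_\beta \geq L^4(0)\,\delta_*(\varepsilon(0),\beta)$, which yields the claim.

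On $[0, T_\beta)$ we have $\varepsilon(t) \leq \beta \leq \varepsilon_*(\omega)$, so the hypothesis of the last Lemma applies and
$$\frac{\mathrm{d}}{\mathrm{d}t}\int_\alpha k_s^2\,ds \leq -\frac{1}{16}\int_\alpha k_{s^3}^2\,ds - \frac{13}{6}\bar{k} \int_\alpha k_s^2\,ds \leq 0,$$
so that $\int_\alpha k_s^2\,ds$ is non-increasing, and — extending this by continuity to the closed interval — $\int_\alpha k_s^2\,ds \leq \int_{\alpha_0} k_s^2\,ds$ on $[0, T_\beta]$. Next I would invoke the preceding Proposition controlling $L$, whose hypothesis $\varepsilon(0)\le\beta$ (with this same $T_\beta$) is exactly what we have assumed; it gives on $[0, T_\beta]$ the bound
$$L^4(t) \leq L^4(0) + \left( \beta\, \hat{C}(\beta,\omega) + 32\,\omega^4\pi^4 \right) t.$$
Combining the two estimates,
$$\varepsilon(t) = L^3(t)\int_\alpha k_s^2\,ds \leq \frac{L^3(t)}{L^3(0)}\,\varepsilon(0) = \left( \frac{L^4(t)}{L^4(0)} \right)^{\frac{3}{4}}\varepsilon(0) \leq \left( 1 + \frac{\left( \beta\, \hat{C}(\beta,\omega) + 32\,\omega^4\pi^4 \right) t}{L^4(0)} \right)^{\frac{3}{4}}\varepsilon(0).$$

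The definition of $\delta_*$ is manufactured precisely so that this last expression does not exceed $\beta$: for $t \leq L^4(0)\,\delta_*(\varepsilon(0),\beta)$ one has $1 + \left( \beta \hat{C}(\beta,\omega) + 32\omega^4\pi^4 \right) t / L^4(0) \leq \left( \beta/\varepsilon(0) \right)^{4/3}$, hence $\varepsilon(t) \leq \beta$, and strictly so when $t$ lies below that endpoint. If one had $T_\beta < L^4(0)\,\delta_*(\varepsilon(0),\beta)$, then continuity of $\varepsilon$ would give $\varepsilon(T_\beta) = \beta$, whereas running the chain of estimates at $t = T_\beta$ gives $\varepsilon(T_\beta) < \beta$ — a contradiction. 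Hence $T_\beta \geq L^4(0)\,\delta_*(\varepsilon(0),\beta)$, which is the assertion. There is no genuine obstacle here; the only point demanding care is the bookkeeping of the continuity argument — namely checking that the monotonicity of $\int_\alpha k_s^2\,ds$ and the length bound of the Proposition, established a priori only on the open interval $[0, T_\beta)$, pass to the closed interval $[0, T_\beta]$, so that the strict-inequality contradiction at $T_\beta$ is legitimate, and that $\beta \in (0, \varepsilon_*(\omega)]$ is exactly what licenses the sign-definite differential inequality of the last Lemma.
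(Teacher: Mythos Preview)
Your proof is correct and follows exactly the continuity/bootstrap scheme of \cite{TW24}[Lemma 3.8], which is precisely what the paper invokes (it gives no independent proof, only the citation). The key ingredients --- monotonicity of $\int_\alpha k_s^2\,ds$ on $[0,T_\beta)$ from the preceding Lemma, the length control from the Proposition, and the algebraic design of $\delta_*$ --- are assembled just as intended; your explicit attention to extending the open-interval estimates to the closed interval by continuity is the only real bookkeeping and you handle it correctly. (Note the statement's $L^4(t)$ is a typo for $L^4(0)$, as your argument implicitly recognises.)
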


Now we introduce another scale-invariant quantity that decays under the flow \eqref{eq11} with $\lambda =0$. Specifically, define
$$\Gamma (t):=\frac{\int _{\alpha }k_{s}^{2}\,ds}{\left ( \int _{\alpha }k^{2}\,ds \right )^{3}}.$$

The same quantity was used in \cite{TW24}.  The proof of the next result is very similar to that in \cite{TW24} so we omit it, noting the only changes are use of the boundary conditions in `integration by parts', so the boundary terms are equal to zero, and the constants are slightly different in view of the differences of Propositions \ref{psw} and \ref{eq266} for curves with boundary as compared with closed curves.\\

\begin{lemma} Suppose $\alpha :\left [ -1,\,1 \right ]\times \left [ 0,\,\infty  \right )\to \mathbb{R}^{2}$ solves  \eqref{eq11} with $\lambda =0$ and $\varepsilon (0)\leq \varepsilon _{1}$.  
There exist $ \varepsilon _{1},c_{1},c_{2}> 0$ depending only $\omega $, such that if $T_{\varepsilon _{1}}:=\inf\left\{t\geq 0\,| \varepsilon (t)> \varepsilon _{1} \right\}\in \left ( 0,\,\infty  \right ]$  
then for $t\in \left [ 0,\,T_{\varepsilon _{1}} \right ),$
$$\Gamma (t)\leq \Gamma (0) \left ( 1+\frac{c_2}{L^{4}(0)}t \right )^{-c_{1}} \mbox{.}$$
\end{lemma}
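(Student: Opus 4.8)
The plan is to derive a differential inequality for $\Gamma(t)$ and then integrate it, exploiting the scale-invariance of $\Gamma$ and the already-established smallness of $\varepsilon(t)$ on $[0,T_{\varepsilon_1})$. First I would compute $\frac{d}{dt}\Gamma$ by the quotient rule, using Lemma~\ref{T:evlneqns}(iv) (with $F=-k_{ss}-\tfrac12 k^3$) for $\frac{d}{dt}\int_\alpha k^2\,ds$ and the evolution equation for $\int_\alpha k_s^2\,ds$ recorded above. Because both numerator and denominator carry explicit powers of derivatives of curvature, every term that appears will, after `integration by parts' (legitimate here since all odd curvature derivatives vanish on the boundary by Lemma~\ref{T:BCs}, so no boundary terms arise), be expressible as a $P^m_n(k)$-type quantity; the key structural fact — exactly as in \cite{TW24} — is that the leading negative contribution is a multiple of $\int_\alpha k_{s^3}^2\,ds\,/\,(\int_\alpha k^2\,ds)^3$ together with cross terms, and all remaining terms can be absorbed using Proposition~\ref{p} (Gagliardo--Nirenberg interpolation) together with the PSW inequalities, Propositions~\ref{psw} and~\ref{eq266}.

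Next I would show that, once $\varepsilon(0)\le\varepsilon_1$ with $\varepsilon_1$ chosen small enough (depending only on $\omega$), the bad terms are dominated so that
\[
\frac{d}{dt}\Gamma(t)\le -\,c\,\frac{\int_\alpha k_{s^3}^2\,ds}{\Big(\int_\alpha k^2\,ds\Big)^3}\cdot\frac{1}{\text{(something)}}\le 0,
\]
and, more quantitatively, that one can estimate $\frac{d}{dt}\Gamma\le -\tfrac{c_2}{L^4}\,c_1\,\Gamma$ in a form compatible with the claimed power-law decay. The scale-invariance of $\Gamma$ is what makes this possible: it removes the growing length $L(t)$ from the estimate except through the explicit factor $1/L^4$, and the growth of $L^4$ is controlled linearly in $t$ by the preceding Proposition (with constant $32\omega^4\pi^4$ plus a $\beta$-dependent error). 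Substituting the lower bound $L^4(t)\ge L^4(0)+(\text{positive})\,t$ — or more simply $L^4(t)\ge L^4(0)$ suffices to get the stated inequality after rewriting — and integrating the resulting separable differential inequality
\[
\frac{d}{dt}\log\Gamma(t)\le -\,\frac{c_1 c_2}{L^4(0)+c_2 t}
\]
yields $\Gamma(t)\le\Gamma(0)\big(1+\tfrac{c_2}{L^4(0)}t\big)^{-c_1}$ on $[0,T_{\varepsilon_1})$.

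The main obstacle, as usual in these fourth-order estimates, is bookkeeping: one must verify that every term produced by differentiating the quotient and integrating by parts genuinely has the right scaling weight so that it appears inside $\Gamma$ or $\varepsilon$ with a nonnegative power, and that the interpolation constants combine to give a clean smallness threshold $\varepsilon_1(\omega)$ and positive exponents $c_1,c_2$ depending only on $\omega$. In the closed-curve setting of \cite{TW24} this is done in detail; here the only genuine differences are (i) the boundary terms in each integration by parts, which vanish identically thanks to Lemma~\ref{T:BCs}, and (ii) the slightly different numerical constants in Propositions~\ref{psw} and~\ref{eq266} for curves with boundary (factors of $2$ in the $L^\infty$ bound for functions of nonzero mean). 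Since neither affects the structure of the argument, I would present the computation of $\frac{d}{dt}\Gamma$ and the absorption step, then simply remark that the integration and the extraction of $c_1,c_2,\varepsilon_1$ proceed verbatim as in \cite{TW24}[Lemma~3.9].
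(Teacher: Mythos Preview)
Your approach is correct and matches the paper's, which in fact omits the proof entirely and refers to \cite{TW24}, noting precisely the two differences you identify: the boundary terms in every integration by parts vanish by Lemma~\ref{T:BCs}, and the numerical constants in Propositions~\ref{psw} and~\ref{eq266} differ slightly from the closed-curve case.

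One small slip in your integration step: from $\frac{d}{dt}\log\Gamma\le -c'/L^4(t)$ you need the \emph{upper} bound $L^4(t)\le L^4(0)+c_2 t$ (with $c_2=\varepsilon_1\hat C(\varepsilon_1,\omega)+32\omega^4\pi^4$, as in the Remark following the Lemma), not the lower bound, so that $1/L^4(t)\ge 1/(L^4(0)+c_2 t)$ and hence $\log\Gamma(t)-\log\Gamma(0)\le -(c'/c_2)\log\bigl(1+c_2 t/L^4(0)\bigr)$; this gives the stated decay with $c_1=c'/c_2$. Your statement that ``$L^4(t)\ge L^4(0)$ suffices'' goes the wrong way.
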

\mbox{}\\

\begin{remark} In the above we may take explicitly $c_{1}=\frac{48\omega ^{4}\pi ^{4}}{5c_{2}}$ and $c_{2}=\varepsilon_{1} \hat{C}+32\omega ^{4} \pi ^{4}$.\\
\end{remark}

Together with the uniform lower length bound \eqref{E:llower}, the estimate on $\Gamma$ may be used iteratively, as in \cite{TW24} to establish a decaying bound $\varepsilon (t) $ for
all $t$.\\

\begin{corollary} There exists $ \varepsilon _{2}\in \left ( 0,\,\varepsilon _{1} \right )$,  depending only $\omega $, such that if  $\alpha :\left [ -1,\,1 \right ]\times \left [ 0,\,\infty  \right )\to \mathbb{R}^{2}$ has $\varepsilon (0)\leq \varepsilon _{1}$ and solves  \eqref{eq11} with $\lambda =0$ then for $t\in \left [ 0,\,\infty  \right ),$ we have $\varepsilon (t)\leq \varepsilon _{1}.$  Furthermore
$$\varepsilon  (t)\leq c_{3} \left ( 1+\frac{t}{L^{4}(0)} \right )^{-c_{1}},$$ where $c_{1},\,c_{3} > 0$  depend only on $\omega$. In particular, $ \varepsilon (t)\to 0$ as $t\to \infty .$\\
\end{corollary}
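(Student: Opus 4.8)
The plan is to bootstrap the decay estimate for $\Gamma$ into a decay estimate for $\varepsilon(t)=L^{3}(t)\int_{\alpha}k_{s}^{2}\,ds$, exploiting the fact that, as long as $\varepsilon(t)$ stays below $\varepsilon_{1}$, the scale-invariant quantities $\varepsilon(t)$ and $\Gamma(t)$ are comparable with a constant depending only on $\omega$. First I would record this comparison. Since $\int_{\alpha}k\,ds=2\pi\omega$ we have $\bar k=2\pi\omega/L$ and $\int_{\alpha}(k-\bar k)\,ds=0$, so Proposition \ref{psw} gives
\begin{equation*}
  \int_{\alpha}k^{2}\,ds=\int_{\alpha}(k-\bar k)^{2}\,ds+\frac{(2\pi\omega)^{2}}{L}\le \frac{L^{2}}{\pi^{2}}\int_{\alpha}k_{s}^{2}\,ds+\frac{(2\pi\omega)^{2}}{L},
\end{equation*}
hence $(2\pi\omega)^{2}\le L\int_{\alpha}k^{2}\,ds\le \pi^{-2}\varepsilon(t)+(2\pi\omega)^{2}$, the left inequality being just H\"older. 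As $\varepsilon(t)=\bigl(L\int_{\alpha}k^{2}\,ds\bigr)^{3}\Gamma(t)$, at any $t$ with $\varepsilon(t)\le\varepsilon_{1}$ these two bounds yield
\begin{equation*}
  (2\pi\omega)^{6}\,\Gamma(t)\le \varepsilon(t)\le \Bigl(\frac{\varepsilon_{1}}{\pi^{2}}+(2\pi\omega)^{2}\Bigr)^{3}\Gamma(t)=:M(\omega)\,\Gamma(t),
\end{equation*}
where $M(\omega)$ depends only on $\omega$ because $\varepsilon_{1}=\varepsilon_{1}(\omega)$; in particular $\Gamma(0)\le (2\pi\omega)^{-6}\varepsilon(0)$.

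Next I would run the standard continuity (bootstrap) argument on $T_{\varepsilon_{1}}=\inf\{t\ge 0:\varepsilon(t)>\varepsilon_{1}\}$. On $[0,T_{\varepsilon_{1}})$ the preceding lemma gives $\Gamma(t)\le \Gamma(0)\bigl(1+c_{2}t/L^{4}(0)\bigr)^{-c_{1}}$, so combining with the comparison above,
\begin{equation*}
  \varepsilon(t)\le M(\omega)\,\Gamma(0)\Bigl(1+\frac{c_{2}t}{L^{4}(0)}\Bigr)^{-c_{1}}\le \frac{M(\omega)}{(2\pi\omega)^{6}}\,\varepsilon(0),\qquad t\in[0,T_{\varepsilon_{1}}).
\end{equation*}
Now fix $\varepsilon_{2}=\varepsilon_{2}(\omega)\in(0,\varepsilon_{1})$ so small that $M(\omega)(2\pi\omega)^{-6}\varepsilon_{2}<\varepsilon_{1}$. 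If $\varepsilon(0)\le \varepsilon_{2}$, the last display forces $\varepsilon(t)<\varepsilon_{1}$ throughout $[0,T_{\varepsilon_{1}})$, which by continuity of $\varepsilon$ is incompatible with $T_{\varepsilon_{1}}<\infty$; hence $T_{\varepsilon_{1}}=\infty$ and $\varepsilon(t)\le \varepsilon_{1}$ for all $t\ge 0$. The bound then holds globally, and setting $c_{3}:=M(\omega)(2\pi\omega)^{-6}\varepsilon_{2}$ (and, if one wants the cleaner form $\bigl(1+t/L^{4}(0)\bigr)^{-c_{1}}$, absorbing the harmless factor involving $c_{2}$ into $c_{3}$, which still depends only on $\omega$) gives $\varepsilon(t)\le c_{3}\bigl(1+t/L^{4}(0)\bigr)^{-c_{1}}$ and so $\varepsilon(t)\to 0$. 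Following \cite{TW24} one may instead phrase this last step as iterating the $\Gamma$-estimate over a sequence of time intervals, using the uniform lower length bound \eqref{E:llower} to keep all constants uniform; the continuity argument simply packages that iteration.

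The only genuine subtlety is the bookkeeping of constants: one must check that $M(\omega)$ — and therefore $\varepsilon_{2}$ and $c_{3}$ — depends genuinely only on $\omega$ and not on the initial curve. This is exactly where it matters that $\varepsilon_{1},c_{1},c_{2}$ furnished by the $\Gamma$-estimate were already $\omega$-dependent only, and that the comparison between $\varepsilon$ and $\Gamma$ invokes nothing beyond the topological identity $\int_{\alpha}k\,ds=2\pi\omega$ and Proposition \ref{psw}. The remainder is the routine continuity/bootstrap mechanism, so I expect no real obstacle here.
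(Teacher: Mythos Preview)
Your argument is correct and actually somewhat cleaner than the route the paper gestures at. The paper does not give a proof; it simply says that ``together with the uniform lower length bound \eqref{E:llower}, the estimate on $\Gamma$ may be used iteratively, as in \cite{TW24}'', so the intended argument is the time-interval iteration scheme from \cite{TW24}, in which one alternates the short-time control lemma for $\varepsilon$ with the $\Gamma$-decay and uses the lower length bound to keep the successive starting times and lengths under control. Your approach sidesteps the iteration entirely by observing the algebraic identity $\varepsilon(t)=\bigl(L\int_\alpha k^2\,ds\bigr)^3\Gamma(t)$ and pinning down the factor $L\int_\alpha k^2\,ds$ between $(2\pi\omega)^2$ and $(2\pi\omega)^2+\pi^{-2}\varepsilon_1$ via H\"older and Proposition~\ref{psw}. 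This turns the $\Gamma$-decay directly into an $\varepsilon$-bound with $\omega$-only constants, and a single continuity/bootstrap step closes the argument. In particular you never need \eqref{E:llower}. The paper's route has the virtue of matching \cite{TW24} line by line; yours has the virtue of being shorter and of making transparent exactly why the constants depend only on $\omega$. Your handling of the $c_2$ factor (via $1+c_2 s\ge \min(1,c_2)(1+s)$) and your reading of the hypothesis as $\varepsilon(0)\le\varepsilon_2$ (the statement as printed appears to contain a typo) are both fine.
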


Standard interpolation arguments as per \cite{DKS02} but in the case with boundary provide the following.\\

\begin{lemma} 
 Under the flow \eqref{eq11}, with $\lambda = 0$. Then for all $\ell \in \mathbb{N},$ 
 $$\frac{\mathrm{d} }{\mathrm{d} t}\int _{\alpha }k_{s^{\ell}}^{2}\,ds+\int _{\alpha }k_{s^{\ell+2}}^{2}\,ds\leq C_{\ell}\left ( \int _{\alpha }k^{2}\,ds \right )^{2\ell+5}.$$
\end{lemma}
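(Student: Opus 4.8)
The plan is to run the now-standard interpolation scheme of \cite{DKS02}, adapted to curves with boundary. The starting point is the evolution equation for $\int_\alpha k_{s^\ell}^2\,ds$. Although Corollary~\ref{T:evlneqns2}(vi) is stated for $\lambda>0$, its derivation uses only the Frenet relations, repeated integration by parts, and Lemma~\ref{T:BCs} to kill every boundary term; hence it holds verbatim for the free flow on setting $\lambda=0$, giving
$$\frac{\mathrm{d}}{\mathrm{d}t}\,\frac12\int_\alpha k_{s^\ell}^2\,ds=-\int_\alpha k_{s^{\ell+2}}^2\,ds+\int_\alpha P_4^{2\ell+2}(k)\,ds+\int_\alpha P_6^{2\ell}(k)\,ds,$$
where, after the integrations by parts used to produce it, both polynomial terms contain derivatives of $k$ of order at most $\ell+1$.

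Next I would estimate the two polynomial integrals using Proposition~\ref{p} with its order parameter taken to be $\ell+2$. For $P_4^{2\ell+2}(k)$ one has $n=4$, $m=2\ell+2$, so $m+\tfrac{n}{2}=2\ell+4<2(\ell+2)+1$; the $\varepsilon$-version applies with $p=\tfrac{2\ell+3}{\ell+2}<2$, and a short computation gives $\tfrac{n-p}{2-p}=2\ell+5$ and $m+n-1=2\ell+5$, whence
$$\int_\alpha\bigl|P_4^{2\ell+2}(k)\bigr|\,ds\le\varepsilon\int_\alpha k_{s^{\ell+2}}^2\,ds+c\Bigl(\varepsilon^{-\frac{p}{2-p}}+1\Bigr)\Bigl(\int_\alpha k^2\,ds\Bigr)^{2\ell+5}.$$
For $P_6^{2\ell}(k)$ one has $n=6$, $m=2\ell$, $m+\tfrac{n}{2}=2\ell+3<2(\ell+2)+1$, $p=\tfrac{2\ell+2}{\ell+2}<2$, and again $\tfrac{n-p}{2-p}=2\ell+5=m+n-1$, yielding a bound of the same shape. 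The crucial feature is that both exponents delivered by the interpolation inequality collapse to the single value $2\ell+5$; this is exactly what makes the right-hand side of the lemma homogeneous in $\int_\alpha k^2\,ds$.

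Finally, fixing $\varepsilon=\tfrac14$ so that the two $\varepsilon\int_\alpha k_{s^{\ell+2}}^2\,ds$ contributions are absorbed by the $-\int_\alpha k_{s^{\ell+2}}^2\,ds$ term, one obtains
$$\frac{\mathrm{d}}{\mathrm{d}t}\,\frac12\int_\alpha k_{s^\ell}^2\,ds\le-\frac12\int_\alpha k_{s^{\ell+2}}^2\,ds+C_\ell\Bigl(\int_\alpha k^2\,ds\Bigr)^{2\ell+5},$$
and multiplying by $2$ and relabelling $C_\ell$ gives the claim. I do not expect a genuine obstacle here: the argument is the curves-with-boundary analogue of Lemma~3.1 and Theorem~3.3 of \cite{DKS02}, and the only points needing care are (i) checking that each boundary term in the integrations by parts vanishes --- this is precisely Lemma~\ref{T:BCs} (all odd arc-length derivatives of $k$ vanish at $\pm1$, and one also uses that $k_{s^\ell}=0$ there when $\ell$ is odd), and is the one place where the present setting genuinely differs from the closed-curve case --- and (ii) the bookkeeping verifying the order and degree hypotheses of Proposition~\ref{p} together with the arithmetic identity $\tfrac{n-p}{2-p}=m+n-1=2\ell+5$ for both polynomial types.
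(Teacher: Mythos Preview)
Your proposal is correct and is exactly the argument the paper has in mind: the authors do not give a proof but simply refer to ``standard interpolation arguments as per \cite{DKS02} but in the case with boundary'', and your write-up is the detailed execution of this---Corollary~\ref{T:evlneqns2}(vi) with $\lambda=0$, then Proposition~\ref{p} at order $\ell+2$, with the exponent arithmetic $\tfrac{n-p}{2-p}=m+n-1=2\ell+5$ matching the exponents the authors display in their proof of Proposition~\ref{bd} in Section~\ref{S:lf}. The only care points you flag (vanishing of boundary terms via Lemma~\ref{T:BCs}, and arranging the $P$-terms so that no factor carries more than $\ell+1$ derivatives before applying Proposition~\ref{p}) are precisely the boundary adaptations the paper alludes to.
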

Now we can show that the scale-invariant quantity $L(t)k(\cdot , t)$
approaches uniformly a constant. This means that after scaling
for length, the solution approaches a circular arc.\\

\begin{corollary}  \label{T:kdecay}
Suppose that  $\alpha :\left [ -1,\,1 \right ]\times \left [ 0,\,\infty  \right )\to \mathbb{R}^{2}$ \eqref{eq11} with $\lambda =0$ and $\varepsilon (0)\leq \varepsilon_2.$ Then, for some $C = C\left ( \omega, L(0) \right )>0$, $$\left\|L(t)k\left ( \cdot,\,t \right )-2\omega \pi  \right\|_{\infty }\leq C\left ( 1+t \right )^{-\frac{c_{1}}{2}},$$
and, moreover,
$$\left\|k^{3}\left ( \cdot,\,t  \right ) \right\|_{\infty }\leq C\left ( 1+t \right )^{-\frac{(3+2c_{1})}{4}}+\frac{8\omega ^{3}\pi ^{3}}{\left ( L^{4}(0)+(32\omega ^{4}\pi ^{4}-\varepsilon _{1}\hat{C}(\varepsilon _{1},\,\omega ))t \right )^{\frac{3}{4}}}.$$
\end{corollary}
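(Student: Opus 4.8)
The plan is to deduce both bounds from two facts already at our disposal: the polynomial decay of $\varepsilon(t)$ from the preceding Corollary, and the growing lower bound on $L(t)$ coming from the $L^4(t)$-control Proposition. For the first estimate I would use the constancy of the rotation number (Corollary~\ref{T:omega}), which gives $L\bar k = \int_\alpha k\,ds = 2\pi\omega$, so that $L(t)k(\cdot,t)-2\pi\omega = L(t)\bigl(k(\cdot,t)-\bar k(t)\bigr)$. Since $\int_\alpha(k-\bar k)\,ds=0$, the second inequality of Proposition~\ref{eq266} applied with $g=k-\bar k$ yields $\|k-\bar k\|_\infty^2\le\frac{2L}{\pi}\int_\alpha k_s^2\,ds$, hence
$$\bigl\|L(t)k(\cdot,t)-2\pi\omega\bigr\|_\infty^2 = L^2\|k-\bar k\|_\infty^2 \le \frac{2}{\pi}\,L^3\!\int_\alpha k_s^2\,ds = \frac{2}{\pi}\,\varepsilon(t).$$
Inserting $\varepsilon(t)\le c_3\bigl(1+t/L^4(0)\bigr)^{-c_1}$ from the preceding Corollary and absorbing the factor $\max\{1,L^4(0)\}^{c_1/2}$ into $C=C(\omega,L(0))$ gives the first claim.

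For the second estimate I would decompose $k=(k-\bar k)+\bar k$, expand the cube, and apply the triangle inequality:
$$\|k^3\|_\infty \le \|k-\bar k\|_\infty^3 + 3|\bar k|\,\|k-\bar k\|_\infty^2 + 3\bar k^2\,\|k-\bar k\|_\infty + |\bar k|^3.$$
The last term is purely geometric: $|\bar k| = 2\pi\omega/L$, so $|\bar k|^3 = 8\pi^3\omega^3/L^3$. Since the preceding Corollary guarantees $\varepsilon(t)\le\varepsilon_1$ for all $t$, we have $T_{\varepsilon_1}=\infty$, so the $L^4(t)$-control Proposition applies with $\beta=\varepsilon_1$ for all $t$, giving $L^4(t)\ge L^4(0)+\bigl(32\omega^4\pi^4-\varepsilon_1\hat C(\varepsilon_1,\omega)\bigr)t$ and hence $|\bar k|^3\le 8\pi^3\omega^3\bigl(L^4(0)+(32\omega^4\pi^4-\varepsilon_1\hat C(\varepsilon_1,\omega))t\bigr)^{-3/4}$, which is exactly the second term of the claimed bound.

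It remains to bound the three cross terms by $C(1+t)^{-(3+2c_1)/4}$. Here I would shrink $\varepsilon_2$ (equivalently take $\varepsilon_1$ small relative to $\omega$, using $\hat C(\varepsilon_1,\omega)\to 48\omega^2$ as $\varepsilon_1\to0$) so that $32\omega^4\pi^4-\varepsilon_1\hat C(\varepsilon_1,\omega)>0$; the lower bound on $L^4(t)$ above then upgrades to $L^3(t)\ge c_4(1+t)^{3/4}$ for some $c_4=c_4(\omega,L(0))>0$. Combining $\|k-\bar k\|_\infty^2\le\frac{2}{\pi}\,\varepsilon(t)/L^2$ (Proposition~\ref{eq266}) with $\varepsilon(t)\le C(1+t)^{-c_1}$ and $|\bar k|=2\pi\omega/L$, each cross term carries a factor $L^{-3}\le c_4^{-1}(1+t)^{-3/4}$; the slowest-decaying one,
$$3\bar k^2\|k-\bar k\|_\infty \le \frac{12\pi^2\omega^2}{L^3}\sqrt{\tfrac{2}{\pi}\varepsilon(t)} \le C\,(1+t)^{-3/4-c_1/2} = C\,(1+t)^{-(3+2c_1)/4},$$
fixes the rate, while $3|\bar k|\,\|k-\bar k\|_\infty^2\le C(1+t)^{-3/4-c_1}$ and $\|k-\bar k\|_\infty^3\le C(1+t)^{-3/4-3c_1/2}$ decay at least as fast. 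Summing the four contributions yields the second claim.

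The proof contains no genuinely hard step; it is a careful accounting of the scaling of each term. The one point requiring attention is the choice of threshold $\varepsilon_2$: it must be small enough not only for all the preceding results to apply, but also to force $32\omega^4\pi^4-\varepsilon_1\hat C(\varepsilon_1,\omega)>0$, which is precisely what promotes the uniform bound $L(t)\ge\underline L$ to the growing bound $L^3(t)\gtrsim(1+t)^{3/4}$ needed to extract the full rate $(1+t)^{-(3+2c_1)/4}$ from the cross terms rather than only $(1+t)^{-c_1/2}$.
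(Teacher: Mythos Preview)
Your proposal is correct and follows essentially the same route as the paper (which omits the proof, referring to \cite{TW24}): control $\|Lk-2\pi\omega\|_\infty = L\|k-\bar k\|_\infty$ via Proposition~\ref{eq266} and the decay of $\varepsilon(t)$, then for $\|k^3\|_\infty$ expand around $\bar k$, isolate $\bar k^3 = (2\pi\omega)^3/L^3$ using the lower bound on $L^4(t)$, and absorb the cross terms using the same two ingredients. One small clarification: the positivity $32\omega^4\pi^4 - \varepsilon_1\hat C(\varepsilon_1,\omega)>0$ is already implicit in the paper's choice of $\varepsilon_1$ (otherwise the stated bound would be vacuous), so you need not further shrink $\varepsilon_2$ for this purpose.
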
 
\mbox{}\\
Using now the curvature decay and bounds on $\int k_{s^{\ell}}^{2}\,ds$ it is standard to obtain decay of all curvature derivatives.\\

\begin{corollary}Let  $\alpha :\left [ -1,\,1 \right ]\times \left [ 0,\,\infty  \right )\to \mathbb{R}^{2}$ satisfy \eqref{eq11} with $\lambda =0$ and with $\alpha_0$ satisfying $\varepsilon (0)\leq \varepsilon_2.$ Then, for any integer $\ell \geq 1,$
$$\left\|k_{s^{\ell}} \right\|_{\infty }\leq C\left ( 1+t \right )^{-\frac{(\ell+1+c_{1})}{4}},$$
where $C$ depends on $\ell,\, \omega,\, L(0),\,$ and $\left\|k_{s^{2\ell+1}} \right\|_{2 }^{2}(0).$
\end{corollary}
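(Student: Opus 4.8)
The plan is to follow the strategy of \cite{DKS02} and \cite{TW24}[Lemma 3.11], with the generalised Neumann conditions \eqref{E:NBC1}--\eqref{E:NBC2} and Lemma \ref{T:BCs} taking over the role that periodicity plays for closed curves; I would therefore only sketch the argument. First I would record what is already in hand: from the preceding Corollary, $\varepsilon(t)=L^{3}\int_{\alpha}k_{s}^{2}\,ds\le c_{3}\bigl(1+t/L^{4}(0)\bigr)^{-c_{1}}$, and from the estimate on $L^{4}(t)$ one has $c_{0}(1+t)\le L^{4}(t)\le C_{0}(1+t)$ for all $t\ge 0$, so $\int_{\alpha}k_{s}^{2}\,ds\lesssim(1+t)^{-\frac34-c_{1}}$. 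Since $\int_{\alpha}k\,ds=2\pi\omega$ is constant and $\|L(t)\,k(\cdot,t)-2\pi\omega\|_{\infty}\to 0$ by Corollary \ref{T:kdecay} (whence the curve is eventually convex), $L(t)\int_{\alpha}k^{2}\,ds\to(2\pi\omega)^{2}$, so $\int_{\alpha}k^{2}\,ds$ is comparable to $L^{-1}$ and hence to $(1+t)^{-\frac14}$; in particular the forcing in the differential inequalities $\tfrac{\mathrm d}{\mathrm d t}\int_{\alpha}k_{s^{m}}^{2}\,ds+\int_{\alpha}k_{s^{m+2}}^{2}\,ds\le C_{m}(\int_{\alpha}k^{2}\,ds)^{2m+5}$ (the preceding Lemma) satisfies $C_{m}(\int_{\alpha}k^{2}\,ds)^{2m+5}\lesssim(1+t)^{-\frac{2m+5}{4}}$.

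Next I would upgrade this to polynomial $L^{2}$-decay of all curvature derivatives. By Lemma \ref{T:BCs} every odd curvature derivative vanishes at $\pm 1$, so $\int_{\alpha}k_{s^{2j}}\,ds=\bigl[k_{s^{2j-1}}\bigr]_{\partial\alpha}=0$ for $j\ge 1$; hence $g=k_{s^{m}}$ is admissible in Propositions \ref{psw} and \ref{eq266} for every $m\ge 1$, and repeated integration by parts produces no boundary terms, in particular
$$\int_{\alpha}k_{s^{m}}^{2}\,ds=\pm\int_{\alpha}k_{s}\,k_{s^{2m-1}}\,ds\le\Bigl(\int_{\alpha}k_{s}^{2}\,ds\Bigr)^{\frac12}\Bigl(\int_{\alpha}k_{s^{2m-1}}^{2}\,ds\Bigr)^{\frac12}.$$
Iterating Proposition \ref{psw} twice gives $\int_{\alpha}k_{s^{m}}^{2}\,ds\le\tfrac{L^{4}}{\pi^{4}}\int_{\alpha}k_{s^{m+2}}^{2}\,ds$ for $m\ge 1$, so the above differential inequalities become, with $\pi^{4}/L^{4}\ge c_{*}/(1+t)$,
$$\frac{\mathrm d}{\mathrm d t}\int_{\alpha}k_{s^{m}}^{2}\,ds+\frac{c_{*}}{1+t}\int_{\alpha}k_{s^{m}}^{2}\,ds\lesssim(1+t)^{-\frac{2m+5}{4}},$$
and an integrating-factor argument yields a first, crude, polynomial decay $\int_{\alpha}k_{s^{m}}^{2}\,ds\lesssim(1+t)^{-\rho_{m}}$ with $\rho_{m}>0$. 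Feeding this back into the displayed Cauchy--Schwarz estimate together with the faster rate $\int_{\alpha}k_{s}^{2}\,ds\lesssim(1+t)^{-\frac34-c_{1}}$, the rates improve; iterating the bootstrap (as in \cite{TW24}, \cite{DKS02}) one reaches, for $1\le m\le\ell+1$,
$$\int_{\alpha}k_{s^{m}}^{2}\,ds\le C\bigl(\ell,\omega,L(0),\|k_{s^{2\ell+1}}\|_{2}^{2}(0)\bigr)\,(1+t)^{-\frac{2m+1}{4}-\frac{c_{1}}{2}},$$
the datum $\|k_{s^{2\ell+1}}\|_{2}^{2}(0)$ entering through the Cauchy--Schwarz estimate at order $m=\ell+1$ and the uniform bounds analogous to Proposition \ref{bd}.

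Finally, applying Proposition \ref{eq266} to $g=k_{s^{\ell}}$ gives $\|k_{s^{\ell}}\|_{\infty}^{2}\le\tfrac{2L}{\pi}\int_{\alpha}k_{s^{\ell+1}}^{2}\,ds$, and inserting the previous bound at $m=\ell+1$ together with $L\simeq(1+t)^{1/4}$ gives $\|k_{s^{\ell}}\|_{\infty}^{2}\lesssim(1+t)^{\frac14-\frac{2\ell+3}{4}-\frac{c_{1}}{2}}=(1+t)^{-\frac{\ell+1}{2}-\frac{c_{1}}{2}}$, i.e.\ $\|k_{s^{\ell}}\|_{\infty}\le C(1+t)^{-\frac{\ell+1+c_{1}}{4}}$, as required. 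The hard part will be the bootstrap in the second stage: one must track the decay exponents carefully so that the rate at each level is governed by the fast forcing rather than the homogeneous rate $c_{*}$ (which is of order $\omega^{-4}$ and may be small), and one must check at each application of Propositions \ref{psw}, \ref{eq266} and of the integration by parts that the curvature derivative involved vanishes at $\pm 1$ or has vanishing average --- which Lemma \ref{T:BCs} guarantees. Up to the values of the constants, the argument is then identical to the closed-curve case treated in \cite{TW24} and \cite{DKS02}.
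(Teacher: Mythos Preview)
Your proposal is correct and follows essentially the same route as the paper: the paper gives no detailed proof but simply says ``Using now the curvature decay and bounds on $\int k_{s^{\ell}}^{2}\,ds$ it is standard to obtain decay of all curvature derivatives,'' deferring to \cite{TW24} and \cite{DKS02}, and your sketch is precisely an expansion of that standard argument, with the correct observation that Lemma \ref{T:BCs} makes Propositions \ref{psw} and \ref{eq266} applicable to $k_{s^{m}}$ for every $m\ge 1$ and kills all boundary terms in the repeated integrations by parts. Your identification of the bootstrap subtlety (that the homogeneous rate $c_{*}\sim\omega^{-4}$ coming from the Poincar\'e step may be small, so one must iterate the Cauchy--Schwarz estimate against the fast $\int_{\alpha}k_{s}^{2}\,ds$ decay to reach the exponent $-\tfrac{2m+1}{4}-\tfrac{c_{1}}{2}$) is exactly the point that distinguishes this from a one-line ODE argument, and matches what is done in \cite{TW24}.
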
 
Finally, to prove smooth convergence under rescaling to the circular arc, we need to control the rescaled embedding $\eta \left ( \cdot, t \right )=\displaystyle{\frac{1}{L(t)}\alpha \left ( \cdot, t \right )}$ and its derivatives.  We know already from Corollary \ref{T:kdecay} that $k^{(\eta)}(\cdot, t) \to 2\omega \pi $ pointwise. Moreover, using the length control of $\eta$,

$$\left\| k^{\eta}_{s^{\ell}} \right\|_{\infty} = L^{m+1} \left\| k_{s^{\ell}} \right\|_{\infty} \leq C (1 + t)^{-\frac{c_1}{4}} \to 0.$$

The boundary conditions ensure that the arc has centre at the cone tip.\\

The spatial and mixed derivatives of $\eta$ are now controlled using standard techniques, converting derivatives with respect to $s$ back to those with respect to $u \in [-1, 1]$ and using the existing estimates.  Convergence may be upgraded to full exponential convergence via a standard linearisation argument. This completes the proof of Theorem \ref{T:main3}.\hspace*{\fill}$\Box$






\backmatter

\bmhead{Acknowledgements}

The research of the first author was supported by a postgraduate scholarship from the Department of Mathematics, College of Science, Imam Abdulrahman Bin Faisal University, P. O. Box 1982, Dammam, Saudi Arabia.  Part of this research was completed while the second author was visiting the University of Science and Technology, China under a Chinese Academy of Sciences Presidents' International Fellowship Initiative visiting fellowship, grant number 2024PVA0042.  The authors are grateful for this support.

\end{document}